\newtheorem{definition}{Definition}[section]
\newtheorem{theorem}[definition]{Theorem}
\newtheorem{lemma}[definition]{Lemma}
\newtheorem{remark}[definition]{Remark}
\newtheorem{problem}[definition]{Problem}
\newtheorem{note}[definition]{Note}
\newtheorem{proposition}[definition]{Proposition}
\begin{document}
\title{\bf
The Attenuated Space Poset $\mathcal{A}_q(N, M)$
}
\author{
Wen Liu}
\date{}

\maketitle
\begin{abstract}
In this paper, we
 study the incidence algebra $T$ of the
attenuated space poset $\mathcal{A}_q(N, M)$.
We consider the following topics.
We consider some generators of $T$: the raising matrix $R$, the lowering matrix $L$, and a certain diagonal matrix $K$.
  We describe some relations among $R, L, K$. We put these relations in an attractive form using a certain matrix $S$
in $T$. We characterize
the center  $Z(T)$. Using  $Z(T)$, we relate $T$ to the quantum group  $U_\tau({\mathfrak{sl}}_2)$ with $\tau^2=q$.
We consider two elements $A, A^*$ in $T$ of a certain form. We find
necessary and sufficient conditions for $A, A^*$ to satisfy the tridiagonal relations.
Let $W$ denote an irreducible $T$-module. We find necessary and sufficient conditions for the above $A, A^*$
to act on $W$ as a Leonard pair.

\bigskip
\noindent
{\bf Keywords}.
Attenuated space;  tridiagonal relations;
distance-regular graph; Leonard pair
\hfil\break
\noindent {\bf 2010 Mathematics Subject Classification}.
Primary 05B25, 15A04, 17B37.
 \end{abstract}
\section{Introduction}
In \cite{ter5}, Terwilliger introduced the incidence algebra of  a uniform poset.  This algebra is motivated by the
Terwilliger algebra of a  $Q$-polynomial distance-regular graph. For many $Q$-polynomial distance-regular graphs, the Terwilliger algebra
  is related to a quantum group \cite{gaogaohou, gaozhanghou, itoter1, itoter2, tomiyama, woraw}.
So  for a uniform poset, it is natural to ask whether its  incidence algebra is related to a quantum group.
In this paper we will show that this is the case for  certain uniform posets.

We recall some examples of uniform posets.
In  \cite{ter5}, Terwilliger used the classical geometries to obtain eleven families of uniform posets.
The polar spaces give one of the families \cite[Example 3.1]{ter5}.
In \cite{woraw},   Worawannotai found
 another family of uniform posets using the polar spaces.
For each bipartite $Q$-polynomial distance-regular graph,
Miklavi$\check{c}$ and Terwilliger~\cite{miklavic} considered
 a poset on its vertex set. They found necessary and sufficient conditions for this poset to be  uniform.
In \cite{nakang},  Kang and Chen  obtained a family of uniform posets using the nonisotropic subspaces of  a  unitary polar space.

The incidence algebra $T$ of a uniform poset is finite-dimensional and semisimple \cite{ter5}.
In \cite{ter5}, Terwilliger  gave a method for computing the irreducible $T$-modules.
To describe these modules, it is convenient to use the notion of a Leonard pair.
This notion was  introduced by Terwilliger~\cite{ter1, ter2,  ter3} as an abstraction of some work of
Leonard \cite{leonard} concerning the orthogonal polynomials in the terminating branch of the
Askey scheme \cite{koekoek}.  Leonard pairs are closely related to quantum groups \cite{alnajjar, ter20151231, ter7+, ter9, ter8, ter16041, ter16042, tervid}.
Leonard pairs are also related to  $Q$-polynomial distance-regular graphs  \cite{lee, terzit, woraw}.  We mentioned a similarity
between the Terwilliger algebra for a $Q$-polynomial distance-regular graph and the incidence algebra for a uniform poset.
Given a uniform poset, it is natural to look for a Leonard pair structure on each irreducible $T$-module. These
Leonard pairs were found for certain examples \cite{nakang, ter6,  ter20151231}.

There is a family of classical geometries called the attenuated spaces.
An attenuated space admits the structure of a uniform poset  \cite{ter5}. Bonoli and Melone~\cite{Bonoli} gave a geometrical characterization of an attenuated space. Wang, Guo, Li \cite{wgl1, guoliwang} constructed an association scheme based on  an attenuated space. They computed all its intersection numbers and studied its incidence matrices. Kurihara
\cite{Kurihara} computed the character table of this association scheme.  Gao and Wang \cite{gaowang}
constructed some error-correcting codes based on an attenuated space.
 Liu and Wang \cite{LW1}
characterized the full automorphism group of some  graphs based on  an attenuated space.

An attenuated space gives a uniform poset called $\mathcal{A}_q(N, M)$.
In this paper we
 study the incidence algebra $T$ of $\mathcal{A}_q(N, M)$. We consider the following topics.
  We consider some generators of $T$: the raising matrix $R$, the lowering matrix $L$, and a certain diagonal matrix $K$.
  We describe some relations among $R, L, K$. We put these relations in an attractive form using a certain matrix $S$
in $T$. We characterize
the center  $Z(T)$. Using  $Z(T)$, we relate $T$ to the quantum group  $U_\tau({\mathfrak{sl}}_2)$ with $\tau^2=q$.
We consider two elements $A, A^*$ in $T$ of a certain form. We find
necessary and sufficient conditions for $A, A^*$ to satisfy the tridiagonal relations \cite{itotanater}.
Let $W$ denote an irreducible $T$-module. We find necessary and sufficient conditions for the above $A, A^*$
to act on $W$ as a Leonard pair.

The paper is organized  as follows:
In Section 2, we recall some basic definitions and facts about the attenuated space poset
 $\mathcal{A}_q(N,M)$ and
its incidence algebra $T$.
In Section 3
 we display some relations among $R, L, K$. In Section 4 we introduce the matrix $S$ and use it to simplify these relations.
In Section 5, we recall some facts from \cite{ter5} about the irreducible
$T$-modules and the center $Z(T)$.
In Sections 6, 7
we display two families of central elements  of $T$. We show that each family  generates $Z(T)$.
In Section 8 we relate $T$ to the quantum group  $U_\tau({\mathfrak{sl}}_2)$ with $\tau^2=q$.
In Section 9, we obtain some results about $R, L$ that will be used in Section 11.
In Section 10,
we recall some definitions and facts about Leonard pairs.
In Section 11 we consider   two elements $A, A^*$ in $T$ of a certain form. We find
necessary and sufficient conditions for $A, A^*$ to satisfy the tridiagonal relations.
In Section 12 we consider the actions of $A, A^*$ on the  irreducible $T$-modules.


\section{The  attenuated space poset and its incidence algebra}
In this section, we first recall some basic definitions
 and  facts about the
   attenuated space poset $\mathcal{A}_{q}(N,M)$.
We then recall    the incidence algebra $T$ of $\mathcal{A}_{q}(N,M)$.
This material
is mainly taken from \cite{ter6}.

Throughout this section fix positive integers
$N, M$. Fix a finite field $\mathbb{F}_{q}$ of order $q$.
We will be discussing the square root of $q$. Throughout the paper,
fix a square root $q^{1/2}$.
Let $H$ be a vector space over  $\mathbb{F}_{q}$ that has dimension $N+M$.
    Fix an $M$-dimensional subspace $h$  of $H$.
 Let $P$ denote the set of  subspaces
of $H$ whose intersection with $h$ is zero.
For $x,y\in P$ write  $x\leq y$ whenever $x\subseteq y$.
This relation $\leq$  is a partial order on $P$.
The poset $P$ is called the {\em attenuated space
poset} and often denoted by $\mathcal{A}_{q}(N,M)$.

 For $x,y\in P$ write $x<y$ whenever $x\leq y$ and $x\neq y$.
We say that $y$ {\em covers} $x$ whenever $x<y$
 and there is no $z\in P$ such that $x<z<y$.
For $0\leq i\leq N$, let $P_i$ denote the set of
 elements in $P$ that have dimension $i$.
The sequence $\{P_i\}^N_{i=0}$ is a grading of $P$
in the sense of \cite{ter5}.
 Let $\mathbb{C}P$ denote the vector space over
$\mathbb{C}$ consisting of all formal $\mathbb{C}$-linear
 combinations of elements in $P$. The set $P$
  is a basis for $\mathbb{C}P$,
  so the dimension of $\mathbb{C}P$ is equal to
   the cardinality of $P$.


The {\em lowering matrix} $L\in {\rm Mat}_P(\mathbb{C})$ and
 the {\em raising matrix} $R\in {\rm Mat}_P(\mathbb{C})$ have entries
\begin{equation}\label{lr}
L_{xy}= \left\{\begin{array} {cl}
1, & {\rm if}~y~{\rm covers}~x;\\
0, & {\rm if}~y~{\rm does~ not~cover}~x
\end{array}\right. \qquad\qquad
R_{xy}= \left\{\begin{array} {cl}
1, & {\rm if}~x~{\rm covers}~y;\\
0, & {\rm if}~x~{\rm does~ not~ cover}~y
\end{array}\right.
\end{equation}
for $x, y\in P$. Note that the transpose
 $R^t=L$.
For $0\leq i\leq N$ let $F_i$ denote
 the diagonal matrix in ${\rm Mat}_P(\mathbb{C})$ with  diagonal entries
\begin{equation}\label{fi}
(F_i)_{yy}= \left\{\begin{array} {cc}
1, & {\rm if}~~y\in P_i;\\
0, & {\rm if}~~y\notin P_i
\end{array}\right.
\qquad\qquad y\in P.
\end{equation}
We have
\begin{equation}\label{12151}
F_iF_j=\delta_{ij}F_i\qquad\qquad\qquad (0\leq i,j\leq N)
\end{equation}
and
\begin{equation}\label{12152}
I=\sum\limits_{i=0}^NF_i.
\end{equation}
We refer to $F_i$ as the  {\em $i$th projection matrix} for $P$. The
$\{F_i\}^N_{i=0}$ are related to the $R, L$ by
\begin{eqnarray}
& &RF_i=F_{i+1}R\qquad (0\leq i\leq N-1), \qquad  RF_N=0,\qquad F_0R=0,\label{12153}\\
& &LF_i=F_{i-1}L\qquad (1\leq i\leq N), \qquad\qquad  LF_0=0,\qquad ~F_NL=0. \label{12154}
\end{eqnarray}

Define
\begin{equation}\label{k}
K=\sum\limits_{i=0}^{N}q^{N+M-i}F_i.
\end{equation}
Note that $K$ is diagonal. Moreover, $K$ is invertible and
\begin{equation}\label{kinverse}
K^{-1}=\sum\limits_{i=0}^{N}q^{i-N-M}F_i.
\end{equation}
The {\em incidence algebra}  $T$ of $P$ is  the
 subalgebra of ${\rm Mat}_P(\mathbb{C})$
generated by $R, L, K^{\pm 1}$. The vector space $\mathbb{C}P$
is a $T$-module. By \cite[Theorem~2.5]{ter5} the algebra $T$
is semisimple.
Therefore  the $T$-module  $\mathbb{C}P$ is a direct sum of
irreducible $T$-submodules.

We mention two basic facts for later use.
\begin{lemma}\label{lem:firkfj}
For $0\leq i,j,k\leq N$,
\begin{eqnarray*}
& &F_iR^kF_j\neq 0\qquad\qquad{\rm iff}\qquad\qquad k=i-j,\label{firkf1}
\\
& &F_iL^kF_j\neq 0\qquad\qquad{\rm iff}\qquad\qquad k=j-i.\label{firkf2}
\end{eqnarray*}
\end{lemma}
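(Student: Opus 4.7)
The two assertions are transposes of each other via $R^{t}=L$, so I would focus on the $R$-statement and note that the $L$-statement follows by transposing (or by a symmetric argument from (2.6)).

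My first step is to iterate the commutation relations in (2.5). By induction on $k$, I would prove
\[
R^{k}F_{j}=\begin{cases} F_{j+k}R^{k} & \text{if } j+k\le N,\\ 0 & \text{if } j+k>N,\end{cases}
\]
where the base $k=0$ is trivial and the inductive step splits on whether $j+k=N$ (so that $RF_{j+k}=RF_{N}=0$) or $j+k<N$ (apply $RF_{j+k}=F_{j+k+1}R$). This step is purely formal manipulation with (2.5), and it immediately yields one direction: multiplying on the left by $F_{i}$ and using (2.3) gives
\[
F_{i}R^{k}F_{j}=\delta_{i,j+k}\,F_{i}R^{k}\qquad\text{(when $j+k\le N$)},
\]
and $0$ otherwise. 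In either case, if $F_{i}R^{k}F_{j}\ne 0$ then $k=i-j$.

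For the converse, I need to exhibit the nonvanishing of $F_{i}R^{i-j}F_{j}$ whenever $0\le j\le i\le N$. This is the step where the combinatorics of $\mathcal{A}_{q}(N,M)$ enters (as opposed to pure algebra). My plan is to produce a saturated chain inside $P$ of the required shape. Choose any $N$-dimensional subspace $z\subseteq H$ with $z\cap h=0$; such a $z$ exists because $\dim H=N+M$ and $\dim h=M$. Pick a full flag $0=z_{0}<z_{1}<\cdots <z_{N}=z$ inside $z$. Since any subspace of $z$ trivially satisfies the condition of intersecting $h$ in $0$, each $z_{m}$ lies in $P_{m}$, and each step $z_{m}<z_{m+1}$ is a cover in $P$. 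Hence in the subchain
\[
z_{j}<z_{j+1}<\cdots <z_{i},
\]
each consecutive pair contributes a factor $R_{z_{m+1},z_{m}}=1$, so $(R^{i-j})_{z_{i},z_{j}}\ge 1$, which implies $F_{i}R^{i-j}F_{j}\ne 0$.

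The $L$-statement follows by the same two-step argument using (2.6) in place of (2.5), or more slickly by taking transposes: $F_{i}L^{k}F_{j}=(F_{j}R^{k}F_{i})^{t}$, which is nonzero iff $F_{j}R^{k}F_{i}\ne 0$ iff $k=j-i$. The main (and really only) obstacle is the converse direction: the algebraic part is automatic from (2.5), (2.6), but showing that the product does not collapse to zero requires the geometric input that every level $P_{m}$ is nonempty and that full flags exist inside $P$. The construction above takes care of this uniformly for all admissible $i,j$.
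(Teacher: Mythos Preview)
Your argument is correct. The paper states this lemma without proof, treating it as a basic fact; your write-up supplies exactly the details one would expect: the algebraic direction follows by iterating (2.5) to get $R^{k}F_{j}=F_{j+k}R^{k}$ (or $0$), and the nonvanishing direction is handled by exhibiting a saturated chain inside a complement of $h$, which is the natural combinatorial input. The transpose reduction for the $L$-statement is also fine since each $F_{i}$ is diagonal (hence symmetric) and $R^{t}=L$.

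One very small remark: in the line $(R^{i-j})_{z_{i},z_{j}}\ge 1$ you are implicitly using that all entries of $R$ are nonnegative, so the matrix power has nonnegative entries and the single chain already forces positivity. This is obvious from the $0/1$ definition of $R$, but worth a half-sentence if you want the argument to read as airtight.
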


\begin{lemma}\label{lem:fbf}
For  $D\in {\rm Mat}_P(\mathbb{C})$ the following are equivalent:
\begin{enumerate}
\item[\rm(i)] $D=0$;
\item[\rm(ii)]  $F_iDF_j=0$ for $0\leq i, j\leq N$.
\end{enumerate}
\end{lemma}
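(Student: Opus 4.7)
The plan is to prove this lemma by a short direct argument that relies only on the resolution of the identity from equations (\ref{12151}) and (\ref{12152}). The implication (i) $\Rightarrow$ (ii) is immediate: if $D = 0$, then trivially $F_i D F_j = 0$ for every choice of $i, j$, since multiplying the zero matrix on either side yields zero.

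For the reverse implication (ii) $\Rightarrow$ (i), I would use the identity $I = \sum_{i=0}^N F_i$ established in (\ref{12152}). Writing $D = I D I$ and expanding, I obtain
\[
D \;=\; \Bigl(\sum_{i=0}^N F_i\Bigr) D \Bigl(\sum_{j=0}^N F_j\Bigr) \;=\; \sum_{i=0}^N \sum_{j=0}^N F_i D F_j.
\]
Under the hypothesis that each summand $F_i D F_j$ vanishes, the right-hand side is zero, so $D = 0$.

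There is essentially no obstacle to overcome here; the lemma is a standard decomposition statement saying that the two-sided decomposition by the orthogonal idempotents $\{F_i\}_{i=0}^N$ (which resolve the identity and are mutually orthogonal by (\ref{12151})) is faithful. The orthogonality relations in (\ref{12151}) are not even needed for the proof, only the completeness relation (\ref{12152}); orthogonality would be relevant if one wanted uniqueness of such a block decomposition. I would state the proof in the two bullets above and omit any further commentary, since the result will be invoked repeatedly in later sections to reduce matrix identities in $\mathrm{Mat}_P(\mathbb{C})$ to verifications on each $F_i \cdot F_j$ block.
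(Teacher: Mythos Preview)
Your proof is correct and matches the paper's own argument essentially verbatim: both directions are handled exactly as you describe, with the nontrivial implication obtained by writing $D = IDI = \sum_{i=0}^N\sum_{j=0}^N F_iDF_j$ via (\ref{12152}). Your additional remark that only the completeness relation (\ref{12152}) is needed, not the orthogonality (\ref{12151}), is accurate.
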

\begin{proof} ${\rm(i)}\Rightarrow  {\rm(ii)}$  Clear.

\noindent
${\rm(ii)}\Rightarrow  {\rm(i)}$ By (\ref{12152}),
$$
D=IDI=\sum\limits_{i=0}^N\sum\limits_{j=0}^N  F_iDF_j.
$$
The result follows.
\end{proof}

\section{How  $R,L,K$ are related}
In this section, we describe some relations among $R, L, K$.

\begin{lemma}\label{lem:relationrl}
The matrices $R, L, K$ satisfy the following relations:
\begin{equation}\label{relation:1}
RK=qKR,\qquad \qquad LK=q^{-1}KL,
\end{equation}
\begin{equation}\label{relation:2}
q(q+1)^{-1}RL^2-LRL+(q+1)^{-1}L^2R+LK=0,
\end{equation}
\begin{equation}\label{relation:3}
q(q+1)^{-1}R^2L-RLR+(q+1)^{-1}LR^2+KR=0.
\end{equation}
\end{lemma}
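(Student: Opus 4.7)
The plan is to handle (3.1) by a direct index-shift argument and to reduce (3.2), (3.3) to a finite case-analysis via Lemmas 2.2 and 2.3. For (3.1), expanding $K = \sum_{i=0}^N q^{N+M-i} F_i$ and applying $RF_i = F_{i+1}R$ from (2.6) gives $RK = \sum_i q^{N+M-i} F_{i+1}R$; reindexing the summation by $j = i+1$ then yields $RK = qKR$. The identity $LK = q^{-1}KL$ follows from the symmetric calculation using $LF_i = F_{i-1}L$ from (2.7).

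For (3.2), by Lemma 2.3 it suffices to prove $F_i \Psi F_j = 0$ for all $0 \le i, j \le N$, where $\Psi$ denotes the left-hand side. By (2.6), (2.7) and the fact that $K$ is diagonal with respect to the $F_\ell$, each of the four summands $RL^2$, $LRL$, $L^2R$, $LK$ sends $\mathbb{C}P_j$ into $\mathbb{C}P_{j-1}$, so Lemma 2.2 forces the only possibly nonzero block to be $i = j-1$. Thus (3.2) reduces to verifying, for each $1 \le j \le N$ and each pair $(x,y) \in P_{j-1} \times P_j$, the scalar identity obtained by summing the four matrix entries at position $(x,y)$ with the stated coefficients.

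Each of $(RL^2)_{x,y}$, $(LRL)_{x,y}$, $(L^2R)_{x,y}$ equals the number of flag configurations in $\mathcal{A}_q(N,M)$ of a specified shape through $x$ and $y$, while $(LK)_{x,y} = q^{N+M-j}$ if $x \subset y$ and zero otherwise. I would split the computation by whether $x \subset y$ or $\dim(x \cap y) = j-2$, the only two possibilities giving nonzero entries. Two standard attenuated-space counts suffice: for any $u \in P$ of dimension $\ell$, the number of one-dimensional subspaces in a given two-dimensional quotient equals $q+1$, and the number of covers of $u$ in $P$ equals $q^M\,(q^{N-\ell}-1)/(q-1)$ (since the $(N+M-\ell)$-dimensional quotient $H/u$ must avoid the $M$-dimensional image of $h$). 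Plugging these into the three enumeration formulas and combining with the $LK$ term, the weighted sum collapses to zero.

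The main technical obstacle will be the careful tracking of how the fixed subspace $h$ interacts with the various configurations, especially in the case $x \not\subset y$; this is the feature that distinguishes the attenuated space from the ordinary subspace lattice. Once (3.2) is established, (3.3) comes for free by transposition: since $R^t = L$ and $K^t = K$, transposing (3.2) sends $RL^2, LRL, L^2R, LK$ to $R^2L, RLR, LR^2, KR$ respectively, which is exactly (3.3).
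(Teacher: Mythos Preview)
Your treatment of (3.1) and (3.3) matches the paper's: (3.1) is a direct calculation (the paper phrases it as comparing $(x,y)$-entries, you phrase it as reindexing the sum $K=\sum_i q^{N+M-i}F_i$ through $RF_i=F_{i+1}R$, which is the same thing), and (3.3) is obtained from (3.2) by transposition in both accounts.

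Where you diverge is (3.2). The paper does not verify it directly; it simply invokes \cite[Theorem~3.2]{ter5}, the general structure theorem for uniform posets, of which $\mathcal A_q(N,M)$ is an instance. Your route is a self-contained entrywise verification: reduce via Lemma~2.3 to the single block $F_{j-1}\Psi F_j$, then evaluate the $(x,y)$-entry by counting flag configurations in the attenuated space. This is a legitimate alternative and has the virtue of not depending on the external reference. One point to sharpen: your two-case split ($x\subset y$ versus $\dim(x\cap y)=j-2$) will in practice need to become three cases, since when $x\not\subset y$ the sum $x+y$ has dimension $j+1$ but may lie either in $P_{j+1}$ or in $\tilde P_{j+1}$ (i.e.\ meet $h$ in a line), and the $(L^2R)_{x,y}$ count differs between these. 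You flag exactly this issue as the ``main technical obstacle,'' so you are aware of it; just be sure the final write-up treats the $\tilde P_{j+1}$ sub-case explicitly. The paper's citation is shorter, your argument is more transparent about why the specific coefficients $q(q+1)^{-1}$ and $(q+1)^{-1}$ arise from the combinatorics of $\mathcal A_q(N,M)$.
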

\begin{proof} To obtain (\ref{relation:1}), in each equation compare the
$(x,y)$-entries of each side for $x,y\in P$.
The equation (\ref{relation:2}) is obtained by \cite[Theorem~3.2]{ter5}. The equation (\ref{relation:3}) is obtained from (\ref{relation:2})  by applying the transpose to
each side.
\end{proof}

\begin{lemma}\label{lem:1603031}
We have
\begin{eqnarray}\label{201603031}
R^2LR=q^{-2}(q+1)R^2K+\frac{q(q^2-1)R^3L+(q-1)LR^3}{q^3-1},
\end{eqnarray}
\begin{eqnarray}\label{201603032}
RLR^2=q^{-2}(q+1)R^2K+\frac{q^2(q-1)R^3L+(q^2-1)LR^3}{q^3-1}.
\end{eqnarray}
Moreover,
\begin{eqnarray}\label{201603034}
L^2RL=(q+1)L^2K+\frac{q^2(q-1)RL^3+(q^2-1)L^3R}{q^3-1},
\end{eqnarray}
\begin{eqnarray}\label{201603033}
LRL^2=(q+1)L^2K+\frac{q(q^2-1)RL^3+(q-1)L^3R}{q^3-1}.
\end{eqnarray}
\end{lemma}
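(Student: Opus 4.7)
The plan is to derive the four identities by bootstrapping from the single relation (3.3) (and its transpose (3.2)) together with the commutation law $RK=qKR$.

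First, I would multiply (3.3) by $R$ on the left, obtaining
\[
q(q+1)^{-1}R^3L - R^2LR + (q+1)^{-1}RLR^2 + RKR = 0,
\]
and by $R$ on the right, obtaining
\[
q(q+1)^{-1}R^2LR - RLR^2 + (q+1)^{-1}LR^3 + KR^2 = 0.
\]
Using $RK=qKR$ from (3.1), I rewrite $RKR=q^{-1}R^2K$ and $KR^2=q^{-2}R^2K$. This yields a $2\times 2$ linear system whose unknowns are $R^2LR$ and $RLR^2$, and whose right-hand sides involve only $R^3L$, $LR^3$, and $R^2K$.

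Second, I would solve this system. Eliminating $R^2LR$, the coefficient of $RLR^2$ becomes
\[
1-q(q+1)^{-2}=\frac{q^2+q+1}{(q+1)^2},
\]
and collecting the $R^2K$ contributions uses the identity
\[
\frac{q^{-1}(q+1)^2+q^{-2}(q+1)}{q^2+q+1}=q^{-2}(q+1),
\]
which follows from $q(q+1)+1=q^2+q+1$. This yields
\[
RLR^2=q^{-2}(q+1)R^2K+\frac{q(q+1)R^3L+LR^3}{q^2+q+1}
\]
(after the symmetric elimination for $R^2LR$). Rewriting $q^2+q+1=(q^3-1)/(q-1)$ converts the fractions to the form stated in (\ref{201603031}) and (\ref{201603032}).

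Finally, for (\ref{201603034}) and (\ref{201603033}) I would take transposes of (\ref{201603031}) and (\ref{201603032}). Since $R^t=L$, $L^t=R$, and $K^t=K$ (as $K$ is diagonal), the transpose sends $R^2LR\mapsto LRL^2$, $RLR^2\mapsto L^2RL$, $R^3L\mapsto RL^3$, $LR^3\mapsto L^3R$, and $R^2K\mapsto KL^2$. Using $KL=qLK$ (from (3.1)) twice gives $KL^2=q^2L^2K$, so the coefficient $q^{-2}(q+1)$ becomes $(q+1)$. This produces the two identities involving $L^2RL$ and $LRL^2$ directly. The main obstacle is purely bookkeeping: verifying that the various coefficients of $R^2K$ (and then $L^2K$) telescope to the clean form $q^{-2}(q+1)$ and $(q+1)$; once the identity $q(q+1)+1=q^2+q+1$ is noticed, the rest is routine.
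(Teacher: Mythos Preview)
Your approach is essentially identical to the paper's: multiply (3.3) on the left and on the right by $R$, use $RK=qKR$ to express everything in terms of $R^2K$ (the paper keeps $KR^2$, but this is cosmetic), solve the resulting $2\times 2$ system for $R^2LR$ and $RLR^2$, and then transpose to obtain the two $L$-identities. One minor slip: the displayed formula you label as $RLR^2$, namely $q^{-2}(q+1)R^2K+\frac{q(q+1)R^3L+LR^3}{q^2+q+1}$, is actually the expression for $R^2LR$ (its numerator converts to $q(q^2-1)R^3L+(q-1)LR^3$ over $q^3-1$, matching (\ref{201603031})); the formula for $RLR^2$ has numerator $q^2R^3L+(q+1)LR^3$ over $q^2+q+1$.
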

\begin{proof}
In equation (\ref{relation:3}) multiply each term on the left by $R$
and simplify the result using (\ref{relation:1}) to obtain
\begin{eqnarray}
q(q+1)^{-1}R^3L-R^2LR+(q+1)^{-1}RLR^2+qKR^2=0.\label{relation:5}
\end{eqnarray}
In equation (\ref{relation:3}) multiply each term on the right by $R$ to obtain
\begin{eqnarray}
q(q+1)^{-1}R^2LR-RLR^2+(q+1)^{-1}LR^3+KR^2=0.\label{relation:6}
\end{eqnarray}
Combining (\ref{relation:5}), (\ref{relation:6}) we obtain (\ref{201603031}), (\ref{201603032}). Apply the transpose map to
(\ref{201603031}), (\ref{201603032}) to get (\ref{201603034}), (\ref{201603033}).
\end{proof}
For a nonzero $\tau\in \mathbb{C}$ such that $\tau^2\neq 1$, define
$$
[n]_\tau=\frac{\tau^n-\tau^{-n}}{\tau-\tau^{-1}},\qquad\qquad n=0,1,2,\ldots
$$
\begin{lemma}\label{lem:201601061}
The matrices $R, L$ satisfy the cubic $q^{1/2}$-Serre relations:
\begin{eqnarray*}
R^3L-[3]_{q^{1/2}}R^2LR+[3]_{q^{1/2}}RLR^2-LR^3=0,\label{relation:8++}\\
L^3R-[3]_{q^{1/2}}L^2RL+[3]_{q^{1/2}}LRL^2-RL^3=0. \label{relation:8+++}
\end{eqnarray*}
\end{lemma}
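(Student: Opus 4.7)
The plan is to derive the first cubic Serre relation directly by taking a linear combination of equations~(\ref{201603031}) and~(\ref{201603032}) from Lemma~\ref{lem:1603031}, and then obtain the second relation by transposition.

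First I would compute $[3]_{q^{1/2}}$ explicitly. From the definition,
$$[3]_{q^{1/2}}=\frac{q^{3/2}-q^{-3/2}}{q^{1/2}-q^{-1/2}}=\frac{q^3-1}{q(q-1)}=\frac{q^2+q+1}{q}.$$
This identifies $1/[3]_{q^{1/2}}=q(q-1)/(q^3-1)$, which is precisely the combination of the coefficients appearing in Lemma~\ref{lem:1603031}. So the key observation will be that $[3]_{q^{1/2}}$ arises naturally from the coefficients already present.

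Next I would subtract equation~(\ref{201603032}) from equation~(\ref{201603031}). The diagonal terms $q^{-2}(q+1)R^2K$ cancel exactly, leaving
$$R^2LR-RLR^2=\frac{[q(q^2-1)-q^2(q-1)]R^3L+[(q-1)-(q^2-1)]LR^3}{q^3-1}.$$
Factoring each coefficient gives $q(q^2-1)-q^2(q-1)=q(q-1)$ and $(q-1)-(q^2-1)=-q(q-1)$, so the right-hand side simplifies to $\frac{q(q-1)}{q^3-1}(R^3L-LR^3)$. Multiplying through by $[3]_{q^{1/2}}=(q^3-1)/(q(q-1))$ and rearranging gives exactly the first cubic $q^{1/2}$-Serre relation.

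For the second relation I would apply the transpose map to the first, using $R^t=L$ and $L^t=R$ (and noting that the scalar $[3]_{q^{1/2}}$ is fixed by the transpose). The transpose reverses the order of products, so $(R^3L)^t=L R^3$ with the roles of $R$ and $L$ swapped, and each intermediate term maps correctly onto the corresponding term of the $L$-Serre relation. No real obstacle is expected here; the computation is entirely mechanical once Lemma~\ref{lem:1603031} is available. The only thing to watch is bookkeeping of the coefficients to confirm the cancellation of the $R^2K$ terms and the exact appearance of $[3]_{q^{1/2}}$.
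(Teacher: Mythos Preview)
Your proof is correct and is essentially the same approach as the paper's: the paper's proof reads simply ``By Lemma~\ref{lem:1603031}'', and what you have written is precisely the detailed computation behind that citation---subtracting (\ref{201603032}) from (\ref{201603031}) to cancel the $R^2K$ term and produce the first Serre relation, then transposing (equivalently, using (\ref{201603034}) and (\ref{201603033})) for the second.
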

\begin{proof}
By Lemma~\ref{lem:1603031}.
\end{proof}
\begin{lemma}\label{lem:rlcommute}
The matrices $RL, LR, K, K^{-1}$  mutually commute.
\end{lemma}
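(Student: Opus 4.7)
The plan is to verify pairwise commutativity among the four elements $RL$, $LR$, $K$, $K^{-1}$. Since $K^{-1}$ is literally the inverse of $K$, it commutes with $K$, and once I know $K$ commutes with some matrix $X$, it follows that $K^{-1}$ does too. So the task reduces to three commutations: $[K,RL]=0$, $[K,LR]=0$, and $[RL,LR]=0$.

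The first two are immediate from (\ref{relation:1}). Using $RK=qKR$ and $LK=q^{-1}KL$, I compute
\begin{equation*}
RLK = R(q^{-1}KL) = q^{-1}(RK)L = q^{-1}(qKR)L = KRL,
\end{equation*}
and similarly $LRK = L(qKR) = q(LK)R = q(q^{-1}KL)R = KLR$. Thus $K$ (and hence $K^{-1}$) commutes with both $RL$ and $LR$.

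The main obstacle is showing $[RL,LR]=0$, which is equivalent to the identity $RL^2R = LR^2L$. Here I plan to exploit the symmetry between (\ref{relation:2}) and (\ref{relation:3}) by computing the same quantity $LRLR$ in two different ways. Solving (\ref{relation:2}) for $LRL$ and multiplying on the right by $R$ yields
\begin{equation*}
LRLR = \tfrac{q}{q+1}RL^2R + \tfrac{1}{q+1}L^2R^2 + LKR.
\end{equation*}
Solving (\ref{relation:3}) for $RLR$ and multiplying on the left by $L$ yields
\begin{equation*}
LRLR = \tfrac{q}{q+1}LR^2L + \tfrac{1}{q+1}L^2R^2 + LKR.
\end{equation*}
Equating the two right-hand sides and cancelling the common terms gives $\tfrac{q}{q+1}RL^2R = \tfrac{q}{q+1}LR^2L$, hence $RL^2R = LR^2L$ as desired. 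This is the cleanest route; I briefly considered instead applying Lemma \ref{lem:fbf} by showing that $F_i(RL^2R - LR^2L)F_j$ vanishes for all $i,j$, but that would require more bookkeeping and ultimately rely on the same underlying relations, so the direct algebraic manipulation above is preferable.
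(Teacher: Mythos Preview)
Your proof is correct and follows essentially the same approach as the paper: the paper's argument is simply the terse instruction ``use (\ref{relation:1})--(\ref{relation:3}),'' and your derivation of $RL^2R = LR^2L$ by computing $LRLR$ in two ways via (\ref{relation:2}) and (\ref{relation:3}) is a clean way to carry that out.
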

\begin{proof}
By (\ref{relation:1}), each of $RL, LR$ commutes with $K$. To see that $RL, LR$ commute, use (\ref{relation:1})--(\ref{relation:3}).
 \end{proof}

\section{The matrix $S$}
In this section, we define a  matrix $S\in{\rm Mat}_P(\mathbb{C})$ and we discuss how
$S$ is related to $R,L,K$.

\begin{definition}\label{def:piti}
For  $1\leq i\leq N+1$, denote
by $\tilde{P}_i$ the set of all
$i$-dimensional subspaces of $H$ whose intersection with $h$ has
dimension one.
\end{definition}

Define $S\in {\rm Mat}_P(\mathbb{C})$ as follows.
\begin{definition}\label{def:s}
For $x,y\in P$ we give the $(x,y)$-entry of $S$. Write $x\in P_i$ and $y\in P_j$ with $0\leq i, j\leq N$. Then
$$
S_{xy}= \left\{\begin{array} {cl}
1, & {\rm if}~~i=j~{\rm and}~x+y\in \tilde{P}_{i+1};\\
0, & {\rm  otherwise}.
\end{array}\right.
$$
\end{definition}

It is clear that $S^t=S$.
\begin{lemma}\label{lem:relationr2}
The matrix $S$ is related to $R, L, K$ as follows:
\begin{equation}\label{relation2:1}
S+LR-RL=\frac{K-q^{N+M}K^{-1}+(1-q^M)I}{q-1},
\end{equation}
\begin{equation}\label{relation2:2}
LS-qSL=(q^M-1)L,
\end{equation}
\begin{equation}\label{relation2:3}
SR-qRS=(q^M-1)R.
\end{equation}
\end{lemma}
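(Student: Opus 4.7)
The plan is to verify each equation entry-wise via Lemma~\ref{lem:fbf}, exploiting the block structure induced by the grading $\{F_i\}_{i=0}^N$. The operators $LR$, $RL$, $S$, $K^{\pm 1}$, $I$ all preserve each $\mathbb{C}P_i$, so for (\ref{relation2:1}) only the diagonal blocks $F_i(\cdot)F_i$ need checking; the operators $LS$, $SL$, $L$ all send $\mathbb{C}P_i$ to $\mathbb{C}P_{i-1}$, so for (\ref{relation2:2}) only the blocks $F_{i-1}(\cdot)F_i$ matter; and analogously $F_{i+1}(\cdot)F_i$ for (\ref{relation2:3}).

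For (\ref{relation2:1}) I would fix $x, y \in P_i$ and compute the four relevant entries directly from (\ref{lr}) and Definition~\ref{def:s}. By the covering relation, $(LR)_{xy}$ counts elements $z \in P_{i+1}$ with $z \supseteq x + y$ and $z \cap h = 0$, and $(RL)_{xy}$ counts $(i-1)$-dimensional subspaces of $x \cap y$. I would split on $\dim(x \cap y)$. When $x = y$, a count in the quotient $H/x$---in which the image of $h$ has dimension $M$ because $x \cap h = 0$---gives $(LR)_{xx} = (q^{N+M-i} - q^M)/(q - 1)$, while the number of hyperplanes of $x$ gives $(RL)_{xx} = (q^i - 1)/(q - 1)$, and $S_{xx} = 0$; these combine to the diagonal value of the right-hand side. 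For $x \ne y$ with $\dim(x + y) = i + 1$ (so $\dim(x \cap y) = i - 1$), one has $(RL)_{xy} = 1$; I would then observe, by working in the two-dimensional quotient $(x+y)/(x \cap y)$ and using $x \cap h = y \cap h = 0$, that $\dim((x+y) \cap h) \le 1$. Thus exactly one of $(LR)_{xy}$ (when the intersection is zero) and $S_{xy}$ (when the intersection is one-dimensional) equals $1$, and in each subcase $S + LR - RL$ vanishes. The case $\dim(x + y) > i + 1$ is trivial since all three entries are $0$.

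Rather than repeat such combinatorics for (\ref{relation2:2}), I would deduce it algebraically from (\ref{relation2:1}) together with Lemma~\ref{lem:relationrl}. Writing $X$ for the right-hand side of (\ref{relation2:1}), multiplying (\ref{relation2:1}) on the left by $L$, on the right by $L$, and subtracting $q$ times the second from the first yields
\begin{equation*}
LS - qSL = (LX - qXL) + \bigl(-L^2 R + (q+1)\, LRL - q\, RL^2\bigr).
\end{equation*}
Using $LK = q^{-1}KL$ and $LK^{-1} = qK^{-1}L$, the first parenthesis simplifies to $-(q+1)q^{-1}KL + (q^M - 1)L$; using (\ref{relation:2}) multiplied by $q + 1$, the second parenthesis equals $(q+1)q^{-1}KL$. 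The $KL$ terms cancel, leaving $(q^M - 1)L$. Finally, (\ref{relation2:3}) follows immediately by applying the transpose to (\ref{relation2:2}), since $L^t = R$, $R^t = L$, and $S^t = S$. The main obstacle is the combinatorial verification of (\ref{relation2:1}), particularly the geometric observation that $\dim((x + y) \cap h) \le 1$ whenever $\dim(x + y) = i + 1$; everything else reduces to elementary manipulations.
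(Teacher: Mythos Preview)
Your proposal is correct and follows essentially the same route as the paper: an entry-wise computation of $LR$, $RL$ (and $S$) on each $P_i$ to establish (\ref{relation2:1}); an algebraic derivation of (\ref{relation2:2}) from (\ref{relation2:1}) together with (\ref{relation:1}) and (\ref{relation:2}); and the transpose to obtain (\ref{relation2:3}). Your write-up is in fact more explicit than the paper's---you spell out the quotient-space count giving $(LR)_{xx}$ and the geometric reason $\dim((x+y)\cap h)\le 1$, and you display the cancellation of the $KL$ terms, whereas the paper simply tabulates the entries and writes ``combining (\ref{relation:1}), (\ref{relation:2}), (\ref{relation2:1}) we obtain (\ref{relation2:2}).''
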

\begin{proof}
We first obtain (\ref{relation2:1}).
To do this, for $x,y\in P$ compute the
$(x,y)$-entry of each term. We have

\medskip
\centerline{
\begin{tabular}[t]{c|c|l}
$(x,y)$-entry of $LR$ &$(x,y)$-entry of $RL$  & condition
   \\  \hline
 $(q-1)^{-1}(q^{N+M-i}-q^M)$ &$(q-1)^{-1}(q^i-1)$ &  $x=y\in P_i$
  \\
   $1$ & $1$ & $x, y\in P_i, \quad x+y\in P_{i+1}$
   \\
   $0$ & $1$ & $x, y\in P_i, \quad x+y\in \tilde{P}_{i+1}$
   \\
$0$ & $0$ & otherwise
 \\
 \end{tabular}}

 \medskip
\noindent
By the above arguments,

\medskip
\centerline{
\begin{tabular}[t]{c|l}
$(x,y)$-entry of $LR-RL$ & condition
   \\  \hline
 $(q-1)^{-1}(q^{N+M-i}-q^M-q^i+1)$ &  $x=y\in P_i$
  \\
 $-1$ & $x, y\in P_i, \quad  x+y\in \tilde{P}_{i+1}$
   \\
$0$ &  otherwise
 \\
 \end{tabular}}
 \medskip
\noindent
Equation (\ref{relation2:1}) follows from this along with (\ref{k}), (\ref{kinverse}) and Definition~\ref{def:s}.
Combining (\ref{relation:1}), (\ref{relation:2}), (\ref{relation2:1}) we obtain (\ref{relation2:2}).
 In (\ref{relation2:2}),  apply the transpose to each side and use $S^t=S, L^t=R$  to get  (\ref{relation2:3}).
\end{proof}

\begin{lemma}\label{lem:rlscommute}
The matrix $S$ commutes with each of $RL, LR, K, K^{-1}$.
\end{lemma}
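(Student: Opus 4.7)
The plan is to establish the four commutations in two batches, using the relations from Lemma~\ref{lem:relationr2}, namely
(\ref{relation2:1}), (\ref{relation2:2}), (\ref{relation2:3}), together with the previously established mutual commutations of $RL, LR, K, K^{\pm 1}$ (Lemma~\ref{lem:rlcommute}).

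For commutation with $K$ and $K^{-1}$, I would use (\ref{relation2:1}) to write
$$S \;=\; \frac{K - q^{N+M}K^{-1} + (1-q^M)I}{q-1} - LR + RL.$$
Every summand on the right commutes with $K$ and with $K^{-1}$: the first three are polynomials in $K^{\pm 1}$, while $RL$ and $LR$ commute with $K^{\pm 1}$ by Lemma~\ref{lem:rlcommute}. Hence $S$ commutes with $K$ and with $K^{-1}$.

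For commutation with $LR$, I would use the $q$-commutation (\ref{relation2:3}) in the form $RS = q^{-1}SR - q^{-1}(q^M-1)R$, and (\ref{relation2:2}) in the form $LS = qSL + (q^M-1)L$. Multiplying on the left by $L$ and right by $R$ respectively, and substituting, gives
$$LRS = q^{-1}LSR - q^{-1}(q^M-1)LR = SLR + q^{-1}(q^M-1)LR - q^{-1}(q^M-1)LR = SLR,$$
so $[S,LR]=0$. An essentially symmetric manipulation, starting from $SR = qRS + (q^M-1)R$ and $SL = q^{-1}LS - q^{-1}(q^M-1)L$, gives $SRL = RLS$, i.e. $[S,RL]=0$. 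Alternatively, once $[S,LR]=0$ and $[S,K^{\pm 1}]=0$ are known, commutation with $RL$ follows immediately from (\ref{relation2:1}), since that identity expresses $RL$ as $LR + S$ plus a polynomial in $K^{\pm 1}$, each summand of which already commutes with $S$.

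I do not anticipate a genuine obstacle here: the content of the argument is the careful bookkeeping in the two $q$-commutator identities, where the cross-terms produced by the right-hand sides of (\ref{relation2:2}) and (\ref{relation2:3}) must cancel exactly. The only thing to watch is the order in which one applies each relation so that the linear correction terms collapse; writing each manipulation as a substitution of the form $RS \mapsto q^{-1}SR + \cdots$ or $SL \mapsto q^{-1}LS + \cdots$ makes this cancellation transparent.
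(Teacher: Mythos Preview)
Your proposal is correct and follows essentially the same approach as the paper: use (\ref{relation2:1}) together with Lemma~\ref{lem:rlcommute} to get commutation with $K^{\pm 1}$, and combine (\ref{relation2:2}), (\ref{relation2:3}) to get commutation with $RL$ and $LR$. The paper states this tersely, whereas you have spelled out the cancellation of the linear correction terms explicitly; your alternative derivation of $[S,RL]=0$ from (\ref{relation2:1}) is also perfectly fine.
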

\begin{proof}
Combining Lemma~\ref{lem:rlcommute} and (\ref{relation2:1}), we find that
 $S$ commutes with $K, K^{-1}$. Combining
(\ref{relation2:2}), (\ref{relation2:3}) we find that $S$ commutes with $RL, LR$.
 \end{proof}

\section{The irreducible $T$-modules and the center $Z(T)$}
In this section, we recall some facts from \cite{ter5, ter6},  about the irreducible
$T$-modules and the center $Z(T)$.

By a  $T$-module we mean a $T$-submodule
of $\mathbb{C}P$.
Let $W$ be an irreducible $T$-module.
By the {\em endpoint} of $W$ we mean min$\{i|0\leq i\leq N, F_iW\neq 0\}$.
By the {\em diameter} of $W$ we mean $|\{i|0\leq i\leq N, F_iW\neq 0\}|-1$.

\begin{lemma}{\rm\cite[Theorems~2.5, 3.3]{ter5}}\label{lem:bounds}
For $0\leq r,d\leq N$, there exists an  irreducible $T$-module with endpoint $r$ and diameter $d$, if and only if
$$ N-2r\leq d\leq N-r,\qquad\qquad d\leq N+M-2r.$$

\end{lemma}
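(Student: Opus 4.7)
The statement is attributed to [ter5, Theorems 2.5, 3.3], so my plan is to translate Terwilliger's general classification of irreducible modules for incidence algebras of uniform posets into the concrete inequalities claimed here for $\mathcal{A}_q(N,M)$.

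First I would recall the parameter sequence of $\mathcal{A}_q(N,M)$ viewed as a uniform poset, as tabulated in [ter5]. The essential data are the cover counts up and down at each level $P_i$ together with the scalar parameters that appear in the uniform-poset axioms; for the attenuated space these are expressible through Gaussian $q$-binomial coefficients, with certain additional powers of $q$ reflecting the role of the fixed $M$-dimensional subspace $h$. These parameters determine the ranks of the compositions $L^{k}R^{k}$ and $R^{k}L^{k}$ acting on each $F_i(\mathbb{C}P)$, which is the key information needed to run the general classification.

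Next I would invoke [ter5, Theorem 3.3], which asserts that an irreducible $T$-module with endpoint $r$ and diameter $d$ exists if and only if a certain primitive subspace at level $r$ (namely vectors in $F_r(\mathbb{C}P)\cap \ker L$ that are killed by $R^{d+1}$ but not by $R^{d}$) is nonzero. For $\mathcal{A}_q(N,M)$ this criterion reduces, via the rank computations arising from the $q$-binomial chain counts above, to two nontrivial conditions: $N-2r \leq d$ (primitive vectors at $P_r$ survive at least $N-2r$ applications of $R$) and $d \leq N+M-2r$ (they are killed after at most $N+M-2r$ applications of $R$). The remaining bound $d\leq N-r$ is immediate from the fact that $F_i=0$ for $i>N$.

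The main obstacle I anticipate is the attenuation bound $d\leq N+M-2r$. In the degenerate case $M=0$ (the ordinary subspace lattice) this reduces to $d\leq N-2r$, matching the familiar upper bound for the projective geometry; the presence of the $M$-dimensional subspace $h$ enlarges the space of primitive vectors and correspondingly extends how far they can be raised before vanishing. I would handle this by a direct rank computation of the restriction of $R^{k}$ to $F_r(\mathbb{C}P)\cap \ker L$, using standard identities for $q$-binomial coefficients counting subspaces of $H$ that meet $h$ trivially. Once this rank computation is carried out, all three inequalities follow by substitution into the general existence criterion of [ter5, Theorem 3.3], and the converse direction (that every such $(r,d)$ actually supports an irreducible module) is obtained by exhibiting a nonzero primitive vector at level $r$ of the correct type, again via the explicit $q$-binomial dimension count.
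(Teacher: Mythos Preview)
The paper does not give a proof of this lemma at all: it is simply quoted from \cite[Theorems~2.5, 3.3]{ter5} and used as a black box. Your proposal is therefore not competing with any argument in the paper; rather, you are sketching how one would unpack the citation. That sketch is essentially the right one: specialize Terwilliger's general existence criterion for irreducible modules of a uniform poset to the parameter data of $\mathcal{A}_q(N,M)$, and read off the three inequalities. Your identification of the bound $d\le N-r$ as coming trivially from the grading, of $N-2r\le d$ as the minimal survival length of a primitive vector under $R$, and of $d\le N+M-2r$ as the attenuation cutoff, matches what the specialization yields. The only caveat is that you have stated the rank computations as things you \emph{would} do rather than carrying them out; to make the argument complete you would need to actually evaluate the relevant $q$-binomial counts (or quote the tabulated uniform-poset parameters for $\mathcal{A}_q(N,M)$ from \cite{ter5}) and check that the primitive-subspace dimension is strictly positive precisely on the claimed range. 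Once those numbers are in hand, the rest is substitution.
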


\begin{lemma}{\rm\cite[Theorem~2.5]{ter5}}\label{rd}
Let $W$ be an irreducible $T$-module with endpoint $r$
 and diameter $d$. Then the isomorphism class of $W$
  is determined by the sequence $(r,d)$.
\end{lemma}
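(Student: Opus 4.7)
The plan is to construct, from any irreducible $T$-module $W$ with endpoint $r$ and diameter $d$, a distinguished basis on which $R$, $L$, $K$ act by scalars depending only on $(r,d)$ and the fixed parameters $q, N, M$; this pins down the isomorphism class of $W$.

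First I fix $0 \neq v \in F_rW$. By the endpoint condition $F_{r-1}W = 0$, so (\ref{12154}) gives $Lv = 0$, and (\ref{k}) gives $Kv = q^{N+M-r}v$. Set $v_i = R^{i-r}v$ for $r \leq i \leq r+d$; then $v_i \in F_iW$ by (\ref{12153}), and $R v_{r+d} \in F_{r+d+1}W = 0$ by the diameter condition. I will establish inductively that $Lv_i = \beta_i v_{i-1}$ for explicit scalars $\beta_i = \beta_i(r,i,N,M,q)$. The base cases are $\beta_r = 0$ (from $Lv_r = 0$) and $\beta_{r+1}$, which is the scalar by which $LR$ acts on $F_rW$: in an irreducible module $LR$ cannot have two eigenvalues on $F_rW$ (else the eigenspace decomposition would split $W$ into proper submodules), so $\beta_{r+1}$ is well defined, and relation (\ref{relation2:1}) together with $RLv = 0$ and the known $K$-eigenvalue pins it down in terms of $r,N,M,q$ and the scalar by which $S$ acts on $F_rW$. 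An analogous use of Lemma~\ref{lem:rlscommute} and (\ref{relation2:3}) shows $S$ also acts as a scalar on $F_rW$, determined by $r,N,M,q$.

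For the inductive step, applying (\ref{relation:3}) to $v_{i-1}$ in the rearranged form
\[
RLR = q(q+1)^{-1}R^2L + (q+1)^{-1}LR^2 + KR,
\]
and using (\ref{relation:1}) to commute $K$ past powers of $R$, yields the linear recursion
\[
\beta_{i+1} = (q+1)\beta_i - q\beta_{i-1} - (q+1)\,q^{N+M-i}.
\]
So every $\beta_i$ is determined by $r,i,N,M,q$. The span of $\{v_r,\ldots,v_{r+d}\}$ is therefore closed under $R$, $L$, $K^{\pm 1}$; it is nonzero, hence equals $W$ by irreducibility, and each $v_i$ is nonzero (else $W$ would have diameter strictly less than $d$).

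Given a second irreducible $T$-module $W'$ with the same $(r,d)$, the same construction produces a basis $\{v'_i\}_{i=r}^{r+d}$ on which $R, L, K$ have identical matrix entries, so the linear extension of $v_i \mapsto v'_i$ is the required $T$-module isomorphism. The main obstacle is the base case: showing that $LR$ and $S$ each act as a single scalar on $F_rW$ and that these scalars depend only on $(r,N,M,q)$, rather than being free parameters of $W$. The cleanest route is to couple the scalars via (\ref{relation2:1}) (which on $F_rW$ reduces to $S + LR$ equals a known function of $K$), then invoke the mutual commutativity in Lemma~\ref{lem:rlscommute} together with irreducibility to force a single eigenvalue for each; the vanishing of $L$ on $F_rW$ then fixes that eigenvalue, unlocking the recursion above.
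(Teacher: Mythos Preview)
The paper does not actually prove this lemma; it cites \cite[Theorem~2.5]{ter5}. So there is no ``paper's proof'' to compare against, and your attempt stands on its own. The overall strategy---build a basis $v_i=R^{i-r}v$, show $Lv_i=\beta_i v_{i-1}$ for scalars determined by the module's invariants, then match bases between two modules with the same $(r,d)$---is the right one, and the recursion you extract from (\ref{relation:3}) is correct. But there is a genuine gap in the base case.

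You assert that the $S$-eigenvalue on $F_rW$ is ``determined by $r,N,M,q$'', and hence so is $\beta_{r+1}$. This is false: both depend on $d$ as well (compare Definition~\ref{def:notation}, where $x_{r+1}(r,d)$ visibly involves $d$). Your proposed justification does not survive scrutiny. Relation (\ref{relation2:1}) on $F_rW$ gives only $S+LR=(\text{known scalar})\cdot I$, a single linear relation between two unknowns; it cannot fix either. Relation (\ref{relation2:3}) tells you how the $S$-eigenvalue transforms under $R$, but imposes no constraint on its value at level $r$. And the sentence ``the eigenspace decomposition would split $W$ into proper submodules'' is not an argument: $LR$ is not central in $T$, so its eigenspaces on $F_rW$ need not generate proper $T$-submodules.

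What is missing is the constraint from the \emph{top} of the module. Apply (\ref{relation:3}) to $v_{r+d-1}$ and use $Rv_{r+d}=0$; this forces the formally continued recursion to satisfy $\beta_{r+d+1}=0$. Together with $\beta_r=0$, this single extra equation determines $\beta_{r+1}$ (and hence every $\beta_i$) as a function of $(r,d,N,M,q)$. The one-dimensionality of $F_rW$ then follows a posteriori: start from any $LR$-eigenvector $v\in F_rW$, run your recursion to see the span of $\{R^iv\}$ is $L$-stable and hence a $T$-submodule, and invoke irreducibility. With this repair the argument goes through; without it, two irreducible modules with the same $r$ but different $d$ would be indistinguishable by your method, which is absurd.
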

Let $Z(T)$ denote the center of $T$. We now give a basis for the vector space $Z(T)$.

Let $\Psi$ denote the set of isomorphism classes of
 irreducible $T$-modules. The elements of $\Psi$
  are called {\em types}.
 By Lemmas~\ref{lem:bounds}, \ref{rd}, we view
$$
\Psi=\{(r,d)\mid 0\leq r,d\leq N,\quad N-2r\leq d\leq N-r,\quad d\leq N+M-2r\}.
$$
For $\lambda=(r,d)\in \Psi$,
let $V_\lambda$ denote the subspace of $\mathbb{C}P$ spanned
by the irreducible $T$-modules of type $\lambda$.
Then $V_\lambda$ is a $T$-module,
and the sum $\mathbb{C}P=\sum_{\lambda\in \Psi}V_\lambda$ is  direct.
 For $\lambda\in \Psi$,
 define a linear map $e_\lambda: \mathbb{C}P\rightarrow \mathbb{C}P$  such that
 $(e_\lambda-I)V_\lambda=0$ and
$e_\lambda V_{\mu}=0$ for all $\mu\in \Psi$ with $\mu\neq\lambda$.
 According to the Wedderburn theory \cite{Curtis},
the elements $\{e_\lambda\}_{\lambda\in \Psi}$
form a basis for  $Z(T)$.

\begin{definition}{\rm\cite[line(16)]{ter6}}\label{def:notation}
\rm For $(r,d)\in \Psi$ define
\begin{eqnarray*}\label{xird}
x_{r+i}(r,d)=\frac{q^{N+M-r-d}(q^{i}-1)(q^{d+1-i}-1)}{(q-1)^2}\qquad\qquad\qquad (1\leq i\leq d).
\end{eqnarray*}
\end{definition}

\begin{note}\label{note:3}
{\rm Referring to Definition~\ref{def:notation}, $x_{r+i}(r,d)\neq 0$ for $1\leq i\leq d$.}
\end{note}

\begin{lemma}{\rm\cite[p.78]{ter6}\label{lem:irredbasis}}
Let $W$ denote an irreducible $T$-module with endpoint $r$ and diameter $d$.
Then there exists a basis  $\{w_i\}^d_{i=0}$ for $W$ such that
\begin{description}
  \item[\rm(i)]  $w_i\in F_{r+i}W\qquad\qquad (0\leq i\leq d)$,
  \item[\rm(ii)]  $Rw_i=w_{i+1}\qquad\qquad (0\leq i\leq d-1)$,  \qquad\qquad$Rw_d=0$,
\item[\rm(iii)] $Lw_i=x_{r+i}(r,d)w_{i-1}\qquad\qquad (1\leq i\leq d)$, \qquad\qquad$Lw_0=0$.
\end{description}
\end{lemma}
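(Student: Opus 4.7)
The plan is to first establish that $W$ is \emph{thin}, meaning $\dim F_{r+i} W = 1$ for each $0 \le i \le d$, and then to construct the basis by iterating $R$ on a nonzero vector of $F_r W$. Property (i) will be immediate from (\ref{12153}); property (ii) will follow because $F_{r+d+1}W = 0$ by the definition of diameter; and property (iii) will be obtained by deriving a recurrence for the scalars $\phi_i$ defined by $L w_i = \phi_i w_{i-1}$.

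For the thinness, I would appeal to the general theory of irreducible modules of the incidence algebra of a uniform poset developed in \cite{ter5}; it gives the grading $W = \bigoplus_{i=0}^{d} F_{r+i}W$ with one-dimensional summands. Taking this for granted, choose a nonzero $w_0 \in F_r W$ (unique up to scalar) and set $w_i := R^i w_0$ for $0 \le i \le d$. Since $R$ maps $F_{r+i-1}W$ into $F_{r+i}W$ and irreducibility of $W$ forces this map to be bijective for $1 \le i \le d$, every $w_i$ is nonzero; finally $R w_d \in F_{r+d+1}W = 0$, which completes (ii).

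For property (iii), since $L w_i \in F_{r+i-1}W$ is one-dimensional for $i \ge 1$, I can write $L w_i = \phi_i w_{i-1}$, with $\phi_0 = 0$ because $L w_0 \in F_{r-1}W = 0$. Applying relation (\ref{relation:3}) to $w_k$ and using $K w_{k+1} = q^{N+M-r-k-1} w_{k+1}$ produces the three-term recurrence
$$\phi_{k+2} = (q+1)\phi_{k+1} - q\phi_k - (q+1)q^{N+M-r-k-1} \qquad (0 \le k \le d-2),$$
while applying (\ref{relation:3}) to $w_{d-1}$ (where $R w_d = 0$ kills the $LR^2$ term) gives the extra boundary equation $(q+1)\phi_d = q\phi_{d-1} + (q+1)q^{N+M-r-d}$. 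Together with $\phi_0 = 0$ these constraints pin down the $\phi_i$ uniquely, and I would finish by substituting $\phi_i = x_{r+i}(r,d)$ from Definition~\ref{def:notation} into the recurrence and the boundary and verifying both, which is a routine calculation in $q$-integers.

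The main obstacle is the thinness assertion. Without appealing to \cite{ter5} one would have to argue directly that $\mathrm{span}\{w_0, \ldots, w_d\}$ is a $T$-submodule of $W$ and invoke irreducibility. The $R$- and $K^{\pm 1}$-invariance are immediate, but $L$-invariance requires an induction using (\ref{relation:3}), and the base case needs the commutation (\ref{relation2:1}) together with the fact from Lemma~\ref{lem:rlscommute} that $S$ preserves the grading. Once thinness is in hand, the construction of $\{w_i\}$ and the closed-form evaluation of $\phi_i$ are essentially mechanical.
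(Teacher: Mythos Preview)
The paper does not actually prove this lemma; it merely cites \cite[p.~78]{ter6}. Your reconstruction is correct and is essentially what that reference does: invoke the structure theory of irreducible modules for a uniform poset from \cite{ter5} to get thinness and the grading $W=\bigoplus_{i=0}^d F_{r+i}W$ with one-dimensional pieces, build the basis by iterating $R$, and then identify the lowering eigenvalues.

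Your derivation of the $\phi_i$ via the relation (\ref{relation:3}) is a clean addition. One small point worth making explicit for completeness: the three-term recurrence together with $\phi_0=0$ leaves a one-parameter family (governed by $\phi_1$), and you need the boundary equation at $k=d-1$ to actually cut this down. The coefficient of $\phi_1$ in that boundary equation works out to $(q^{d+1}-1)/(q-1)\neq 0$, so the system is indeed determined; stating this removes any doubt that ``these constraints pin down the $\phi_i$ uniquely.'' After that, the substitution check you describe is routine, and your argument stands.
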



\section{The central elements $\Phi, \Omega$}
In this section, we display two  elements $\Phi, \Omega$ in $Z(T)$.
Define
$$\Phi=\sum\limits_{\lambda=(r,d)\in \Psi}q^{r} e_\lambda,
\qquad\qquad \Omega=\sum\limits_{\lambda=(r,d)\in \Psi}q^{d/2} e_\lambda.$$
By construction $\Phi, \Omega$ are in $Z(T)$.
Note that $\Phi, \Omega$ are invertible, with
$$\Phi^{-1}=\sum\limits_{\lambda=(r,d)\in \Psi}q^{-r} e_\lambda,
\qquad\qquad \Omega^{-1}=\sum\limits_{\lambda=(r,d)\in \Psi}q^{-d/2} e_\lambda.$$

\begin{lemma}\label{rem:phiomeaction}
Let $W$ denote an irreducible $T$-module with endpoint $r$ and diameter $d$. Then on $W$,
$$
\Phi=q^{r}I,\qquad\qquad\qquad \Omega=q^{d/2}I.$$
\end{lemma}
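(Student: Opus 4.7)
The plan is to unpack the definitions of $\Phi$, $\Omega$, and $e_\lambda$ and observe that the claim follows immediately from how the central idempotents act on irreducible modules.

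First I would set $\lambda_0 = (r,d) \in \Psi$, so that $W$ is an irreducible $T$-module of type $\lambda_0$. By the definition of $V_{\lambda_0}$ as the sum of all irreducible $T$-submodules of type $\lambda_0$, we have $W \subseteq V_{\lambda_0}$. The defining properties of the central idempotent basis then give $(e_{\lambda_0} - I)W = 0$ and $e_\mu W = 0$ for every $\mu \in \Psi$ with $\mu \neq \lambda_0$. In other words, $e_{\lambda_0}$ acts on $W$ as $I$ while all other $e_\mu$ act on $W$ as $0$.

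Next I would substitute this into the defining sums
$$\Phi = \sum_{\lambda = (r',d') \in \Psi} q^{r'} e_\lambda, \qquad \Omega = \sum_{\lambda = (r',d') \in \Psi} q^{d'/2} e_\lambda.$$
Restricting each sum to its action on $W$ collapses it to a single term corresponding to $\lambda = \lambda_0 = (r,d)$, yielding $\Phi|_W = q^r I$ and $\Omega|_W = q^{d/2} I$, as required.

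There is no real obstacle here; the lemma is a direct consequence of the Wedderburn-theoretic description of $\{e_\lambda\}_{\lambda \in \Psi}$ recalled in Section 5. The only thing to be careful about is to verify that $W$ is contained in (a single) $V_{\lambda_0}$ rather than spread across several isotypic components, which is guaranteed by Lemma~\ref{rd} identifying the isomorphism class of $W$ with the single pair $(r,d)$.
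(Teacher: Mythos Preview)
Your proposal is correct and is essentially the same approach as the paper, which simply says ``By construction.'' You have just spelled out in detail what that phrase means in terms of the idempotents $e_\lambda$ acting on $W$.
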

\begin{proof}
By construction.
\end{proof}

\begin{proposition}\label{lem:centralgenerate}
The center $Z(T)$ is generated by $\Phi, \Omega$.
\end{proposition}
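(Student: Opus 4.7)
The plan is to show that each primitive central idempotent $e_\lambda$ can be written as a polynomial in $\Phi$ and $\Omega$; since $\{e_\lambda\}_{\lambda\in\Psi}$ is a basis of $Z(T)$, this will prove the proposition. The main computational tool is that, by Wedderburn theory, the $e_\lambda$ form a complete system of orthogonal idempotents: $e_\lambda e_\mu=\delta_{\lambda\mu}e_\lambda$ and $\sum_{\lambda\in\Psi}e_\lambda=I$. Combining this with Lemma~\ref{rem:phiomeaction}, for any polynomial $p(x,y)\in\mathbb{C}[x,y]$ one has
$$p(\Phi,\Omega)=\sum_{\lambda=(r,d)\in\Psi}p(q^r,q^{d/2})\,e_\lambda.$$
Thus, to realize a given $e_{(r_0,d_0)}$ as a polynomial in $\Phi,\Omega$, it suffices to exhibit $p$ taking value $1$ at $(q^{r_0},q^{d_0/2})$ and $0$ at every other point $(q^r,q^{d/2})$ with $(r,d)\in\Psi$.

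The key point is that these evaluation points are well separated. Since $q$ is the order of a finite field, $q\geq 2$, and hence $|q^{1/2}|=\sqrt{q}>1$ regardless of the choice of square root. It follows that both $r\mapsto q^r$ and $d\mapsto q^{d/2}$ are injective on $\{0,1,\ldots,N\}$. Let $R=\{r:(r,d)\in\Psi\text{ for some }d\}$ and $D=\{d:(r,d)\in\Psi\text{ for some }r\}$. I would then invoke single-variable Lagrange interpolation to define
$$f_{r_0}(x)=\prod_{r\in R\setminus\{r_0\}}\frac{x-q^r}{q^{r_0}-q^r},\qquad g_{d_0}(y)=\prod_{d\in D\setminus\{d_0\}}\frac{y-q^{d/2}}{q^{d_0/2}-q^{d/2}}.$$
The identity above then gives $f_{r_0}(\Phi)=\sum_{(r_0,d)\in\Psi}e_{(r_0,d)}$ and $g_{d_0}(\Omega)=\sum_{(r,d_0)\in\Psi}e_{(r,d_0)}$. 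Multiplying these two sums and using the orthogonality of the $e_\lambda$ yields $f_{r_0}(\Phi)\,g_{d_0}(\Omega)=e_{(r_0,d_0)}$, as desired.

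I do not foresee a serious obstacle. The argument is essentially two-variable Lagrange interpolation over a finite set of pairwise distinct points; the only subtlety is verifying that the map $(r,d)\mapsto(q^r,q^{d/2})$ separates the points of $\Psi$, which reduces immediately to $q\geq 2$ and the definition of $\Omega$.
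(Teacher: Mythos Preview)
Your proposal is correct and follows essentially the same approach as the paper: both arguments reduce to the observation that the map $(r,d)\mapsto(q^r,q^{d/2})$ separates the points of $\Psi$, and then conclude that each $e_\lambda$ lies in the subalgebra generated by $\Phi,\Omega$. The paper's proof merely states the separation property and invokes Lemma~\ref{rem:phiomeaction}, whereas you spell out the Lagrange interpolation explicitly; the underlying idea is the same.
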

\begin{proof}
Let $Z$ denote the subalgebra of $Z(T)$ generated by $\Phi, \Omega$. We show
$Z=Z(T)$. To do this,
it suffices to show that $e_\lambda\in Z$ for all $\lambda\in \Psi$.
Suppose we are given two distinct elements in $\Psi$, denoted by
$(r,d)$ and  $(r',d')$. Observe that $q^r\neq q^{r'}$ or  $q^{d/2}\neq q^{d'/2}$.
By this and Lemma~\ref{rem:phiomeaction}, we see that
 $e_\lambda\in Z$ for all $\lambda\in \Psi$.
\end{proof}


\section{The central elements $C_1, C_2$}
In this section, we display two  elements $C_1, C_2$ in $Z(T)$ and discuss their  combinatorial meaning.

\begin{definition}\label{def:central}
Define
\begin{eqnarray*}
& &C_1=q^{-1}(q-1)^{-1}(q+1)K+RL-q^{-1}LR,\\
& &C_2=K^2+(q-1)RLK-(q-1)LRK.
\end{eqnarray*}
\end{definition}
\begin{lemma}\label{lem:transpose}
We have $C^t_1=C_1$ and $C^t_2=C_2$.
\end{lemma}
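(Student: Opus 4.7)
The plan is to apply the transpose map to each of $C_1, C_2$ termwise, using the basic facts that $K$ is diagonal (so $K^t = K$) and $R^t = L$, $L^t = R$ (from the text just after (\ref{lr})). From $(AB)^t = B^tA^t$ one gets the two very useful identities
\[
(RL)^t = L^t R^t = RL, \qquad (LR)^t = R^t L^t = LR,
\]
so the monomials $RL$ and $LR$ are each symmetric.

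For $C_1$, this already finishes the job: transposing term by term yields
\[
C_1^t = q^{-1}(q-1)^{-1}(q+1)K^t + (RL)^t - q^{-1}(LR)^t = q^{-1}(q-1)^{-1}(q+1)K + RL - q^{-1}LR = C_1.
\]

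For $C_2$ the same strategy gives
\[
(RLK)^t = K^t L^t R^t = K\,RL, \qquad (LRK)^t = K^t R^t L^t = K\,LR,
\]
so $C_2^t = K^2 + (q-1)KRL - (q-1)KLR$. The only thing that is not immediately the desired expression is the order of the factors $K$ and $RL$ (resp.\ $K$ and $LR$). Here I invoke Lemma~\ref{lem:rlcommute}, which says that $RL, LR, K$ mutually commute; hence $KRL = RLK$ and $KLR = LRK$, and we recover $C_2^t = C_2$.

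There is essentially no obstacle: the only nonformal input is the commutativity of $K$ with $RL$ and $LR$, which is already established. The argument is a short direct computation once one records $R^t = L$, $K^t = K$, and applies Lemma~\ref{lem:rlcommute}.
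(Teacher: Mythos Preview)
Your proof is correct and follows essentially the same approach as the paper: the paper's proof is the one-line remark ``By $R^t=L$ along with Lemma~\ref{lem:rlcommute} and Definition~\ref{def:central},'' which is exactly the combination of ingredients you spell out in detail.
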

\begin{proof}
By $R^t=L$ along with Lemma~\ref{lem:rlcommute} and Definition~\ref{def:central}.
\end{proof}
\begin{lemma}\label{lem:c1c2}
The matrices $C_1$ and $C_2$ are  in  $Z(T)$.
\end{lemma}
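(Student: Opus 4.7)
The plan is to show that each of $C_1, C_2$ commutes with the generators $R$, $L$, and $K^{\pm 1}$ of $T$; this suffices because $T$ is generated by $R, L, K^{\pm 1}$.

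First I would dispense with $K$. Both $C_1$ and $C_2$ are polynomial expressions in $K$, $RL$, and $LR$ (with the scalar coefficients involving $q$). By Lemma~\ref{lem:rlcommute}, the matrices $RL, LR, K, K^{-1}$ mutually commute, so $K$ (and $K^{-1}$) commutes with $C_1$ and $C_2$ immediately.

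Next I would tackle commutation with $R$, which is the main computation. For $C_1$, expand $C_1R - RC_1$; the $KR - RK$ piece is handled by $RK = qKR$ from (\ref{relation:1}), and what remains is a linear combination of $RK$, $R^2L$, $RLR$, and $LR^2$. Multiplying the identity (\ref{relation:3}) through by $(q+1)$ yields $(q+1)RLR = qR^2L + LR^2 + (q+1)KR$, and substituting this into the remainder produces a clean cancellation, giving $C_1R - RC_1 = 0$. For $C_2$ the calculation is analogous but uses $K^2R = q^{-2}RK^2$ (a consequence of (\ref{relation:1})) together with the commutation of $K$ past $RL$ and $LR$; after rearranging, the combination $q(q+1)RLR - qLR^2 - q^2R^2L - (q+1)RK + q(q+1)KR$ appears, and invoking (\ref{relation:3}) once more makes it vanish.

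Finally, commutation with $L$ does not require a new calculation. By Lemma~\ref{lem:transpose} we have $C_1^t = C_1$ and $C_2^t = C_2$. Transposing the identities $C_iR = RC_i$ and using $R^t = L$ yields $L C_i = C_i L$ for $i = 1, 2$. Combined with the commutation with $R$ and with $K^{\pm 1}$ already established, this shows that $C_1, C_2 \in Z(T)$.

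The main obstacle will be organising the arithmetic cleanly in Step~2: the coefficients are tangled rational functions of $q$, and the identity from (\ref{relation:3}) must be applied at exactly the right moment. Writing the difference $C_iR - RC_i$ with every term brought to a common monomial basis $\{RK,\ R^2L,\ RLR,\ LR^2\}$ (respectively $\{RK^2, R^2LK, RLRK, LR^2K\}$ for $C_2$) before invoking (\ref{relation:3}) should keep the bookkeeping manageable.
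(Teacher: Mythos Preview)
Your proposal is correct and follows essentially the same approach as the paper: commute with $K$ via Lemma~\ref{lem:rlcommute}, commute with $R$ by direct computation using $RK=qKR$ and relation~(\ref{relation:3}), and obtain commutation with $L$ by transposing and invoking Lemma~\ref{lem:transpose} together with $R^t=L$. The paper's own proof is simply a terse summary of exactly this strategy (and for $C_2$ it merely says ``in a similar way''), so your outline is in fact more detailed than the original.
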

\begin{proof}
We first show that $C_1\in Z(T)$. To do this, we show that $C_1$ commutes with each of
$R, L, K$. To see that $C_1$ commutes with $R$, use $RK=qKR$ and (\ref{relation:3}).
To see that $C_1$ commutes with $L$, use $R^t=L$ and Lemma~\ref{lem:transpose}.
The matrix $C_1$ commutes with $K$ by Lemma~\ref{lem:rlcommute}. We have shown that $C_1\in Z(T)$.
 In a similar way, one can show that
$C_2\in Z(T)$.
\end{proof}

\begin{lemma}\label{lem:c1c2action}
Let $W$ denote an irreducible $T$-module  with endpoint $r$ and diameter $d$. Then on $W$,
$$
C_1=(q-1)^{-1}q^{N+M-r}(1+q^{-d-1})I,\quad\quad C_2=q^{2N+2M-2r-d}I.$$
\end{lemma}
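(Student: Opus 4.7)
The plan is to exploit the fact, established in Lemma~\ref{lem:c1c2}, that $C_1$ and $C_2$ lie in $Z(T)$. Since $T$ is a finite-dimensional semisimple algebra over the algebraically closed field $\mathbb{C}$ and $W$ is irreducible, Schur's lemma implies that every central element acts on $W$ as a scalar multiple of the identity. Thus it suffices to compute the scalar by which $C_1$ (respectively $C_2$) acts on a single convenient nonzero vector of $W$.

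The convenient vector is the "lowest" basis vector $w_0$ from Lemma~\ref{lem:irredbasis}. The relevant actions on $w_0$ are immediate from that lemma together with the definition of $K$: since $w_0\in F_r W$, we have $Kw_0 = q^{N+M-r}w_0$; by Lemma~\ref{lem:irredbasis}(iii), $Lw_0 = 0$; and by (ii) together with (iii), $LRw_0 = Lw_1 = x_{r+1}(r,d)\,w_0$, where
\[
x_{r+1}(r,d) = \frac{q^{N+M-r-d}(q^d-1)}{q-1}
\]
by Definition~\ref{def:notation}.

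Substituting these into Definition~\ref{def:central}, the $RL$-term in $C_1$ kills $w_0$, and the remaining two terms combine, after pulling out the factor $q^{N+M-r}/(q(q-1))$, into $q+q^{-d} = q(1+q^{-d-1})$; this yields the claimed scalar $(q-1)^{-1}q^{N+M-r}(1+q^{-d-1})$. For $C_2$ the $RLK$-term again vanishes on $w_0$, and $K^2 w_0 - (q-1)LRK w_0$ reduces to $q^{2(N+M-r)}(1 - q^{-d}(q^d-1))w_0 = q^{2(N+M-r)-d}w_0$, which is the asserted scalar.

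There is no genuine obstacle here: the argument is a direct evaluation. The only point requiring a moment's care is the appeal to Schur's lemma to reduce the problem from checking an operator identity on all of $W$ to checking a scalar identity on a single vector; once that reduction is in place, the computation on $w_0$ is a short algebraic simplification using the explicit value of $x_{r+1}(r,d)$.
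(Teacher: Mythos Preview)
Your proof is correct, but it takes a different route from the paper. The paper computes the action of $C_1$ and $C_2$ directly on \emph{every} basis vector $w_i$ ($0\le i\le d$) from Lemma~\ref{lem:irredbasis}, verifying by an explicit calculation that $C_1 w_i = (q-1)^{-1}q^{N+M-r}(1+q^{-d-1})w_i$ and $C_2 w_i = q^{2N+2M-2r-d}w_i$ for each $i$. You instead invoke the centrality of $C_1,C_2$ (Lemma~\ref{lem:c1c2}) together with Schur's lemma to conclude a priori that each acts as a scalar on $W$, and then read off that scalar from the single vector $w_0$. Your approach is more economical---evaluating on $w_0$ avoids the general $x_{r+i}(r,d)$ and requires only $x_{r+1}(r,d)$---while the paper's approach is more self-contained, since it does not appeal to Lemma~\ref{lem:c1c2} or to Schur's lemma. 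One small point: when $d=0$ there is no vector $w_1$, so the line $LRw_0=Lw_1$ should be replaced by $LRw_0=0$ (from $Rw_d=0$); your formulas still give the correct scalars in this case.
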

\begin{proof}
We consider the actions of $C_1$ and $C_2$ on the basis $\{w_i\}^d_{i=0}$ given
in Lemma~\ref{lem:irredbasis}.
By  the definition of $K$ along with  Definition~\ref{def:notation},
Lemma~\ref{lem:irredbasis}, Definition~\ref{def:central}, we find that
for $0\leq i\leq d$,
\begin{eqnarray}
C_1w_i&=&(q-1)^{-1}q^{N+M-r}(1+q^{-d-1})w_i,\label{c1action}
\\
C_2w_i&=&q^{2N+2M-2r-d}w_i.\label{c2action}
\end{eqnarray}
The result follows.
\end{proof}

\begin{proposition}\label{thm:main1}
The center $Z(T)$ is generated by $C_1$, $C_2$.
\end{proposition}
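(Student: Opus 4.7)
The plan is to reduce the claim to showing that distinct types $\lambda \in \Psi$ are separated by the pair of scalars $(c_1(\lambda), c_2(\lambda))$ by which $C_1, C_2$ act on $V_\lambda$. By Lemma~\ref{lem:c1c2} the unital subalgebra $Z \subseteq T$ generated by $C_1, C_2$ is contained in $Z(T)$. Since $\{e_\lambda\}_{\lambda \in \Psi}$ is a basis of $Z(T)$, it suffices to show $e_\lambda \in Z$ for every $\lambda \in \Psi$.

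First I would record, via Lemma~\ref{lem:c1c2action}, the scalars
$$
c_1(r,d) = (q-1)^{-1}q^{N+M-r}(1 + q^{-d-1}), \qquad c_2(r,d) = q^{2N+2M-2r-d},
$$
so that for any $p \in \mathbb{C}[x,y]$ the element $p(C_1, C_2)$ acts on $V_\lambda$ as $p(c_1(\lambda), c_2(\lambda))\,I$. Granting the injectivity of $\lambda \mapsto (c_1(\lambda), c_2(\lambda))$, a Lagrange-type interpolation finishes the job: for each $\mu \in \Psi$ with $\mu \neq \lambda$, pick an index $i \in \{1,2\}$ with $c_i(\mu) \neq c_i(\lambda)$; the factor $(c_i(\lambda) - c_i(\mu))^{-1}\bigl(C_i - c_i(\mu) I\bigr)$ acts as $0$ on $V_\mu$ and as $I$ on $V_\lambda$, and the product of these factors over $\mu \neq \lambda$ equals $e_\lambda$, placing $e_\lambda$ in $Z$.

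The main obstacle, mirroring the parallel step in Proposition~\ref{lem:centralgenerate}, is this injectivity claim. Suppose $(r,d), (r',d') \in \Psi$ produce the same pair of scalars. From the $c_2$ equation one extracts $s := 2r + d = 2r' + d'$, after which the $c_1$ equation reduces to
$$
q^{-r} + q^{r-s-1} = q^{-r'} + q^{r'-s-1}.
$$
The function $f(x) = q^{-x} + q^{x-s-1}$ is strictly convex in $x$ (since $q \geq 2$) and satisfies the symmetry $f(x) = f(s+1-x)$, so $f(r) = f(r')$ forces either $r = r'$, in which case $d = d'$ and we are done, or $r + r' = s + 1$. In the second case $r' = r + d + 1$, whence $d' = s - 2r' = -d - 2$, contradicting $d' \geq 0$ from the definition of $\Psi$. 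This closes the injectivity step and thus the proof.
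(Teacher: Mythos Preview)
Your proof is correct and follows essentially the same approach as the paper: reduce to showing that distinct types $(r,d)\in\Psi$ are separated by the eigenvalue pair $(c_1,c_2)$, then construct each $e_\lambda$ by interpolation. The only cosmetic difference is that you phrase the injectivity step via the strict convexity and symmetry of $f(x)=q^{-x}+q^{x-s-1}$, whereas the paper eliminates $q^{-d}$ algebraically to factor the relation as $(1-q^{r-r'})(q^{-r}-q^{-r'-d'-1})=0$; both lead to the same contradiction $d'<0$.
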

\begin{proof}
The proof is similar to that of Lemma~\ref{lem:centralgenerate}.
Let $Z$ denote the subalgebra of $Z(T)$ generated by $C_1$, $C_2$. We show that
$Z=Z(T)$.
To do this, it suffices to show that  $e_\lambda\in Z$ for all
$\lambda\in \Psi$.
Suppose we are given two distinct elements in $\Psi$, denoted by
$(r,d)$ and  $(r',d')$. We claim that
\begin{eqnarray*}
q^{-r}+q^{-r-d-1}&\neq &q^{-r'}+q^{-r'-d'-1}
\end{eqnarray*}
or
\begin{eqnarray*}
q^{-2r-d}&\neq &q^{-2r'-d'}.
\end{eqnarray*}
Suppose the claim is false. We have
\begin{eqnarray}
q^{-r}+q^{-r-d-1}&= &q^{-r'}+q^{-r'-d'-1},\label{rrdd1}\\
q^{-2r-d}&= &q^{-2r'-d'}.\label{rrdd2}
\end{eqnarray}
Suppose for the moment that $r=r'$. Then by (\ref{rrdd2}),
$q^{d-d'}=1$. But $q$ is not a root of unity, so $d=d'$ for
a contradiction. Therefore $r\neq r'$. Without loss of generality
we may assume $r<r'$. Eliminating $q^{-d}$ in (\ref{rrdd1}) using (\ref{rrdd2})
we find
$$
(1-q^{r-r'})(q^{-r}-q^{-r'-d'-1})=0.
$$
Note that $1-q^{r-r'}$ is nonzero, so  $q^{-r}=q^{-r'-d'-1}$.
Therefore $r=r'+d'+1$, so
\begin{equation}\label{hh1}
d'=r-r'-1.
\end{equation}
Now $d'<0$ for a contradiction. The claim is proved. By the claim
 and Lemma~\ref{lem:c1c2action}, $e_\lambda\in Z$ for all $\lambda\in \Psi$. The result follows.
\end{proof}

\begin{lemma}\label{lem:rlc1c2}
The following {\rm(i)}, {\rm(ii)} hold.
\begin{description}
  \item[\rm(i)] $RL=-q(q-1)^{-2}K+q(q-1)^{-1}C_1-(q-1)^{-2}C_2K^{-1}$,
  \item[\rm(ii)]  $LR=-(q-1)^{-2}K+q(q-1)^{-1}C_1-q(q-1)^{-2}C_2K^{-1}$.
\end{description}
\end{lemma}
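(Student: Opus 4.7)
The plan is to recognize that the defining equations
\[
C_1 = q^{-1}(q-1)^{-1}(q+1)K + RL - q^{-1}LR, \qquad C_2 = K^2 + (q-1)RLK - (q-1)LRK
\]
together form a linear system in the two unknowns $RL$ and $LR$, with coefficients built from the scalars $q^{\pm 1}$, $(q-1)^{\pm 1}$ and the invertible matrix $K$. By Lemma~\ref{lem:rlcommute}, the matrices $RL$, $LR$, $K$, $K^{-1}$ all commute pairwise, so rearranging and multiplying by $K^{-1}$ is unambiguous and produces no ordering issues. The result to be proved is exactly the unique solution of this $2\times 2$ system, so the proof reduces to a direct algebraic manipulation.

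Concretely, I would proceed as follows. First, isolate $RL-q^{-1}LR$ from the definition of $C_1$ to obtain
\[
RL-q^{-1}LR = C_1 - q^{-1}(q-1)^{-1}(q+1)K. \tag{A}
\]
Second, rewrite $C_2 = K^2+(q-1)(RL-LR)K$, multiply through by $K^{-1}$, and solve for $RL-LR$ to obtain
\[
RL-LR = (q-1)^{-1}C_2K^{-1}-(q-1)^{-1}K. \tag{B}
\]
Third, multiply (A) by $q$ and subtract (B), which eliminates $LR$ and yields $(q-1)RL$ on the left; dividing by $(q-1)$ gives formula (i) after simplifying the coefficient of $K$ (the arithmetic identity $-(q-1)^{-1}(q+1)+(q-1)^{-1}=-q(q-1)^{-1}$ is the one cancellation to check). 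Finally, substitute the expression for $RL$ from (i) into (B) and collect the $K$ and $C_2K^{-1}$ terms to obtain (ii); the coefficient arithmetic $(q-1)^{-1}-q(q-1)^{-2}=-(q-1)^{-2}$ and $(q-1)^{-2}+(q-1)^{-1}=q(q-1)^{-2}$ closes the computation.

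There is no real obstacle here, only bookkeeping: the only thing to watch is that all manipulations take place among matrices that commute with each other, which is guaranteed by Lemma~\ref{lem:rlcommute} (and by the fact that $K^{-1}$ exists, per equation~(\ref{kinverse})). Hence the identities hold as equalities in $T$, not merely on some distinguished subspace, so no appeal to irreducible $T$-modules or to Lemma~\ref{lem:c1c2action} is required. (As a sanity check, one could alternatively verify (i), (ii) by evaluating both sides on the basis $\{w_i\}_{i=0}^d$ of Lemma~\ref{lem:irredbasis} using Lemma~\ref{lem:c1c2action} and Definition~\ref{def:notation}, but this is more laborious and unnecessary.)
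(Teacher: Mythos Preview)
Your proposal is correct and follows exactly the paper's approach: the paper's proof is the single sentence ``Solve the two equations in Definition~\ref{def:central} for $RL, LR$,'' and what you have written is precisely that solution carried out in detail. The commutativity observation from Lemma~\ref{lem:rlcommute} that you invoke to justify multiplying by $K^{-1}$ is the only point of care, and you handle it correctly.
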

\begin{proof}
Solve the two equations in Definition~\ref{def:central} for $RL, LR$.
\end{proof}

We now discuss how  $C_1, C_2$ are related to $\Phi, \Omega$.
\begin{lemma}\label{lem:relation}
We have
\begin{eqnarray}
C_1&=&q^{N+M-1}\Phi^{-1}\Omega^{-1}\frac{q^{1/2}\Omega+q^{-1/2}\Omega^{-1}}{q^{1/2}-q^{-1/2}},\\
C_2&=&q^{2N+2M}\Phi^{-2}\Omega^{-2}.
\end{eqnarray}
\end{lemma}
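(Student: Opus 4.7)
The plan is to verify both identities by showing that the two sides agree on every irreducible $T$-module $W$, which suffices because both sides lie in $Z(T)$ and $\mathbb{C}P$ decomposes into a direct sum of irreducible $T$-modules (so any element of $Z(T)$ is determined by its scalar action on each isotypic component).

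First, I would observe that the left-hand sides $C_1, C_2$ lie in $Z(T)$ by Lemma~\ref{lem:c1c2}, and the right-hand sides are polynomials in $\Phi, \Omega$ and their inverses, which lie in $Z(T)$ by construction. So both sides are central, and by Wedderburn theory they act as scalars on each irreducible $T$-module. Thus it is enough to show, for every $(r,d) \in \Psi$ and every irreducible $T$-module $W$ with endpoint $r$ and diameter $d$, that both sides act on $W$ by the same scalar.

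Next, I would plug in the known scalar actions. By Lemma~\ref{rem:phiomeaction}, on $W$ we have $\Phi = q^r I$ and $\Omega = q^{d/2} I$, while Lemma~\ref{lem:c1c2action} gives $C_1 = (q-1)^{-1} q^{N+M-r}(1+q^{-d-1}) I$ and $C_2 = q^{2N+2M-2r-d} I$. For the second identity this is immediate: $q^{2N+2M}\Phi^{-2}\Omega^{-2} = q^{2N+2M-2r-d}$, matching the scalar for $C_2$. For the first identity, the scalar coming from the right-hand side is
\[
q^{N+M-1} \cdot q^{-r-d/2} \cdot \frac{q^{(d+1)/2} + q^{-(d+1)/2}}{q^{1/2} - q^{-1/2}},
\]
and after multiplying numerator and denominator by $q^{1/2}$ and simplifying, this becomes $(q-1)^{-1} q^{N+M-r}(1 + q^{-d-1})$, matching the $C_1$ scalar.

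The argument is essentially a scalar comparison, and there is no serious obstacle; the only thing to be careful about is the slightly awkward factor involving $q^{1/2}$ in the first identity, which must be manipulated correctly to reveal the clean form $(1+q^{-d-1})/(q-1)$. I would therefore present the verification as a direct computation on an arbitrary irreducible $T$-module, citing Lemmas~\ref{rem:phiomeaction} and \ref{lem:c1c2action}, and concluding by Lemma~\ref{lem:fbf} (or equivalently by the fact that $\mathbb{C}P$ is a faithful $T$-module) that the equalities hold in $T$.
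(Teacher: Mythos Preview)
Your proposal is correct and takes essentially the same approach as the paper, which simply says ``Combine Lemma~\ref{rem:phiomeaction} and Lemma~\ref{lem:c1c2action}.'' One small remark: the reference to Lemma~\ref{lem:fbf} at the end is not the right citation for passing from ``equal on every irreducible $T$-module'' to ``equal in $T$''; the faithfulness of $\mathbb{C}P$ (or equivalently the basis $\{e_\lambda\}_{\lambda\in\Psi}$ of $Z(T)$) is what you actually use, as you yourself note.
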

\begin{proof}
Combine Lemma~\ref{rem:phiomeaction} and Lemma~\ref{lem:c1c2action}.
\end{proof}

We now interpret our results so far in terms of augmented down-up algebras \cite{terwora}.
Fix a nonzero  $\tau\in \mathbb{C}$ which is not a root of unity .

\begin{definition}\rm\cite[Definition 2.3]{terwora}\label{terwora}
Let $s,t$ denote distinct integers. Let $\phi$ denote a Laurent polynomial over $\mathbb{C}$  in a variable $\vartheta$.
The {\em augmented down-up algebra} $\mathbb{A}_\tau(s,t,\phi)$ is the
 $\mathbb{C}$-algebra
with generators ${\cal K}^{\pm 1}, E, F, {\cal C}_s, {\cal C}_t$
and relations
$$
\begin{array}{l}
{\cal K}{\cal K}^{-1}={\cal K}^{-1}{\cal K}=1,\\
{\cal C}_s, {\cal C}_t~~{\rm are ~central},\\
{\cal K}E=\tau^2E{\cal K},\qquad ~~{\cal K}F=\tau^{-2}F{\cal K},\\
FE={\cal C}_s\tau^s{\cal K}^s+{\cal C}_t\tau^t{\cal K}^t+\phi(\tau{\cal K}),\\
EF={\cal C}_s\tau^{-s}{\cal K}^s+{\cal C}_t\tau^{-t}{\cal K}^t+\phi(\tau^{-1}{\cal K}).
\end{array}
$$
\end{definition}

\begin{lemma}\label{lem:augmented}
Let $\tau=q^{1/2}$. The vector space $\mathbb{C}P$ has an $\mathbb{A}_{\tau}(s,t,\phi)$-module structure on which
${\cal K}, E, F, {\cal C}_s, {\cal C}_t$ act as follows:

\medskip
\centerline{
\begin{tabular}[t]{c|c c c c c}
{\rm generator} &${\cal K}$ &$E$ & $F$ & ${\cal C}_s$ & ${\cal C}_t$
   \\  \hline
 {\rm action} & $K$ & $L$ & $R$ & $-\frac{\tau}{(\tau^2-1)^2}C_2$ &$\frac{\tau^2}{\tau^2-1}C_1$
  \\
 \end{tabular}}

 \noindent
Here $s=-1$, $t=0$, $\phi=-\tau(\tau^2-1)^{-2}\vartheta$.
\end{lemma}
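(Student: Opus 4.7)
The plan is to verify directly that the proposed assignment respects each defining relation of $\mathbb{A}_{\tau}(s,t,\phi)$ listed in Definition~\ref{terwora}, using the already-established identities for $R,L,K,C_1,C_2$ on $\mathbb{C}P$. With $\tau=q^{1/2}$, so $\tau^2=q$, and with $s=-1$, $t=0$, $\phi(\vartheta)=-\tau(\tau^2-1)^{-2}\vartheta$, most of the relations translate into statements we have already proved.

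First I would dispose of the easy relations. The relation $\mathcal{K}\mathcal{K}^{-1}=\mathcal{K}^{-1}\mathcal{K}=1$ follows from (\ref{kinverse}), since $K$ is invertible in $T$. The centrality of $\mathcal{C}_s\mapsto -\tau(\tau^2-1)^{-2}C_2$ and $\mathcal{C}_t\mapsto \tau^2(\tau^2-1)^{-1}C_1$ is immediate from Lemma~\ref{lem:c1c2}. For the commutations $\mathcal{K}E=\tau^2E\mathcal{K}$ and $\mathcal{K}F=\tau^{-2}F\mathcal{K}$, I substitute $E\mapsto L$, $F\mapsto R$ and rewrite them as $KL=qLK$ and $KR=q^{-1}RK$; both are equivalent to the two identities in (\ref{relation:1}) of Lemma~\ref{lem:relationrl}.

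The only substantive work lies in the two mixed relations
\[
FE=\mathcal{C}_s\tau^s\mathcal{K}^s+\mathcal{C}_t\tau^t\mathcal{K}^t+\phi(\tau\mathcal{K}),\qquad
EF=\mathcal{C}_s\tau^{-s}\mathcal{K}^s+\mathcal{C}_t\tau^{-t}\mathcal{K}^t+\phi(\tau^{-1}\mathcal{K}).
\]
Here I substitute the prescribed actions, insert $s=-1$, $t=0$, and expand $\phi$. After replacing $\tau^2$ by $q$, each right-hand side collapses to a $\mathbb{C}$-linear combination of $K$, $C_1$, and $C_2K^{-1}$. The first identity should yield the expression in Lemma~\ref{lem:rlc1c2}(i) for $RL$, and the second should yield the expression in Lemma~\ref{lem:rlc1c2}(ii) for $LR$; in each case matching the coefficients of $K$, $C_1$, and $C_2K^{-1}$ closes the argument.

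There is no conceptual obstacle here: the entire content of the lemma is that the specific constants $s=-1$, $t=0$, $\phi(\vartheta)=-\tau(\tau^2-1)^{-2}\vartheta$ have been engineered so that the two down-up relations exactly reproduce Lemma~\ref{lem:rlc1c2}. The only real care needed is bookkeeping — tracking the signs, the half-integer powers of $\tau$, and the difference between $\tau\mathcal{K}$ and $\tau^{-1}\mathcal{K}$ inside $\phi$ — so that the coefficients on the $C_2K^{-1}$ and $K$ terms come out correctly (asymmetric powers of $\tau$, but a common $\tau^2$ on the $C_1$ term).
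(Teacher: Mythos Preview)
Your proposal is correct and follows essentially the same route as the paper: the paper's proof simply cites (\ref{relation:1}), Lemma~\ref{lem:c1c2}, Lemma~\ref{lem:rlc1c2}, and Definition~\ref{terwora}, which is exactly the set of ingredients you invoke to verify each defining relation of $\mathbb{A}_\tau(s,t,\phi)$. Your write-up is more explicit about the bookkeeping, but the underlying argument is identical.
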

\begin{proof}
Combine (\ref{relation:1}),  Lemmas~\ref{lem:c1c2}, \ref{lem:rlc1c2} and Definition~\ref{terwora}.
\end{proof}
\begin{remark}\rm
The  $\mathbb{A}_\tau(s,t,\phi)$-module structure in Lemma~\ref{lem:augmented} is discussed further  in \cite{terwora}.
\end{remark}


\section{Some $U_\tau({\mathfrak{sl}}_2)$-module structures on $\mathbb{C}P$}
Throughout this section, fix a nonzero  $\tau\in \mathbb{C}$ that is not a root of unity.
In this section    we recall the quantum enveloping algebra
$U_\tau({\mathfrak{sl}}_2)$.  We then  display  some $U_\tau({\mathfrak{sl}}_2)$-module structures on $\mathbb{C}P$.

\begin{definition}\rm\cite{kassel}  \label{def:uqsl2}
Let $U_\tau({\mathfrak{sl}}_2)$ denote the
$\mathbb{C}$-algebra with generators $e,f, k^{\pm 1}$ and the following relations:
\begin{equation}\label{uqsr1}
kk^{-1}=k^{-1}k=1,
\end{equation}
\begin{equation}\label{uqsr2}
ke=\tau^2ek,\qquad \qquad  kf=\tau^{-2}fk,
\end{equation}
\begin{equation} \label{uqsr3}
ef-fe=\frac{k-k^{-1}}{\tau-\tau^{-1}}.
\end{equation}
We call $e,f, k^{\pm 1}$ the {\em Chevalley generators} for $U_\tau({\mathfrak{sl}}_2)$.
\end{definition}

\begin{theorem}\label{thm:main2}
Assume that $\tau=q^{1/2}$ and let $\Theta$ denote an invertible element in $Z(T)$. Then $\mathbb{C}P$ becomes a
$U_\tau({\mathfrak{sl}}_2)$-module on which $e, f, k$ act as follows:

\medskip
\centerline{
\begin{tabular}[t]{c|c c c}
{\rm generator}  & $e$ & $f$ & $k$
   \\  \hline
 {\rm action}  & $\Theta L$ & $q^{-N-M+\frac{1}{2}}\Theta^{-1}\Phi\Omega R$ &  $q^{-N-M}\Phi\Omega K$
  \\
 \end{tabular}}
\end{theorem}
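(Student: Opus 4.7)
The plan is to verify the three families of defining relations of $U_\tau(\mathfrak{sl}_2)$ from Definition~\ref{def:uqsl2} for the proposed action of $e, f, k$. Throughout, I freely use that $\Theta, \Phi, \Omega \in Z(T)$ and hence commute with every element of $T$. The relation $kk^{-1} = k^{-1}k = 1$ (see (\ref{uqsr1})) is immediate, since $\Phi, \Omega, K$ are all invertible and the proposed action of $k$ has inverse $q^{N+M}\Phi^{-1}\Omega^{-1}K^{-1}$.

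For the two commutation relations $ke = \tau^2 ek$ and $kf = \tau^{-2}fk$ in (\ref{uqsr2}), I would expand both sides, move the central factors $\Theta^{\pm 1}, \Phi, \Omega$ freely past one another, and apply $KL = qLK$ and $KR = q^{-1}RK$ from (\ref{relation:1}). In both cases the powers of $q$ line up to give precisely $\tau^{\pm 2} = q^{\pm 1}$, as required.

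The real content is the third relation $ef - fe = (k - k^{-1})/(\tau - \tau^{-1})$. Here the factor $\Theta$ in $e$ cancels against $\Theta^{-1}$ in $f$, leaving $ef - fe = q^{-N-M+1/2}\Phi\Omega(LR - RL)$. The key computational step is to subtract the two identities in Lemma~\ref{lem:rlc1c2}: the $C_1$ contributions cancel and one obtains $LR - RL = (q-1)^{-1}(K - C_2 K^{-1})$. Then Lemma~\ref{lem:relation} lets me replace $C_2$ with $q^{2N+2M}\Phi^{-2}\Omega^{-2}$, so the entire expression is a $\mathbb{C}$-linear combination of $\Phi\Omega K$ and $\Phi^{-1}\Omega^{-1}K^{-1}$. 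The final bookkeeping uses the identity $q - 1 = q^{1/2}(q^{1/2} - q^{-1/2}) = q^{1/2}(\tau - \tau^{-1})$, which converts the prefactor $q^{1/2}(q-1)^{-1}$ into $(\tau - \tau^{-1})^{-1}$ and yields $(k - k^{-1})/(\tau - \tau^{-1})$ on the nose.

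The main obstacle is precisely this last relation: one must observe that the $C_1$ terms in Lemma~\ref{lem:rlc1c2} cancel when forming $LR - RL$, identify $C_2$ correctly via Lemma~\ref{lem:relation}, and track the powers of $q$ carefully so that the prefactor $q^{-N-M+1/2}\Phi\Omega$ combines with $K - C_2 K^{-1}$ to produce exactly $(k - k^{-1})/(\tau - \tau^{-1})$. Once this bookkeeping is carried out, no further subtlety remains and the three relations of $U_\tau(\mathfrak{sl}_2)$ are all verified.
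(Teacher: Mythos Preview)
Your proposal is correct and follows essentially the same approach as the paper's proof: the paper verifies (\ref{uqsr1}) from the invertibility of $K$, obtains (\ref{uqsr2}) from (\ref{relation:1}), and obtains (\ref{uqsr3}) by combining Lemmas~\ref{lem:rlc1c2} and~\ref{lem:relation}, which is exactly the computation you carry out in detail.
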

\begin{proof}
Since $K$ is invertible,  (\ref{uqsr1}) holds. By (\ref{relation:1})  we obtain  (\ref{uqsr2}).
To obtain (\ref{uqsr3}), combine Lemmas~\ref{lem:rlc1c2}, \ref{lem:relation}.
\end{proof}


\section{Some results about $R, L$}
For the rest of this paper, assume that $N\geq 6$.
In this section, we obtain some results about $R, L$ that we will use later in the paper.

\begin{lemma}\label{lem:1602271} The following hold in the algebra $T$:
\begin{eqnarray}
& & LR^3F_0=(q-1)^{-2}q^M(q^3-1)(q^{N-2}-1)R^2F_0,\label{201602271}\\
& & L^3RF_2=(q-1)^{-2}q^M(q^3-1)(q^{N-2}-1)L^2F_2.\label{201602272}
\end{eqnarray}
\end{lemma}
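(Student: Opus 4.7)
The plan is to verify both identities by checking them on each irreducible $T$-submodule of $\mathbb{C}P$. Since $T \subseteq {\rm Mat}_P(\mathbb{C})$ acts faithfully on $\mathbb{C}P$ and $T$ is semisimple, $\mathbb{C}P$ decomposes as a direct sum of irreducible $T$-submodules, so an identity in $T$ holds if and only if it holds on each such submodule. Moreover, (\ref{201602272}) follows from (\ref{201602271}) by transposition: taking transposes of (\ref{201602271}) and using $R^t=L$, $L^t=R$, $F_i^t=F_i$ gives $F_0 L^3 R = c F_0 L^2$ with $c=(q-1)^{-2}q^M(q^3-1)(q^{N-2}-1)$, and repeated use of (\ref{12153}), (\ref{12154}) rewrites this as $L^3 R F_2 = c L^2 F_2$, which is (\ref{201602272}). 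So it suffices to prove (\ref{201602271}).

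Let $W$ be an irreducible $T$-module with endpoint $r$ and diameter $d$. If $r\geq 1$, then $F_0 W=0$ and both sides of (\ref{201602271}) act as zero on $W$. If $r=0$, then Lemma~\ref{lem:bounds} forces $N\leq d\leq N$, so $d=N$; the standing assumption $N\geq 6$ then gives $d\geq 3$. Take the basis $\{w_i\}_{i=0}^d$ from Lemma~\ref{lem:irredbasis}. Both sides of (\ref{201602271}) vanish on $w_i$ for $1\leq i\leq d$, since $F_0 w_i=0$, so one only compares the action on $w_0$. By Lemma~\ref{lem:irredbasis}(ii), $R^2 w_0=w_2$ and $R^3 w_0=w_3$; by part (iii), $Lw_3=x_3(0,N)w_2$. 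Substituting $r=0$, $d=N$, $i=3$ into Definition~\ref{def:notation} yields $x_3(0,N)=q^M(q^3-1)(q^{N-2}-1)/(q-1)^2$, matching the asserted coefficient.

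No serious obstacle is anticipated. The only subtlety is the role of the hypothesis $N\geq 6$, which is needed in the $r=0$ case precisely to ensure $d\geq 3$, so that $R^3 w_0=w_3$ is a nonzero basis element rather than collapsing to zero. Everything else is direct substitution into the formula of Definition~\ref{def:notation}.
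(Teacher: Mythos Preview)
Your proof is correct but takes a different route from the paper's. The paper proves (\ref{201602271}) by a direct matrix-entry computation: since both $LR^3F_0$ and $R^2F_0$ are supported on entries $(x,y)$ with $x\in P_2$ and $y$ the unique element of $P_0$, one computes those entries by counting in the poset and finds their ratio equals the asserted constant. You instead exploit the decomposition of $\mathbb{C}P$ into irreducible $T$-modules, checking the identity on each via the explicit action from Lemma~\ref{lem:irredbasis} and the formula for $x_{r+i}(r,d)$ in Definition~\ref{def:notation}. Both approaches are valid; yours avoids any combinatorial counting and makes transparent why the constant equals $x_3(0,N)$, while the paper's is more self-contained in that it does not invoke the module classification from Section~5. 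The transposition argument deriving (\ref{201602272}) from (\ref{201602271}) is the same in both proofs. A minor remark: your argument actually only needs $N\geq 3$ (so that $w_3$ exists when $r=0$, $d=N$), not the full $N\geq 6$, but since the latter is the standing assumption this is harmless.
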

\begin{proof}
We first obtain (\ref{201602271}). Let $y$ be the unique element in $P_0$. For  $x\in P_{2}$, we calculate  the $(x,y)$-entry of each matrix:

\bigskip
\centerline{
\begin{tabular}[t]{c| c}
$(x,y)$-entry of $R^2F_0$&$(x,y)$-entry of $LR^3F_0$
   \\  \hline
$q+1$ & $(q-1)^{-2}q^M(q+1)(q^3-1)(q^{N-2}-1)$
   \\
 \end{tabular}}

 \bigskip
 \noindent
Line (\ref{201602271}) follows.  Apply the transpose map to each side of (\ref{201602271}) to get (\ref{201602272}).
\end{proof}

\begin{lemma}\label{lem:rli31}
In the poset $\mathcal{A}_q(N,M)$,
\begin{description}
\item[\rm(i)] there exists  $x\in P_{N}$ and $y\in P_{N-2}$ such that $x+y\in \tilde{P}_{N+1}$,
\item[\rm(ii)] there exists  $x\in P_{N}$ and $y\in P_{N-2}$ such that $y<x.$
\end{description}
 \end{lemma}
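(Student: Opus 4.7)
Both claims admit direct constructive proofs. I will begin with a decomposition $H = x \oplus h$, where $x$ is any $N$-dimensional complement of $h$ in $H$; such an $x$ exists because $\dim H - \dim h = N$. By construction $x \cap h = 0$, so $x \in P_N$, and this $x$ will serve both statements.

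Part (ii) is essentially immediate from this setup: choose any $(N-2)$-dimensional subspace $y$ of $x$. Then $y \cap h \subseteq x \cap h = 0$, so $y \in P_{N-2}$, and $y < x$ is automatic.

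For part (i), the plan is to perturb a codimension-three subspace of $x$ by a vector carrying a nontrivial $h$-component, so that the sum with $x$ acquires exactly one dimension of overlap with $h$ while $y$ itself stays disjoint from $h$. Explicitly, I would fix $y_0 \subseteq x$ with $\dim y_0 = N - 3$, choose $v_1 \in x \setminus y_0$ and a nonzero $v_2 \in h$, and set
$$
y \;=\; y_0 + \langle v_1 + v_2 \rangle.
$$
Because $y_0 \subseteq x$ and $v_1 \in x$, we have $x + y = x + \langle v_2 \rangle$, so $\dim(x+y) = N+1$ and $(x+y) \cap h = \langle v_2 \rangle$ is one-dimensional, placing $x+y$ in $\tilde P_{N+1}$.

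The main (and essentially only) obstacle is to verify that this $y$ really lies in $P_{N-2}$, i.e.\ that $y \cap h = 0$ and $\dim y = N - 2$. Both facts will follow quickly from the unique decomposition $H = x \oplus h$: if $w_0 + \alpha(v_1 + v_2)$ lies in $h$ with $w_0 \in y_0$ and $\alpha \in \mathbb{F}_q$, then comparing the $x$-components forces $w_0 + \alpha v_1 = 0$, which combined with $v_1 \notin y_0$ gives $\alpha = 0$ and then $w_0 = 0$. For the dimension count, $v_1 + v_2 \notin y_0$ because otherwise $v_2 = (v_1 + v_2) - v_1 \in x \cap h = 0$, contradicting $v_2 \neq 0$; hence $\dim y = \dim y_0 + 1 = N - 2$.
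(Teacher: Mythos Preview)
Your proof is correct. The paper itself gives no argument beyond ``By the arguments about $\mathcal{A}_q(N,M)$ given in Section~2,'' so your explicit constructions---taking $x$ a complement of $h$, any codimension-two subspace $y\subseteq x$ for (ii), and the perturbed subspace $y=y_0+\langle v_1+v_2\rangle$ for (i)---supply exactly the details the paper leaves implicit, using only the direct-sum decomposition $H=x\oplus h$ and the standing hypotheses $N\geq 6$, $M\geq 1$.
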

\begin{proof}
By the arguments about $\mathcal{A}_q(N,M)$ given in Section 2.
\end{proof}

\begin{lemma}\label{lem:rli}
For $1\leq i\leq N-3$,
\begin{description}
\item[\rm(i)] there exists  $x\in P_{i+2}$ and  $y\in P_i$ such that $x+y\in P_{i+3}$,
\item[\rm(ii)] there exists  $x\in P_{i+2}$ and $y\in P_i$ such that $x+y\in \tilde{P}_{i+3}$,
\item[\rm(iii)] there exists  $x\in P_{i+2}$ and $y\in P_i$ such that $y<x.$
\end{description}
 \end{lemma}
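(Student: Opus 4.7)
The plan is to prove all three claims by direct linear-algebraic construction inside $H$, exploiting the hypothesis $1 \leq i \leq N-3$, which guarantees $i+3 \leq N = \dim(H/h)$ so that there is enough room for subspaces of the required dimensions that are suitably positioned with respect to $h$.

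For part (iii) I take any $x \in P_{i+2}$, which exists since $i+2 \leq N-1$, and let $y$ be any $i$-dimensional subspace of $x$. Then $y \cap h \subseteq x \cap h = 0$, so $y \in P_i$ and $y < x$. For part (i) I start from an element $z \in P_{i+3}$, which exists by the hypothesis $i+3 \leq N$. Fixing a basis $v_1, \ldots, v_{i+3}$ of $z$, I set $x = \mathrm{span}(v_1, \ldots, v_{i+2})$ and $y = \mathrm{span}(v_1, \ldots, v_{i-1}, v_{i+3})$. Both lie inside $z$, so each meets $h$ trivially and we have $x \in P_{i+2}$, $y \in P_i$. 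Their sum is $z$, which is in $P_{i+3}$.

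Part (ii) is the most delicate and carries the main bookkeeping. Since $M \geq 1$, pick a nonzero $w \in h$; since $i+2 \leq N$, pick $x \in P_{i+2}$ and a basis $v_1, \ldots, v_{i+2}$ of $x$. Put $z = \mathrm{span}(w) + x$. Any element of $z$ has the form $aw + u$ with $u \in x$, and $aw + u \in h$ forces $u \in h \cap x = 0$; hence $z \cap h = \mathrm{span}(w)$, so $z \in \tilde{P}_{i+3}$. Now set $y = \mathrm{span}(v_1, \ldots, v_{i-1}, w + v_{i+2})$, which is $i$-dimensional, and observe $x+y = z$ because $y$ contributes $w+v_{i+2}$ while $x$ contains $v_{i+2}$, so $w \in x+y$.

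The one point requiring a short argument rather than mere inspection is the verification $y \cap h = 0$: if $a_1 v_1 + \cdots + a_{i-1} v_{i-1} + b(w + v_{i+2}) \in h$, then subtracting $bw \in h$ puts $a_1 v_1 + \cdots + a_{i-1} v_{i-1} + b v_{i+2}$ into $h \cap x = 0$, forcing all $a_j = 0$ and $b = 0$. There is no real obstacle beyond this; the heart of the lemma is simply ensuring that each constructed subspace sits correctly relative to $h$, and the range assumption $i \leq N-3$ is precisely what makes $P_{i+3}$ and $\tilde{P}_{i+3}$ nonempty so that the construction can proceed.
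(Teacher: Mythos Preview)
Your proof is correct and spells out explicitly what the paper leaves implicit: the paper's proof consists of the single line ``By the arguments about $\mathcal{A}_q(N,M)$ given in Section 2,'' whereas you give concrete constructions of the required subspaces. Your argument is the natural way to unpack that reference, so the approaches are essentially the same, with yours being the fully worked-out version.
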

\begin{proof}
By the arguments about $\mathcal{A}_q(N,M)$ given in Section 2.
\end{proof}

\begin{lemma}\label{lem:rli311}
The following hold in the algebra $T$:
\begin{description}
  \item[\rm(i)] $R^2F_{N-2}, R^3LF_{N-2}$ are  linearly independent,
  \item[\rm(ii)] $L^2F_{N}, RL^3F_{N}$ are  linearly independent.
\end{description}
 \end{lemma}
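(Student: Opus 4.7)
The approach to part (i) will be to assume a linear dependence $\alpha R^2F_{N-2}+\beta R^3LF_{N-2}=0$ and exhibit two specific $(x,y)$-entries that force $\alpha=\beta=0$. For part (ii) I will transpose (i): using $R^t=L$ together with the commutation relations (\ref{12153}), (\ref{12154}), one checks that $(R^2F_{N-2})^t=F_{N-2}L^2=L^2F_N$ and $(R^3LF_{N-2})^t=F_{N-2}RL^3=RL^3F_N$, so linear independence transfers from (i) to (ii).

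For the first, ``skew'' entry, I will invoke Lemma~\ref{lem:rli31}(i) to pick $x\in P_N$ and $y\in P_{N-2}$ with $x+y\in\tilde P_{N+1}$. Then $\dim(x+y)=N+1$, whence $\dim(x\cap y)=N-3$ and in particular $y\not\subseteq x$, so $(R^2F_{N-2})_{xy}=0$. I will then expand
$$
(R^3LF_{N-2})_{xy}=\sum_{\substack{u\in P_{N-3}\\ u\,\lessdot\,y}}(R^3)_{xu}
$$
and argue that a summand is nonzero only when $u\subseteq x$, hence $u\subseteq x\cap y$; since $\dim u=N-3=\dim(x\cap y)$, this forces $u=x\cap y$. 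The chains $u\,\lessdot\, v_1\,\lessdot\, v_2\,\lessdot\, x$ then correspond to complete flags in the three-dimensional quotient $x/u$, all of whose intermediate subspaces lie inside $x$ and therefore belong to $P$; their number is $(q+1)(q^2+q+1)$. Consequently $(R^3LF_{N-2})_{xy}=(q+1)(q^2+q+1)\ne 0$, and the $(x,y)$-entry of the vanishing combination yields $\beta=0$.

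For the second, ``aligned'' entry, I will invoke Lemma~\ref{lem:rli31}(ii) to pick $x'\in P_N$ and $y'\in P_{N-2}$ with $y'\subseteq x'$. Then $(R^2F_{N-2})_{x'y'}$ counts hyperplanes of $x'$ containing $y'$, each of which lies in $P$ because $x'\cap h=0$; there are $q+1$ of them, so $(R^2F_{N-2})_{x'y'}=q+1\ne 0$. Combined with $\beta=0$ from the previous step, the $(x',y')$-entry of the vanishing combination forces $\alpha=0$, completing (i).

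The main obstacle is the combinatorial step at the skew pair: one must recognize that among the many $u\lessdot y$, only the forced intersection $u=x\cap y$ contributes to $(R^3)_{xu}$. Once this is isolated, the chain count reduces to standard flag enumeration in $\mathbb{F}_q^3$, and the remaining arguments are routine manipulations of the relations in (\ref{12153}), (\ref{12154}) together with the transpose identity $R^t=L$.
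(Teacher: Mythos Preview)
Your proof is correct and follows essentially the same approach as the paper. The paper likewise computes the $(x,y)$-entries of $R^2F_{N-2}$ and $R^3LF_{N-2}$ for the two cases $y<x$ and $x+y\in\tilde P_{N+1}$ (obtaining the values $q+1$, $(q-1)^{-2}(q+1)(q^3-1)(q^{N-2}-1)$ in the first case and $0$, $(q-1)^{-1}(q+1)(q^3-1)$ in the second, which agree with yours), observes that the resulting $2\times2$ matrix is upper triangular with nonzero diagonal, invokes Lemma~\ref{lem:rli31}, and then derives (ii) from (i) via the transpose exactly as you do.
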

\begin{proof}
(i) For    $x\in P_{N}$ and $y\in P_{N-2}$, we calculate  the $(x,y)$-entry of each matrix:

\medskip
\centerline{
\begin{tabular}[t]{c c|l}
$(x,y)$-entry of $R^2F_{N-2}$ &$(x,y)$-entry of $R^3LF_{N-2}$ & condition
   \\  \hline
 $q+1$ & $(q-1)^{-2}(q+1)(q^3-1)(q^{N-2}-1)$ &  $y< x$
 \\
 $0$ & $(q-1)^{-1}(q+1)(q^3-1)$ & $x+y\in \tilde{P}_{N+1}$
   \\
 \end{tabular}}
\medskip
\noindent
In the above table the entries form a $2\times 2$ matrix. This matrix is upper triangular with nonzero diagonal entries, so it is invertible. The result follows in view of Lemma~\ref{lem:rli31}.

\noindent
(ii) Apply the transpose map to the matrices in (i) and use $R^t=L$ along with (\ref{12153}), (\ref{12154}).
\end{proof}

\begin{lemma}\label{lem:rli0}
For $1\leq i\leq N-3$, the following hold in the algebra $T$:
\begin{description}
  \item[\rm(i)] $R^2F_i, R^3LF_i, LR^3F_i$ are  linearly independent;
  \item[\rm(ii)] $L^2F_{i+2}, L^3RF_{i+2}, RL^3F_{i+2}$ are  linearly independent.
\end{description}
 \end{lemma}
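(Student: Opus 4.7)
The approach imitates the proof of Lemma~\ref{lem:rli311}, but exploits all three types of pairs produced by Lemma~\ref{lem:rli}. For each of (a) $y<x$, (b) $x+y\in P_{i+3}$, (c) $x+y\in\tilde P_{i+3}$, I would pick one pair $(x,y)\in P_{i+2}\times P_i$ satisfying that condition and compute the $(x,y)$-entry of each of $R^2F_i$, $R^3LF_i$, $LR^3F_i$ directly from the definitions. The entries of $R^2F_i$ are immediate: in case~(a) this is the number of $1$-dimensional subspaces of $x/y$, namely $q+1$, and in cases~(b),~(c) it is $0$ since $y\not\subseteq x$.

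For $R^3LF_i$, I would write $(R^3LF_i)_{xy}=\sum_{z\in P_{i-1},\,z\subseteq y\cap x}(R^3)_{xz}$. When $z\subseteq x$ with $\dim x-\dim z=3$, $(R^3)_{xz}$ equals the number $(q+1)(q^3-1)/(q-1)$ of complete flags in $x/z$, and any subspace of $y$ lies automatically in $P$. So the entry equals $(q+1)(q^3-1)/(q-1)$ times the number of $(i-1)$-dimensional subspaces of $y\cap x$. A routine dimension count gives $\dim(y\cap x)=i,\,i-1,\,i-1$ in cases~(a),~(b),~(c), so the subspace count is $(q^i-1)/(q-1)$, $1$, $1$ respectively.

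For $LR^3F_i$, dually, $(LR^3F_i)_{xy}=\sum_{v\in P_{i+3},\,v\supseteq x+y}(R^3)_{vy}$ equals $(q+1)(q^3-1)/(q-1)$ times the number of $v\in P_{i+3}$ containing $x+y$. In case~(a), such a $v$ is a $1$-dimensional extension of $x$ that remains disjoint from $h$; passing to $H/x$ and using that the image $(h+x)/x$ has dimension $M$ (since $x\cap h=0$), this count is $(q^{N+M-i-2}-q^M)/(q-1)=q^M(q^{N-i-2}-1)/(q-1)$. In cases~(b),~(c) the only candidate is $v=x+y$, which lies in $P$ in case~(b) and not in case~(c), giving $1$ and $0$ respectively.

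Arranging the three cases as rows and the three matrices as columns yields a $3\times 3$ table of entries
\[
\begin{pmatrix} q+1 & \alpha & \beta\\ 0 & \gamma & \gamma\\ 0 & \gamma & 0\end{pmatrix},\qquad \gamma:=(q+1)(q^3-1)/(q-1)\ne 0,
\]
whose determinant $-(q+1)\gamma^2$ is nonzero. Consequently the three columns, and hence the matrices $R^2F_i,\,R^3LF_i,\,LR^3F_i$, are linearly independent, proving~(i). Part~(ii) follows by applying the transpose map to each identity in~(i), using $R^t=L$ together with (\ref{12153}) and (\ref{12154}). The main technical point is the count of $v\in P_{i+3}$ with $v\supseteq x$ in case~(a), which requires a quotient argument in $H/x$; the other entries are routine flag counts.
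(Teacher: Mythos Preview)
Your proof is correct and follows essentially the same approach as the paper: pick representative pairs $(x,y)$ of the three types guaranteed by Lemma~\ref{lem:rli}, compute the $(x,y)$-entries of the three matrices, and observe that the resulting $3\times 3$ array is invertible. The only cosmetic difference is that the paper orders the columns as $R^2F_i,\ LR^3F_i,\ R^3LF_i$, which makes the array upper triangular with nonzero diagonal entries $q+1,\gamma,\gamma$, so invertibility is read off without computing a determinant; your column order swaps the last two, giving a non-triangular array whose determinant $-(q+1)\gamma^2$ you compute directly.
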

\begin{proof}
(i) For    $x\in P_{i+2}$ and $y\in P_i$, we calculate  the $(x,y)$-entry of each matrix:

\bigskip
\centerline{
\begin{tabular}[t]{c c c|l}
$(x,y)$-entry of $R^2F_i$&$(x,y)$-entry of $LR^3LF_i$ &$(x,y)$-entry of $R^3LF_i$ & condition
   \\  \hline
$q+1$& $(q-1)^{-1}q^M(q^{N-i-2}-1)\gamma$ & $(q-1)^{-1}(q^{i}-1)\gamma$ & $y< x$
  \\
$0$ & $\gamma$ & $\gamma$ &  $x+y\in P_{i+3}$
 \\
$0$ & $0$ & $\gamma$ & $x+y\in \tilde{P}_{i+3}$
   \\
 \end{tabular}}
\noindent
where
\begin{eqnarray*}
\gamma=(q-1)^{-1}(q+1)(q^3-1).
\end{eqnarray*}
\noindent
In the above table the entries form a $3\times 3$ matrix. This matrix is upper triangular with nonzero diagonal entries, so it is invertible. The result follows in view of Lemma~\ref{lem:rli}.

\noindent
(ii) Apply the transpose map to the matrices in (i) and use $R^t=L$ along with (\ref{12153}), (\ref{12154}).
\end{proof}


\section{Leonard pairs}

In this section, we recall the definition of a Leonard pair
and discuss some basic facts about these objects.

Through this section, fix an integer  $d\geq 0$.  Let $V$ denote
 a  vector space over $\mathbb{C}$ with dimension $d+1$.
Denote by ${\rm End}(V)$  the $\mathbb{C}$-algebra of all
$\mathbb{C}$-linear maps  $V\rightarrow V$.
Let $\{v_i\}_{i=0}^d$ denote a basis of $V$. For
  $A\in {\rm End}(V)$  and $X\in {\rm Mat}_{d+1}(\mathbb{C})$,
  we say that $X$ {\em represents $A$
with respect to} $\{v_i\}_{i=0}^d$ whenever
$Av_j=\sum_{i=0}^d X_{ij}v_i$ for $0\leq j\leq d$.
For $X\in {\rm Mat}_{d+1}(\mathbb{C})$, $X$ is called
{\em upper bidigonal}
whenever every nonzero entry appears on the diagonal or the superdiagonal.
The matrix $X$ is called
{\em lower bidigonal}
whenever every nonzero entry appears on the diagonal or  the subdiagonal. The matrix $X$ is called {\em tridiagonal}
  whenever every nonzero entry appears  on the diagonal, the superdiagonal, or the
subdiagonal. Assume that $X$ is tridiagonal. Then $X$ is
 called {\em irreducible} whenever
the  entries on the superdiagonal and subdiagonal  are all nonzero.

\begin{definition}\rm\cite[Definition 1.1]{ter7+}\label{def:leonard}
By a {\em Leonard pair} on $V$, we mean an ordered pair
$A, A^*$ of elements in ${\rm End}(V)$ that satisfy
the following conditions:
\begin{enumerate}
  \item[(i)] there exists a basis for $V$ with respect to
   which the matrix representing $A$ is irreducible tridiagonal
   and the matrix representing $A^*$ is diagonal;
  \item[(ii)] there exists a basis for $V$
  with respect to which the matrix representing $A^*$
  is irreducible tridiagonal and the matrix representing $A$ is diagonal.
\end{enumerate}
We call $V$ the {\em underlying vector space}, and call $d$  the {\em diameter}.
\end{definition}

\begin{definition}\rm
Referring to  Definition~\ref{def:leonard}, consider a basis for $V$ from part (ii).
With respect to this basis, the matrix representing $A$ is diagonal, denoted by
$\rm{diag}(\theta_0, \theta_1, \ldots, \theta_d)$. We call $\{\theta_i\}_{i=0}^d$
an {\em eigenvalue sequence} for $A, A^*$. By  a {\em dual eigenvalue sequence}
for $A, A^*$, we mean an eigenvalue sequence for the Leonard pair $A^*, A$.
\end{definition}

\begin{note}\rm
Let $A, A^*$ denote a Leonard pair on $V$.
Let $\{\theta_i\}_{i=0}^d$ denote an eigenvalue sequence for $A, A^*$. Then
the sequence $\{\theta_{d-i}\}_{i=0}^d$ is an eigenvalue sequence for
$A, A^*$ and $A, A^*$ has no other eigenvalue sequence. A similar comment
applies to the dual eigenvalue sequence.
\end{note}

\begin{definition}{\rm \label{def:nonzero}{\cite[Definition 22.1]{ter7++}}}
\rm By a {\em parameter array over $\mathbb{C}$ of diameter $d$} we mean a sequence $(\{\theta_i\}_{i=0}^d, \{\theta^*_i\}_{i=0}^d,
 \{\varphi_j\}^d_{j=1}, \{\phi_j\}^d_{j=1})$ of elements in $\mathbb{C}$ that satisfy the following conditions:
\begin{enumerate}
  \item[\rm(i)] $\theta_i\neq \theta_j,\qquad \theta^*_i\neq \theta^*_j$ \qquad {\rm if}~ $i\neq j$\qquad \qquad ($0\leq i,j\leq d$),
  \item[\rm(ii)] $\varphi_i\neq 0,\qquad \phi_i\neq 0$ \qquad\qquad\qquad ($1\leq i\leq d$),
  \item[\rm(iii)] $\varphi_i=\phi_1\sum\limits_{h=0}^{i-1}\frac{\theta_h-\theta_{d-h}}{\theta_0-\theta_d}+(\theta^*_i-\theta^*_0)(\theta_{i-1}-\theta_d)$\qquad\qquad\qquad ($1\leq i\leq d$),
  \item[\rm(iv)] $\phi_i=\varphi_1\sum\limits_{h=0}^{i-1}\frac{\theta_h-\theta_{d-h}}{\theta_0-\theta_d}+(\theta^*_i-\theta^*_0)(\theta_{d-i+1}-\theta_0)$\qquad\qquad\qquad ($1\leq i\leq d$),
  \item[\rm(v)] the expressions
  $$
  \frac{\theta_{i-2}-\theta_{i+1}}{\theta_{i-1}-\theta_i},\qquad\qquad\qquad\frac{\theta^*_{i-2}-\theta^*_{i+1}}{\theta^*_{i-1}-\theta^*_i}
  $$
  are equal and independent of $i$ for $2\leq i\leq d-1$.
\end{enumerate}
\end{definition}

\begin{lemma}{\rm\cite[Theorem 17.1]{ter9}\label{thm:leoard equivalent}}
Let $A, A^*$ denote matrices in
${\rm Mat}_{d+1}(\mathbb{C})$. Assume that $A$ is  lower bidiagonal and $A^*$ is upper bidiagonal. Then the following {\rm(i)}, {\rm(ii)} are equivalent:
\begin{enumerate}
\item[\rm(i)] the pair $A, A^*$ is a Leonard pair,

\item[\rm(ii)] there exists a parameter array $(\{\theta_i\}_{i=0}^d, \{\theta^*_i\}_{i=0}^d,
 \{\varphi_j\}^d_{j=1}, \{\phi_j\}^d_{j=1})$ over $\mathbb{C}$ such that
$$
A_{ii}=\theta_i, \qquad\qquad A^*_{ii}=\theta^*_i\qquad\qquad\qquad(0\leq i\leq d),
$$
$$
A_{i,i-1}A^*_{i-1,i}=\varphi_i\qquad\qquad\qquad(1\leq i\leq d).
$$
\end{enumerate}
\end{lemma}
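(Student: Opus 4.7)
The plan is to prove both directions of the equivalence by exploiting the \emph{split form} structure: the hypothesis that $A$ is lower bidiagonal and $A^*$ is upper bidiagonal is the defining shape of the split decomposition of a Leonard pair, so the sequences in the parameter array correspond to entries of $A,A^*$ in this distinguished basis.

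For the direction (i) $\Rightarrow$ (ii), I would first read off the diagonal entries. Since $A$ is lower bidiagonal (hence lower triangular), its eigenvalues are precisely $A_{ii}$; call these $\theta_i$. Similarly set $\theta^*_i = A^*_{ii}$. The Leonard pair axiom forces both $A$ and $A^*$ to be multiplicity-free (from the irreducible tridiagonal representation in the two bases), so the $\theta_i$ are mutually distinct and likewise the $\theta^*_i$. Next, define $\varphi_i = A_{i,i-1}A^*_{i-1,i}$. The irreducibility of the tridiagonal representations forces $A_{i,i-1}\neq 0$ and $A^*_{i-1,i}\neq 0$, so $\varphi_i\neq 0$. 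The numbers $\phi_i$ are obtained analogously from the reversed split basis, i.e., by conjugating by the anti-diagonal permutation which swaps indices $i\leftrightarrow d-i$; this yields a second split form of the same pair and $\phi_i$ is read off from the analogous subdiagonal/superdiagonal product. The parameter array identities (iii)--(v) of Definition~\ref{def:nonzero} are then verified by expanding the two relations $AA^*A - (\beta+1)A^*AA^* \cdots$ (the tridiagonal relations, or equivalently the Askey--Wilson relations) in the split basis; comparing superdiagonal entries yields (iii) and (iv), and comparing diagonal entries after a short induction yields the constancy condition (v).

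For (ii) $\Rightarrow$ (i), I would construct candidate matrices $A$ lower bidiagonal and $A^*$ upper bidiagonal whose diagonals are $\theta_i$ and $\theta^*_i$ respectively, with $A_{i,i-1}=1$ and $A^*_{i-1,i}=\varphi_i$ (any nonzero factorization of $\varphi_i$ will do, the Leonard pair property being invariant under diagonal rescaling). To verify the Leonard pair axioms I must produce a basis in which $A$ is diagonal and $A^*$ is irreducible tridiagonal, and another basis with the roles swapped. Since $A$ is lower triangular with distinct diagonal entries $\theta_i$, it is diagonalizable; write $A = P D P^{-1}$ with $D = \mathrm{diag}(\theta_0,\ldots,\theta_d)$. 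The content of the proof is to show that $P^{-1} A^* P$ is irreducible tridiagonal, and this is exactly where the parameter array axioms (iii)--(v) come in: these identities are engineered so that the resulting transformed $A^*$ has precisely the Askey--Wilson tridiagonal shape with nonzero sub- and superdiagonal entries $\phi_i$ (respectively the rescaled $\varphi_i$). The symmetric statement for $A^*$ follows by applying the same argument to the reversed-order parameter array.

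The main obstacle is the reverse direction: verifying that the transition matrix $P$ (an explicit lower-triangular matrix whose entries are products of the $\varphi_j$ divided by differences of $\theta_i$'s) conjugates $A^*$ into irreducible tridiagonal form. Combinatorially this reduces to evaluating certain sums of products of the $\varphi_j$ weighted by ratios $(\theta_h - \theta_{d-h})/(\theta_0-\theta_d)$, and the balance between axioms (iii), (iv) and the constancy axiom (v) is precisely what makes all non-tridiagonal entries collapse to zero. I would organize this collapse as a telescoping induction on the band distance $|i-j|$, with the base case $|i-j|=2$ handled by (iii) and (iv), and the propagation step $|i-j|\geq 3$ handled by the recursive two-term relation among the $\theta_i$ encoded in (v). Once this is done, the Leonard pair conditions of Definition~\ref{def:leonard} follow immediately.
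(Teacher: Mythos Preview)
The paper does not prove this lemma at all; it is quoted as Theorem~17.1 of the external reference~\cite{ter9} and used as a black box. There is therefore no in-paper argument to compare your proposal against.

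As for the viability of your sketch, the overall architecture---read off the split-form data for (i)$\Rightarrow$(ii), and in (ii)$\Rightarrow$(i) build $A,A^*$ in split form and then conjugate to the diagonal/tridiagonal bases---is indeed how Terwilliger's proof in \cite{ter9} is organized. However, several steps in your write-up are imprecise or under-argued. First, your justification that $A_{i,i-1}\neq 0$ and $A^*_{i-1,i}\neq 0$ (``irreducibility of the tridiagonal representations'') appeals to the wrong basis; the correct argument is that if $A_{i,i-1}=0$ then $\mathrm{span}(e_0,\ldots,e_{i-1})$ is invariant under both $A$ (lower bidiagonal with that subdiagonal gap) and $A^*$ (automatically, since upper bidiagonal), contradicting irreducibility of the module. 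Second, the ``anti-diagonal conjugation'' to produce the $\phi_i$ needs care: reversal turns lower bidiagonal into upper bidiagonal, so one must simultaneously reverse the eigenvalue ordering to get a second split form; the $\phi_i$ come from that second form. Third, the expression you wrote for the tridiagonal relations is garbled, and the hard content---verifying that conjugating $A^*$ by the eigenbasis transition matrix of $A$ yields an \emph{irreducible tridiagonal} matrix, with the cancellation governed precisely by axioms (iii)--(v) of Definition~\ref{def:nonzero}---is asserted rather than carried out. Your ``telescoping induction on band distance'' is a plausible mechanism, but until it is executed the proposal remains a plan rather than a proof.
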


\begin{lemma}{\rm\cite[Theorem 10.1]{itotanater}\label{lem:tdrelation}}
Let $A, A^*$ denote a Leonard pair over  $\mathbb{C}$.
Then there exists a sequence of scalars
$\beta, \gamma, \gamma^*, \varrho, \varrho^*$ taken
from $\mathbb{C}$ such that both

\begin{equation}\label{equa11}
[A, A^2A^*-\beta AA^*A+A^*A^2-\gamma(AA^*+A^*A)-\varrho A^*]=0,
\end{equation}
\begin{equation}\label{equa22}
[A^*, A^{*2}A-\beta A^*AA^*+AA^{*2}-\gamma^*(A^*A+AA^*)-\varrho^* A]=0,
\end{equation}
where $[r,s]$ means $rs-sr$. The sequence is uniquely determined
by the Leonard pair $A, A^*$ provided that the diameter is at least $3$.
\end{lemma}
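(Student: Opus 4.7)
The plan is to work in a basis of $V$ with respect to which $A^*$ is diagonal and $A$ is irreducible tridiagonal. By Lemma~\ref{thm:leoard equivalent} the Leonard pair is encoded by a parameter array $(\{\theta_i\}_{i=0}^d, \{\theta^*_i\}_{i=0}^d, \{\varphi_j\}_{j=1}^d, \{\phi_j\}_{j=1}^d)$ satisfying Definition~\ref{def:nonzero}, and the diagonal entries of $A^*$ are $\theta^*_0,\ldots,\theta^*_d$ while the diagonal entries of $A$ are $\theta_0,\ldots,\theta_d$. The strategy is to read off the five scalars directly from the eigenvalue and dual eigenvalue sequences, and then verify the tridiagonal relations entry by entry.

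First I would use condition (v) of Definition~\ref{def:nonzero} to extract $\beta$: when $d\ge 3$ there is a unique scalar $\beta$ such that the common value of the two ratios in (v) equals $\beta+1$, while when $d\le 2$ this condition is vacuous and any choice of $\beta$ works. As a direct consequence of the three-term recurrence $\theta_{i+1}-(\beta+1)\theta_i+(\beta+1)\theta_{i-1}-\theta_{i-2}=0$ (and its dual), I would verify that the expressions
$$\gamma := \theta_{i-1}-\beta\theta_i+\theta_{i+1},\qquad \gamma^* := \theta^*_{i-1}-\beta\theta^*_i+\theta^*_{i+1},$$
$$\varrho := \theta_{i-1}^2-\beta\theta_{i-1}\theta_i+\theta_i^2-\gamma(\theta_{i-1}+\theta_i),\qquad \varrho^* := \theta^{*2}_{i-1}-\beta\theta^*_{i-1}\theta^*_i+\theta^{*2}_i-\gamma^*(\theta^*_{i-1}+\theta^*_i)$$
are each independent of $i$ in their natural index range. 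These are precisely the five scalars appearing in the statement.

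The heart of the proof is the verification of (\ref{equa11}). Since $A^*$ is diagonal and $A$ is tridiagonal in the chosen basis, the inner operator $A^2A^*-\beta AA^*A+A^*A^2-\gamma(AA^*+A^*A)-\varrho A^*$ sends any basis vector $v_j$ into the span of $v_{j-2},v_{j-1},v_j,v_{j+1},v_{j+2}$; applying the outer commutator with $A$ enlarges the support by one on each side, and I would show that the coefficient of every resulting $v_k$ vanishes. Each such coefficient is a polynomial in the $\theta^*_i$ and in the subdiagonal/superdiagonal entries of $A$ (which satisfy $A_{i,i-1}A^*_{i-1,i}=\varphi_i$ by Lemma~\ref{thm:leoard equivalent}); after grouping, each one becomes a telescoping combination of the two recurrences $\theta^*_{i+1}-\beta\theta^*_i+\theta^*_{i-1}=\gamma^*$ and the $\varrho^*$-identity, together with conditions (iii) and (iv) of Definition~\ref{def:nonzero}. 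Equation (\ref{equa22}) then follows by interchanging the roles of $A$ and $A^*$, which corresponds to reversing the parameter array.

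Finally, for the uniqueness assertion when $d\ge 3$, I would isolate a few specific matrix entries of (\ref{equa11}) in the split basis: these produce a linear system in $(\beta,\gamma,\varrho)$ whose coefficient matrix is invertible thanks to the distinctness of the $\theta^*_i$ and the condition $d\ge 3$, uniquely pinning down these three scalars from $A,A^*$; the analogous argument using (\ref{equa22}) pins down $\gamma^*,\varrho^*$. The main obstacle is the bookkeeping in the central step: although conceptually routine, expanding the nested commutator into coefficients of $v_{j-3},\ldots,v_{j+3}$ and recognizing each coefficient as a consequence of the $\gamma^*$- and $\varrho^*$-recurrences requires careful organization. This is precisely the calculation executed in detail in \cite[Theorem 10.1]{itotanater}, which we invoke.
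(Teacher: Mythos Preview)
The paper gives no proof of this lemma; it is simply quoted from \cite[Theorem~10.1]{itotanater}. Your proposal likewise ends by invoking that reference, so at the level of what is actually established the two agree.

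That said, your sketch has two slips worth fixing before you could claim it as an independent proof. First, you conflate two different bases: you announce a basis in which $A^*$ is diagonal and $A$ is irreducible tridiagonal, but then invoke Lemma~\ref{thm:leoard equivalent}, which concerns the split (lower/upper bidiagonal) basis. In the tridiagonal basis the diagonal entries of $A$ are \emph{not} the eigenvalues $\theta_0,\ldots,\theta_d$; that only holds in the split basis. Pick one basis and stick with it. Second, in your verification of (\ref{equa11}) you say the coefficients reduce to the $\gamma^*$- and $\varrho^*$-identities; but (\ref{equa11}) involves $\gamma$ and $\varrho$, not their starred counterparts. What actually happens (in either basis) is that the extreme off-diagonal entries vanish by the $\beta$-recurrence for $\{\theta^*_i\}$, while the remaining entries use the $\gamma$- and $\varrho$-identities for $\{\theta_i\}$ together with the parameter-array conditions. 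The roles of starred and unstarred quantities are swapped only when you pass to (\ref{equa22}).
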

We call (\ref{equa11}), (\ref{equa22}) the {\em tridiagonal relations}.

\begin{lemma}{\rm\cite[Corollary~4.4, Theorem~4.5]{tervid}\label{lem:leonardiff}}
Let $A, A^*$ denote a Leonard pair over  $\mathbb{C}$.
Let $\{\theta_i\}_{i=0}^d$ (resp. $\{\theta^*_i\}_{i=0}^d$) denote an eigenvalue sequence
(resp. dual eigenvalue sequence) of $A, A^*$.
Let $\beta, \gamma, \gamma^*, \varrho, \varrho^*$
denote scalars in $\mathbb{C}$. Then these scalars satisfy
{\rm(\ref{equa11}), (\ref{equa22})}    if and only if the following
{\rm(i)--(v)} hold:
\begin{enumerate}
\item[\rm(i)]
$
\beta+1=\frac{\theta_{i-2}-\theta_{i+1}}{\theta_{i-1}-\theta_i}
=\frac{\theta^*_{i-2}-\theta^*_{i+1}}{\theta^*_{i-1}-\theta^*_i}\qquad\qquad\qquad (2\leq i\leq d-1),
$

\item[\rm(ii)] $\gamma=\theta_{i-1}-\beta \theta_i+\theta_{i+1}\qquad\qquad\qquad (1\leq i\leq d-1),$

\item[\rm(iii)] $\gamma^*=\theta^*_{i-1}-\beta \theta^*_i+\theta^*_{i+1}\qquad\qquad\qquad (1\leq i\leq d-1),$

\item[\rm(iv)] $\varrho=\theta^2_{i-1}-\beta \theta_{i-1}\theta_i+\theta^2_{i}-
\gamma (\theta_{i-1}+\theta_i)\qquad\qquad\qquad (1\leq i\leq d),$

\item[\rm(v)] $\varrho^*=\theta^{*2}_{i-1}-\beta \theta^*_{i-1}\theta^*_i+\theta^{*2}_{i}
-\gamma^* (\theta^*_{i-1}+\theta^*_i)\qquad\qquad\qquad (1\leq i\leq d).$

\end{enumerate}
\end{lemma}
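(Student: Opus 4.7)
The plan is to translate each tridiagonal relation into a polynomial identity in the eigenvalues by working in a basis adapted to the Leonard pair structure. By Definition~\ref{def:leonard}(ii), there is a basis of the underlying vector space in which $A$ is represented by $\mathrm{diag}(\theta_0,\theta_1,\ldots,\theta_d)$ and $A^*$ is represented by an irreducible tridiagonal matrix $(b^*_{ij})$. A direct entry-by-entry calculation in this basis gives, for $i\ne j$,
$$\bigl(A^2A^* - \beta AA^*A + A^*A^2 - \gamma(AA^*+A^*A) - \varrho A^*\bigr)_{ij} = b^*_{ij}\, p(\theta_i,\theta_j),$$
where $p(x,y) := x^2 - \beta xy + y^2 - \gamma(x+y) - \varrho$. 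Taking commutator with $A$ multiplies the $(i,j)$-entry by $\theta_i-\theta_j$, so (\ref{equa11}) is equivalent to $p(\theta_i,\theta_j)=0$ for every pair $(i,j)$ with $b^*_{ij}\ne 0$; by irreducibility this is exactly the pairs with $|i-j|=1$, and these are precisely the instances of condition (iv).

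Next I would derive (ii) and the $\theta$-half of (i) from the system $\{p(\theta_{i-1},\theta_i)=0\}_{i=1}^d$ by successive differencing: subtracting consecutive instances and dividing by the nonzero factor $\theta_{i+1}-\theta_{i-1}$ produces condition (ii), and repeating once more (dividing by $\theta_i-\theta_{i+1}$) produces condition (i) for the $\theta$'s. Conversely, (i), (ii), (iv) immediately recover $p(\theta_i,\theta_{i\pm 1})=0$, so the two sides are equivalent. Applying the symmetric argument in the basis from Definition~\ref{def:leonard}(i), where $A^*$ is diagonal with entries $\theta^*_i$ and $A$ is irreducible tridiagonal, translates (\ref{equa22}) into the analogous polynomial identities in the $\theta^*$'s, yielding equivalence with (iii), (v), and the $\theta^*$-half of (i). Note that the same scalar $\beta$ appears in both (\ref{equa11}) and (\ref{equa22}), which is exactly why the two halves of (i) are set equal to the common value $\beta+1$.

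The main obstacle is the index bookkeeping in the chain of differencings: one has to verify that (ii) is obtained exactly for $1\le i\le d-1$ and (i) exactly for $2\le i\le d-1$, matching the ranges in the statement. The essential analytic inputs are the distinctness of the eigenvalues (so that one may divide by $\theta_{i+1}-\theta_{i-1}$ and $\theta_i-\theta_{i+1}$, and similarly in the dual basis) and the irreducibility of the tridiagonal representations of $A^*$ and $A$ (which is what forces the scalar identity $p(\theta_i,\theta_j)=0$ from the vanishing of an off-diagonal matrix entry). Once these are in place, everything reduces to the expansion carried out in the first paragraph and the algebraic manipulations of the second; no subtler estimate is required.
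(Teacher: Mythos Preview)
The paper does not supply its own proof of this lemma; it is quoted from \cite[Corollary~4.4, Theorem~4.5]{tervid} and used as a black box. Your argument is correct and is, in fact, essentially the standard proof one finds in that reference: pass to a basis where $A$ is diagonal and $A^*$ irreducible tridiagonal, observe that the off-diagonal $(i,j)$-entry of the bracket in (\ref{equa11}) equals $(\theta_i-\theta_j)\,b^*_{ij}\,p(\theta_i,\theta_j)$ with $p(x,y)=x^2-\beta xy+y^2-\gamma(x+y)-\varrho$, and use distinctness of the $\theta_i$ together with irreducibility of $A^*$ to reduce (\ref{equa11}) to the scalar system $p(\theta_{i-1},\theta_i)=0$ for $1\le i\le d$. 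The successive differencing you describe then produces (ii) and the $\theta$-half of (i), and the dual argument gives (iii), (v), and the $\theta^*$-half of (i).

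One small point worth making explicit in your write-up: the logical structure is really that (\ref{equa11}) is equivalent to (iv) alone, and (\ref{equa22}) to (v) alone; conditions (i)--(iii) are \emph{consequences} of (iv) and (v) via your differencing argument, so the five conditions are equivalent to the two. Also, your division by $\theta_{i+1}-\theta_{i-1}$ uses that the eigenvalues are mutually distinct (not just that consecutive ones differ), which is guaranteed by the Leonard pair axioms but should be cited. With those two clarifications the proof is complete.
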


Let $n$ denote a positive integer. For the rest of this section, let $\{\theta_i\}^n_{i=0}$ denote a sequence of scalars in $\mathbb{C}$.
For $\beta\in \mathbb{C}$,  $\{\theta_i\}_{i=0}^n$ is called
 {\em $\beta$-recurrent} whenever
$$
\theta_{i-2}-(\beta+1)\theta_{i-1}+(\beta+1)\theta_{i}-\theta_{i+1}=0 \qquad\qquad (2\leq i\leq n-1).
$$

\begin{lemma}{\rm\cite[Lemma~9.2]{ter7+}\label{lem:betarecurrence}}
 Given a sequence $\{\theta_i\}_{i=0}^{n}$ of scalars in $\mathbb{C}$ and
given  $\beta\in \mathbb{C}$.
\begin{description}
\item[\rm(i)] Assume  $\beta=2$.  Then $\{\theta_i\}_{i=0}^{n}$  is $\beta$-recurrent if and only if there exist scalars $a,b,c\in \mathbb{C}$ such that
$$
\theta_i=a+bi+ci^2\qquad\qquad (0\leq i\leq n).
$$
  \item[\rm(ii)] Assume  $\beta=-2$.  Then $\{\theta_i\}_{i=0}^{n}$  is $\beta$-recurrent if and only if there exist scalars $a,b,c\in \mathbb{C}$ such that
$$
\theta_i=a+b(-1)^i+ci(-1)^i\qquad\qquad (0\leq i\leq n).
$$
  \item[\rm(iii)]  Assume  $\beta\neq 2$, $\beta\neq-2$.  Then $\{\theta_i\}_{i=0}^{n}$  is $\beta$-recurrent
if and only if there exist scalars $a,b,c\in \mathbb{C}$ such that
$$
\theta_i=a+bQ^i+cQ^{-i}\qquad\qquad (0\leq i\leq n),
$$
where $\beta=Q+Q^{-1}$.
\end{description}
\end{lemma}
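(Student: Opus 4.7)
The plan is to recognize the relation as a constant-coefficient linear recurrence of order~$3$ and analyze its characteristic polynomial. Rewriting the recurrence as $\theta_{i+1}=\theta_{i-2}-(\beta+1)\theta_{i-1}+(\beta+1)\theta_{i}$ for $2\leq i\leq n-1$, the associated characteristic polynomial is
$$
p(Q)=Q^3-(\beta+1)Q^2+(\beta+1)Q-1=(Q-1)(Q^2-\beta Q+1).
$$
The roots of $Q^2-\beta Q+1=0$ are $Q$ and $Q^{-1}$ with $Q+Q^{-1}=\beta$; so for $\beta\neq\pm 2$ the three roots of $p$ are distinct ($1,Q,Q^{-1}$), while $p=(Q-1)^3$ when $\beta=2$ and $p=(Q-1)(Q+1)^2$ when $\beta=-2$.

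For the ``if'' direction I would substitute each of the proposed closed-form candidates $a+bQ^{i}+cQ^{-i}$ (respectively $a+bi+ci^{2}$ or $a+b(-1)^{i}+ci(-1)^{i}$) into the recurrence and verify it is annihilated. In case (iii) this uses only that $Q$ and $Q^{-1}$ are roots of $p$. In the degenerate cases (i) and (ii) one checks additionally that the ``resonant'' sequences $i,i^{2}$ (respectively $i(-1)^{i}$) lie in the kernel; these correspond to the repeated roots of $p$, and a short direct computation confirms them.

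For the ``only if'' direction I split on $n$. When $n\geq 3$, the rewritten recurrence shows that $\theta_3,\theta_4,\ldots,\theta_n$ are uniquely determined by $\theta_0,\theta_1,\theta_2$, so the solution space has dimension at most~$3$. The three candidate sequences from the ``if'' direction lie in this space; restricting them to indices $0,1,2$ and computing the resulting $3\times 3$ determinant (a Vandermonde in case (iii), and short direct computations in cases (i) and (ii)) shows they are linearly independent. Hence they form a basis of the solution space, and any $\beta$-recurrent $\{\theta_i\}$ has the claimed form. When $n\leq 2$ the recurrence is vacuous; the same $3\times 3$ determinant settles $n=2$ by showing $(a,b,c)\mapsto(\theta_0,\theta_1,\theta_2)$ is a bijection of $\mathbb{C}^3$, and $n\leq 1$ is trivial. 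The main obstacle is simply handling the degenerate cases $\beta=\pm 2$, where the naive basis $\{1,Q^{i},Q^{-i}\}$ collapses (because $Q=1$ or $Q=Q^{-1}$) and must be replaced by the resonant solutions above.
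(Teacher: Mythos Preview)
Your proof is correct and follows the standard approach to linear recurrences with constant coefficients: factor the characteristic polynomial $p(Q)=(Q-1)(Q^2-\beta Q+1)$, identify the root structure in each case, and use the resulting solution basis together with a dimension count. The handling of the degenerate cases $\beta=\pm 2$ and of small $n$ is also correct.

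Note, however, that the paper does not supply its own proof of this lemma: it is simply quoted from \cite[Lemma~9.2]{ter7+} and used without argument. So there is no ``paper's proof'' to compare against here; your argument stands on its own as a self-contained justification of the cited result.
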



\section{The tridiagonal relations and  $\mathcal{A}_q(N, M)$}
We continue to discuss the poset $\mathcal{A}_q(N, M)$ from Section 2.
For the rest of the paper, we fix matrices $A, A^*$
of the following form:

\begin{equation}\label{leonard1}
A=\sum\limits_{i=0}^{N-1} \alpha_iRF_i+\sum\limits_{i=0}^N \theta_iF_i,
\end{equation}

\begin{equation}\label{leonard2}
A^*=\sum\limits_{i=1}^N \alpha^*_iLF_i+\sum\limits_{i=0}^N \theta^*_iF_i.
\end{equation}
The $\alpha_i$,  $\alpha^*_i$, $\theta_i$, $\theta^*_i$ are  scalars in $\mathbb{C}$ such that
\begin{eqnarray}
& &\alpha_i\neq0\qquad\qquad(0\leq i\leq N-1),\label{alph0}\\
& &\alpha^*_i\neq 0 \qquad\qquad(1\leq i\leq N),\label{alph}\\
& &\theta_i\neq\theta_j \qquad\qquad \theta^*_i\neq\theta^*_j \qquad\qquad {\rm if} ~~ i\neq j \qquad\qquad(0\leq i, j\leq N).\label{thetasatisfy}
\end{eqnarray}
The matrices $R, L$ are from (\ref{lr}) and $\{F_i\}_{i=0}^N$ are from
(\ref{fi}).

For  convenience, let
$$\alpha_{-1}=0,\qquad \alpha_{N}=0, \qquad  \alpha^*_{0}=0,\qquad \alpha^*_{N+1}=0.$$
Let $\theta_{-1}, \theta_{N+1}, \theta^*_{-1}, \theta^*_{N+1}$ denote indeterminates.
Define
\begin{equation}\label{xala}
\xi_i=\alpha_i\alpha^*_{i+1}\qquad\qquad \qquad(-1\leq i\leq N).
\end{equation}
Observe that $\xi_{-1}=0$, $\xi_N=0$ and $\xi_i\neq 0$ for $0\leq i\leq N-1.$

\noindent
Fix some scalars
$\beta, \gamma, \gamma^*, \varrho, \varrho^*$ in $\mathbb{C}$ and define
\begin{eqnarray}
& &B=[A, A^2A^*-\beta AA^*A+A^*A^2-\gamma(AA^*+A^*A)-\varrho A^*],\label{equa1}\\
& &B^*=[A^*, A^{*2}A-\beta A^*AA^*+AA^{*2}-\gamma^*(A^*A+AA^*)-\varrho^* A].\label{equa2}
\end{eqnarray}

We now find necessary and sufficient conditions for $B$ and $B^*$ to be zero.
Expand $B$ and $B^*$ as follows:
\begin{eqnarray}
B&=&A^3A^*-(\beta+1) A^2A^*A+(\beta+1)AA^*A^2-A^*A^3\nonumber
\\
& &-~\gamma(A^2A^*-A^*A^2)-\varrho (AA^*-A^*A),\label{equa1sim}
\\
B^*&=&A^{*3}A-(\beta+1) A^{*2}AA^*+(\beta+1)A^*AA^{*2}-AA^{*3}\nonumber
\\
& &-~\gamma^*(A^{*2}A-AA^{*2})-\varrho^* (A^*A-AA^*).\label{equa2sim}
\end{eqnarray}

\begin{lemma}\label{lem:1602241}
For $0\leq i,j\leq N$, we have
\begin{description}
  \item[{\rm(i)}] $F_iBF_j=0$ if $i-j<-1$ or $i-j>3$,
  \item[{\rm(ii)}] $F_iB^*F_j=0$ if $j-i<-1$ or $j-i>3$.
\end{description}
\end{lemma}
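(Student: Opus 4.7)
The plan is to argue entirely by tracking how $A$ and $A^*$ shift the grading $\{F_i\}_{i=0}^N$. First I would observe, using (\ref{leonard1}) together with (\ref{12153}), that the raising summand $\sum \alpha_i RF_i$ sends $F_j\mathbb{C}P$ into $F_{j+1}\mathbb{C}P$, while the diagonal summand $\sum \theta_i F_i$ preserves grade. Combined with Lemma~\ref{lem:firkfj}, this yields $F_iAF_j=0$ unless $i-j\in\{0,1\}$. Dually, from (\ref{leonard2}) and (\ref{12154}) one gets $F_iA^*F_j=0$ unless $i-j\in\{-1,0\}$.

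Next, for any word $X_1 X_2 \cdots X_n$ with each $X_s\in\{A,A^*\}$, I would insert a copy of $I=\sum_k F_k$ between consecutive factors to write
\[
F_iX_1X_2\cdots X_n F_j \;=\; \sum_{k_1,\ldots,k_{n-1}} F_iX_1F_{k_1}X_2F_{k_2}\cdots F_{k_{n-1}}X_nF_j.
\]
By the single-letter shifts above, a nonzero summand forces the telescoping identity $i-j = (i-k_1)+(k_1-k_2)+\cdots+(k_{n-1}-j)$ to be a sum of contributions, one from each $X_s$, drawn from $\{0,1\}$ when $X_s=A$ and from $\{-1,0\}$ when $X_s=A^*$. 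Consequently, if the word has $a$ copies of $A$ and $b$ copies of $A^*$, then $F_iX_1\cdots X_nF_j=0$ unless $i-j\in\{-b,-b+1,\ldots,a\}$.

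Now I would apply this principle to the expansions (\ref{equa1sim}) and (\ref{equa2sim}). Every monomial appearing in (\ref{equa1sim}) has at most three $A$'s and at most one $A^*$ (counted with multiplicity), so each such monomial contributes zero to $F_iBF_j$ unless $i-j\in\{-1,0,1,2,3\}$; this gives (i). Part (ii) follows by the mirror argument: each monomial in (\ref{equa2sim}) has at most one $A$ and at most three $A^*$'s, forcing $i-j\in\{-3,-2,-1,0,1\}$, equivalently $j-i\in\{-1,0,1,2,3\}$.

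There is no genuine obstacle in the proof, since the whole argument is a bookkeeping calculation about gradings; the only thing that needs to be checked carefully is that each of the eight monomials appearing in (\ref{equa1sim}) (and their duals in (\ref{equa2sim})) really does respect the claimed bound on the grade shift. No tool deeper than Lemma~\ref{lem:firkfj} and the resolution of the identity $I=\sum_k F_k$ from (\ref{12152}) is required.
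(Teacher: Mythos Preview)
Your argument is correct and is essentially the same approach as the paper's: the paper's proof is the single line ``Evaluating (\ref{equa1sim}), (\ref{equa2sim}) using (\ref{leonard1}), (\ref{leonard2}),'' and what you have written is exactly a careful unpacking of that evaluation via the grading shifts of $A$ and $A^*$. The only difference is level of detail; the underlying idea---that each monomial in $A,A^*$ moves the grade by an amount bounded by the number of $A$'s above and the number of $A^*$'s below---is identical.
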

\begin{proof}
Evaluating (\ref{equa1sim}), (\ref{equa2sim}) using (\ref{leonard1}), (\ref{leonard2}).
\end{proof}

\begin{lemma}\label{lem:btri} In the algebra $T$,
\begin{enumerate}
\item[\rm (i)]
for $0\leq i\leq N-3$, $F_{i+3}BF_i$ is equal to  $\alpha_{i}\alpha_{i+1}\alpha_{i+2}\bigl(\theta^*_i-\theta^*_{i+3}-(\beta+1)(\theta^*_{i+1}-\theta^*_{i+2})\bigr)$
times $R^3F_i$,

\item[\rm (ii)]
for $1\leq i\leq N$, $F_{i-1}BF_i$ is  equal to $\alpha^*_{i}(\theta_{i-1}-\theta_i)\bigl(\theta^2_{i-1}
-\beta\theta_{i-1}\theta_i+\theta^2_i-\gamma(\theta_{i-1}+\theta_i)
-\varrho\bigr)$ times $LF_i$,

\item[\rm (iii)]
for $0\leq i\leq N-1$, $F_{i+1}BF_i$ is  a weighted sum involving the following
terms and coefficients:

\medskip

\centerline{
\begin{tabular}[t]{c|l}\label{fl-1f}
{\rm term} & {\rm coefficient}
   \\  \hline
$RF_i$ & $\alpha_i(\theta^*_i-\theta^*_{i+1})\bigl(\theta^2_{i}
-\beta\theta_{i}\theta_{i+1}+\theta^2_{i+1}-\gamma(\theta_{i}+\theta_{i+1})
-\varrho\bigr)$
  \\
$R^2LF_i$ & $\alpha_i\alpha_{i-1}\alpha^*_i(\theta_{i-1}-\beta \theta_i
+\theta_{i+1}-\gamma)$
   \\
 $LR^2F_i$ &  $\alpha_i\alpha_{i+1}\alpha^*_{i+2}(-\theta_{i}+\beta \theta_{i+1}-\theta_{i+2}+\gamma)$
 \\
\end{tabular}}

\item[\rm (iv)] for $0\leq i\leq N$,
$F_{i}BF_i$ is a weighted sum  involving the following terms and coefficients:

\medskip

\centerline{
\begin{tabular}[t]{c|l}\label{fl0f}
{\rm term} & {\rm coefficient}
   \\  \hline
$RLF_i$ & $\alpha^*_i\alpha_{i-1}\bigl(\theta^2_{i-1}
-\beta\theta_{i-1}\theta_{i}+\theta^2_{i}-\gamma(\theta_{i-1}+\theta_{i})
-\varrho\bigr)$
  \\
$LRF_i$ & $-\alpha^*_{i+1}\alpha_i\bigl(\theta^2_i
-\beta\theta_i\theta_{i+1}+\theta^2_{i+1}-\gamma(\theta_i+\theta_{i+1})
-\varrho\bigr)$
 \\
\end{tabular}}

\item[\rm (v)] for $0\leq i\leq N-2$,
$F_{i+2}BF_i$ is a weighted sum  involving the following terms and coefficients:

\medskip
\centerline{
\begin{tabular}[t]{c|l}
{\rm term} & {\rm coefficient}
   \\  \hline
$R^2F_i$ & $\alpha_i\alpha_{i+1}\Big((\theta^*_i-\theta^*_{i+2})(\theta_{i}
+\theta_{i+1}+\theta_{i+2}-\gamma)
-(\beta+1)\bigl((\theta^*_{i+1}-\theta^*_{i+2})\theta_{i+2}
+(\theta^*_i-\theta^*_{i+1})\theta_i\bigr)\Big)$
  \\
$R^3LF_i$ & $\alpha_i\alpha_{i+1}\alpha_{i-1}\alpha^*_i$
   \\
 $R^2LRF_i$ & $-(\beta+1)\alpha^2_i\alpha_{i+1}\alpha^*_{i+1}$
\\
 $RLR^2F_i$ & $(\beta+1)\alpha_i\alpha^2_{i+1}\alpha^*_{i+2}$
 \\
$LR^3F_i$ & $-\alpha_i\alpha_{i+1}\alpha_{i+2}\alpha^*_{i+3}$
\end{tabular}}
\end{enumerate}
\end{lemma}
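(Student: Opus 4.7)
The plan is to compute $F_jBF_i$ directly from the expansion (\ref{equa1sim}) by decomposing $A$ and $A^*$ according to how they shift the grading. Write $A = A_R + A_D$ and $A^* = A^*_L + A^*_D$, where
$$A_R = \sum_{i=0}^{N-1}\alpha_i RF_i, \qquad A_D = \sum_{i=0}^N \theta_i F_i, \qquad A^*_L = \sum_{i=1}^N \alpha^*_i LF_i, \qquad A^*_D = \sum_{i=0}^N \theta^*_i F_i.$$
By (\ref{12153}), (\ref{12154}) and (\ref{12151}), $A_R$ raises the grading by $1$, $A^*_L$ lowers it by $1$, and $A_D, A^*_D$ preserve it. For a product $X$ of the generators $A,A^*$ in (\ref{equa1sim}) and for $F_jXF_i$, only those summands obtained by distributing the decompositions whose net shift equals $j-i$ survive. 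Moreover in any surviving summand the $A_D$-- and $A^*_D$--factors may be pushed past $F_i$ to absorb them as scalars $\theta_k$, $\theta^*_k$ whose subscripts are determined by the grading at the position where each diagonal factor sits; what remains is a product of $\alpha$'s, $\alpha^*$'s and a single word in $R,L$ acting on $F_i$.

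With this machinery in place, each part is a routine enumeration. For (i), the net shift is $+3$, forcing every $A$ in each four-symbol monomial of $B$ to be replaced by $A_R$ and the unique $A^*$ by $A^*_D$; the only admissible word is $R^3F_i$, and its coefficient assembles into $\alpha_i\alpha_{i+1}\alpha_{i+2}\bigl(\theta^*_i - (\beta+1)\theta^*_{i+1} + (\beta+1)\theta^*_{i+2} - \theta^*_{i+3}\bigr)$, which regroups as claimed. For (ii), the net shift is $-1$, so exactly one $A^*_L$ and zero $A_R$'s are used in each contributing monomial; the only word is $LF_i$, and the coefficient organizes into the product of $\alpha^*_i(\theta_{i-1}-\theta_i)$ with the quadratic expression $\theta^2_{i-1}-\beta\theta_{i-1}\theta_i+\theta^2_i-\gamma(\theta_{i-1}+\theta_i)-\varrho$. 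For (iii) and (iv), the same procedure produces the words $RF_i, R^2LF_i, LR^2F_i$ (shift $+1$) and $RLF_i, LRF_i$ (shift $0$) respectively, each with the stated coefficient.

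The main obstacle is part (v), where the net shift $+2$ permits the five words $R^2F_i$, $R^3LF_i$, $R^2LRF_i$, $RLR^2F_i$, $LR^3F_i$ to appear. Here one must enumerate, for each of the ten monomials in (\ref{equa1sim}), all ways of distributing $A = A_R + A_D$ and $A^* = A^*_L + A^*_D$ that produce shift $+2$, and then correctly identify the resulting word and track which positions contribute $\theta_k$ or $\theta^*_k$. Organizing the work by word avoids duplication: the word $R^3LF_i$ (resp.\ $LR^3F_i$) arises only from the four-symbol monomials $A^3A^*$ (resp.\ $A^*A^3$); $R^2LRF_i$ only from $A^2A^*A$; $RLR^2F_i$ only from $AA^*A^2$; and $R^2F_i$ collects the remaining contributions, in which exactly two of the $A$'s are raising and the rest are diagonal. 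The coefficient of $R^2F_i$ then unfolds, after simplification using $(\beta+1)$-regrouping of the $\theta^*$--terms and cancellation of the $\gamma,\varrho$ contributions to the higher words, into the stated expression. The remaining four coefficients drop out immediately from a single case each, giving the table in (v).
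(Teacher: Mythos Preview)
Your proposal is correct and follows exactly the approach the paper has in mind: the paper's proof is the single line ``Combine (\ref{12151}), (\ref{12153}), (\ref{12154}), (\ref{leonard1}), (\ref{leonard2}), (\ref{equa1sim}),'' and your decomposition $A=A_R+A_D$, $A^*=A^*_L+A^*_D$ together with the grading-shift bookkeeping is precisely how one carries this out. One small remark: in part (iii) the word $RLRF_i$ does arise from the $A^2A^*A$ and $AA^*A^2$ monomials, but its total coefficient cancels to zero, which is why only $RF_i$, $R^2LF_i$, $LR^2F_i$ survive---you might note this explicitly, and similarly that in (iv) the fully diagonal contributions to $F_i$ cancel.
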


\begin{proof}
Combine  (\ref{12151}), (\ref{12153}), (\ref{12154}), (\ref{leonard1}), (\ref{leonard2}), (\ref{equa1sim}).
\end{proof}

\begin{lemma}\label{lem:bstartri}
In the algebra $T$,
\begin{enumerate}
\item[\rm (i)]
for $3\leq i\leq N$, $F_{i-3}B^*F_i$ is equal to $\alpha^*_{i}\alpha^*_{i-1}\alpha^*_{i-2}
\bigl(\theta_i-\theta_{i-3}-(\beta+1)(\theta_{i-1}-\theta_{i-2})\bigr)$ times $L^3F_i$,

\item[\rm (ii)]
for $0\leq i\leq N-1$, $F_{i+1}B^*F_i$ is equal to $\alpha_{i}(\theta^*_{i+1}-\theta^*_i)(\theta^{*2}_{i+1}
-\beta\theta^*_{i+1}\theta^*_i+\theta^{*2}_i-\gamma^*(\theta^*_{i+1}+\theta^*_i)
-\varrho^*)$ times $RF_i$,

  \item[\rm (iii)]
for  $1\leq i\leq N$,  $F_{i-1}B^*F_i$ is a weighted sum involving the following terms and coefficients:

\medskip
\centerline{
\begin{tabular}[t]{c|l}\label{fr1f}
{\rm term} & {\rm coefficient}
   \\  \hline
$LF_i$ & $\alpha^*_i(\theta_i-\theta_{i-1})\bigl(\theta^{*2}_{i}
-\beta\theta^*_{i}\theta^*_{i-1}+\theta^{*2}_{i-1}-\gamma^*(\theta^*_{i}+\theta^*_{i-1})
-\varrho^*\bigr)$
  \\
$L^2RF_i$ & $\alpha_i\alpha^*_{i+1}\alpha^*_i(\theta^*_{i-1}-\beta \theta^*_i
+\theta^*_{i+1}-\gamma^*)$
   \\
 $RL^2F_i$ &  $\alpha^*_i\alpha^*_{i-1}\alpha_{i-2}(-\theta^*_{i}+\beta \theta^*_{i-1}-\theta^*_{i-2}+\gamma^*)$
 \\
\end{tabular}}

\item[\rm (iv)]
for  $0\leq i\leq N$, $F_{i}B^*F_i$ is a weighted sum involving the following terms and  coefficients:

\centerline{
\begin{tabular}[t]{c|l}\label{fr0f}
{\rm term} & {\rm coefficient}
   \\  \hline
$RLF_i$ & $-\alpha^*_i\alpha_{i-1}\bigl(\theta^{*2}_{i-1}
-\beta\theta^*_{i-1}\theta^*_{i}+\theta^{*2}_{i}-\gamma^*(\theta^*_{i-1}+\theta^*_{i})
-\varrho^*\bigr)$
  \\
$LRF_i$ & $\alpha^*_{i+1}\alpha_i\bigl(\theta^{*2}_i
-\beta\theta^*_i\theta^*_{i+1}+\theta^{*2}_{i+1}-\gamma^*(\theta^*_i+\theta^*_{i+1})
-\varrho^*\bigr)$
 \\
\end{tabular}}

\item[\rm (v)]
for  $2\leq i\leq N$, $F_{i-2}B^*F_i$ is a weighted sum involving the following terms and coefficients:

\medskip
\centerline{
\begin{tabular}[t]{c|l}
{\rm term} & {\rm coefficient}
   \\  \hline
$L^2F_i$ & $\small{\alpha^*_i\alpha^*_{i-1}\Big((\theta_i-\theta_{i-2})(\theta^*_{i}
+\theta^*_{i-1}+\theta^*_{i-2}-\gamma^*)
-(\beta+1)\bigl((\theta_{i-1}-\theta_{i-2})\theta^*_{i-2}
+(\theta_i-\theta_{i-1})\theta^*_i\bigr)\Big)}$
  \\
$L^3RF_i$ & $\alpha^*_i\alpha^*_{i-1}\alpha^*_{i+1}\alpha_i$
   \\
 $L^2RLF_i$ & $-(\beta+1)\alpha^{*2}_i\alpha^*_{i-1}\alpha_{i-1}$
 \\
$LRL^2F_i$ & $(\beta+1)\alpha^*_i\alpha^{*2}_{i-1}\alpha_{i-2}$
\\
 $RL^3F_i$ & $-\alpha^*_i\alpha^*_{i-1}\alpha^*_{i-2}\alpha_{i-3}$
\end{tabular}}
\end{enumerate}

\end{lemma}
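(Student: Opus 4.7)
The plan is to compute each $F_i B^* F_j$ directly, in the style of Lemma~\ref{lem:btri}. By Lemma~\ref{lem:1602241}(ii), nonzero $F_iB^*F_j$ can occur only for $j-i\in\{-1,0,1,2,3\}$, which gives exactly the five cases (i)--(v). I would substitute (\ref{leonard1}) and (\ref{leonard2}) into the expanded form (\ref{equa2sim}) of $B^*$, so that each resulting term becomes a product of at most three factors of the form $\alpha_k RF_k$, $\alpha^*_k LF_k$, $\theta_k F_k$, or $\theta^*_k F_k$. Using (\ref{12151}), (\ref{12153}), (\ref{12154}), each such product can be rewritten so that all of its $F_k$'s are collected at the right, with the index forced by the net raising/lowering degree to match the outer $F_i$ on the right. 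Only terms whose degree difference (lowerings minus raisings) equals $j-i$ survive; reading off those monomials and grouping by types like $L^3F_i$, $RF_i$, $L^2RF_i$, etc., produces the tables in each part.

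A convenient shortcut is the transpose symmetry. Using $R^t=L$, $F_k^t=F_k$, together with $F_kR=RF_{k-1}$ and $F_kL=LF_{k+1}$, one checks that $A^t$ has the form of $A^*$ with parameters $\alpha^*_j\leftarrow \alpha_{j-1}$, $\theta^*_j\leftarrow \theta_j$, while $(A^*)^t$ has the form of $A$ with parameters $\alpha_j\leftarrow \alpha^*_{j+1}$, $\theta_j\leftarrow \theta^*_j$. Combined with the identity $[X,Y]^t=-[X^t,Y^t]$, transposing each of the five parts of Lemma~\ref{lem:btri} yields the corresponding identity of the present lemma after the above parameter relabeling and the additional substitution $\gamma\leftarrow\gamma^*$, $\varrho\leftarrow\varrho^*$ dictated by comparing (\ref{equa1sim}) with (\ref{equa2sim}).

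The main obstacle is purely index bookkeeping: the shifts between $\alpha_{j-1}$ and $\alpha^*_{j+1}$ produced by moving projections past $R$ or $L$ must be tracked consistently, and each monomial on the $B$-side has to be paired with the correct monomial on the $B^*$-side (for instance, $R^3LF_i$ in Lemma~\ref{lem:btri}(v) should correspond to $L^3RF_i$ in part (v) here, and $LR^3F_i$ to $RL^3F_i$). Once this dictionary is laid out, no new algebraic input is required beyond what already appears in the proof of Lemma~\ref{lem:btri}, and parts (i)--(v) are read off term by term.
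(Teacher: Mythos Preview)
Your proposal is correct, and your first paragraph is exactly the paper's proof: it simply says ``Combine (\ref{12151}), (\ref{12153}), (\ref{12154}), (\ref{leonard1}), (\ref{leonard2}), (\ref{equa2sim}),'' i.e., substitute the definitions of $A$, $A^*$ into the expanded form of $B^*$ and push all projections to the right using the commutation rules, precisely as was done for Lemma~\ref{lem:btri}.

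Your transpose shortcut in the second and third paragraphs is a genuine alternative that the paper does not use. It is valid: since $(A^*)^t$ and $A^t$ have the same shapes as $A$ and $A^*$ respectively (with the parameter shifts you indicate), and since transposing (\ref{equa1sim}) with $\gamma\to\gamma^*$, $\varrho\to\varrho^*$ yields $-$(\ref{equa2sim}), one can read off each row of Lemma~\ref{lem:bstartri} from the corresponding row of Lemma~\ref{lem:btri} after relabeling. This buys you a mechanical derivation with no new expansion, at the cost of the index bookkeeping you already flagged (e.g., $R^3LF_i \leftrightarrow L^3RF_i$, $\alpha_{j-1}\leftrightarrow\alpha^*_j$). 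The paper's direct route avoids that bookkeeping by simply repeating the same kind of computation; either approach is adequate here since the underlying algebra is identical.
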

\begin{proof}
Combine  (\ref{12151}), (\ref{12153}), (\ref{12154}), (\ref{leonard1}), (\ref{leonard2}), (\ref{equa2sim}).
\end{proof}

At the beginning of Section 11, we defined some scalars $\{\theta_i\}^N_{i=0}$, $\{\theta^*_i\}^N_{i=0}$ and
$\beta, \gamma, \gamma^*, \varrho, \varrho^*$. As we proceed, we often make an assumption about these
scalars called the {\em standard assumption}.

\begin{definition}\label{def:0427}
{\rm Under the standard assumption,
\begin{enumerate}
\item[\rm(i)]
$
\beta+1=\frac{\theta_{i-2}-\theta_{i+1}}{\theta_{i-1}-\theta_i}
\qquad\qquad\qquad (2\leq i\leq N-1),
$
\item[\rm(ii)]
$
\beta+1
=\frac{\theta^*_{i-2}-\theta^*_{i+1}}{\theta^*_{i-1}-\theta^*_i}\qquad\qquad\qquad (2\leq i\leq N-1),
$

\item[\rm(iii)] $\gamma=\theta_{i-1}-\beta \theta_i+\theta_{i+1}\qquad\qquad\qquad (1\leq i\leq N-1),$

\item[\rm(iv)] $\gamma^*=\theta^*_{i-1}-\beta \theta^*_i+\theta^*_{i+1}\qquad\qquad\qquad (1\leq i\leq N-1),$

\item[\rm(v)] $\varrho=\theta^2_{i-1}-\beta \theta_{i-1}\theta_i+\theta^2_{i}-
\gamma (\theta_{i-1}+\theta_i)\qquad\qquad\qquad (1\leq i\leq N),$

\item[\rm(vi)] $\varrho^*=\theta^{*2}_{i-1}-\beta \theta^*_{i-1}\theta^*_i+\theta^{*2}_{i}
-\gamma^* (\theta^*_{i-1}+\theta^*_i)\qquad\qquad\qquad (1\leq i\leq N).$
\end{enumerate}}
\end{definition}

\begin{lemma}\label{lem:parasatisfy}
Assume $B=0$ and $B^*=0$. Then the standard assumption holds.
\end{lemma}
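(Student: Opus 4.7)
The plan is to derive the six identities of Definition~\ref{def:0427} from the block decomposition provided by Lemma~\ref{lem:fbf}: both $B=0$ and $B^*=0$ are equivalent to the vanishing of every $F_iBF_j$ and $F_iB^*F_j$. By Lemma~\ref{lem:1602241} only a small band of indices contributes, and Lemmas~\ref{lem:btri}, \ref{lem:bstartri} compute each relevant block explicitly. The key observation is that four of these blocks are already displayed as a single scalar times a nonzero matrix, so each immediately yields a scalar equation; the remaining two identities (iii), (iv) then follow simply by comparing consecutive instances of (v), (vi).

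First I would read off (ii) from Lemma~\ref{lem:btri}(i): for $0\leq i\leq N-3$ the block $F_{i+3}BF_i$ equals $\alpha_i\alpha_{i+1}\alpha_{i+2}\bigl(\theta^*_i-\theta^*_{i+3}-(\beta+1)(\theta^*_{i+1}-\theta^*_{i+2})\bigr)R^3F_i$. The factors $\alpha_i,\alpha_{i+1},\alpha_{i+2}$ are nonzero by (\ref{alph0}), the matrix $R^3F_i$ is nonzero by Lemma~\ref{lem:firkfj}, and $\theta^*_{i+1}\neq\theta^*_{i+2}$ by (\ref{thetasatisfy}); hence the remaining scalar vanishes, and reindexing gives (ii). The mirror argument applied to $F_{i-3}B^*F_i$ via Lemma~\ref{lem:bstartri}(i) yields (i). Next, Lemma~\ref{lem:btri}(ii) expresses $F_{i-1}BF_i$, for $1\leq i\leq N$, as $\alpha^*_i(\theta_{i-1}-\theta_i)\bigl(\theta^2_{i-1}-\beta\theta_{i-1}\theta_i+\theta^2_i-\gamma(\theta_{i-1}+\theta_i)-\varrho\bigr)LF_i$; since $\alpha^*_i\neq 0$, $\theta_{i-1}\neq\theta_i$, and $LF_i\neq 0$, the quadratic factor must vanish, which is exactly (v). The dual treatment of $F_{i+1}B^*F_i$ via Lemma~\ref{lem:bstartri}(ii) delivers (vi).

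Identities (iii) and (iv) follow with no further matrix input. Subtracting (v) at the two adjacent indices $i$ and $i+1$ (both lying in $[1,N]$, i.e.\ for $1\leq i\leq N-1$) yields
\[
(\theta_{i-1}-\theta_{i+1})\bigl(\theta_{i-1}+\theta_{i+1}-\beta\theta_i-\gamma\bigr)=0.
\]
Distinctness of the $\theta_j$ in (\ref{thetasatisfy}) gives $\theta_{i-1}-\theta_{i+1}\neq 0$, so the second factor vanishes, which is precisely (iii). Repeating this manoeuvre with (vi) in place of (v) produces (iv).

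There is essentially no substantive obstacle in this route; the pitfall to avoid is attempting to extract (iii) or (iv) directly from the longer blocks $F_{i+1}BF_i$ or $F_{i+2}BF_i$ of Lemma~\ref{lem:btri}, where one would first have to reduce products like $R^2LR$ and $RLR^2$ via Lemma~\ref{lem:1603031} and then invoke the linear independence statements in Lemmas~\ref{lem:rli311}, \ref{lem:rli0} to isolate individual coefficients. Routing (iii), (iv) through two consecutive instances of (v), (vi) circumvents all of this, relying only on the scalar distinctness already assumed in (\ref{thetasatisfy}).
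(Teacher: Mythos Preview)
Your proposal is correct and follows essentially the same route as the paper: the paper also derives (ii) and (v) from Lemma~\ref{lem:btri}(i),(ii), derives (i) and (vi) from Lemma~\ref{lem:bstartri}(i),(ii), and then obtains (iii) from (v) and (iv) from (vi). Your write-up is in fact more explicit than the paper's, which simply says ``Using (v), one gets (iii)'' without displaying the factorization $(\theta_{i-1}-\theta_{i+1})(\theta_{i-1}+\theta_{i+1}-\beta\theta_i-\gamma)=0$.
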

\begin{proof} We Refer to Definition~\ref{def:0427}.
To get (ii),  (v)
combine  Lemma~\ref{lem:fbf},  Lemma~\ref{lem:btri}(i), (ii) along with (\ref{alph0}), (\ref{alph}), (\ref{thetasatisfy}). Using (v), one gets (iii).
To get (i), (vi), combine  Lemma~\ref{lem:fbf}, Lemma~\ref{lem:bstartri}(i), (ii) along with (\ref{alph0}), (\ref{alph}), (\ref{thetasatisfy}).
 Using (vi), one gets (iv).
\end{proof}

For notational convenience  define
$$
\heartsuit_i=(\beta+1)\bigl((\theta^*_i-\theta^*_{i+2})(\theta_{i+1}-\theta_i)
+(\theta^*_{i+1}-\theta^*_{i+2})(\theta_i-\theta_{i+2})\bigr)\qquad\qquad (0\leq i\leq N-2).
$$
\begin{lemma}\label{lem:1602051}
Under the standard assumption, the following hold.
\begin{description}
  \item[\rm(i)] For $0\leq i\leq N-2$, the scalar $\heartsuit_i$ is equal to the expression
  \begin{equation}\label{201602101}
  (\theta^*_i-\theta^*_{i+2})(\theta_{i}
+\theta_{i+1}+\theta_{i+2}-\gamma)
-(\beta+1)\bigl((\theta^*_{i+1}-\theta^*_{i+2})\theta_{i+2}
+(\theta^*_i-\theta^*_{i+1})\theta_i\bigr)
\end{equation}
  that appears in the table from Lemma~\ref{lem:btri}{\rm(v)}.
  \item[\rm(ii)] For $2\leq i\leq N$, the scalar $-\heartsuit_{i-2}$ is equal to the expression
\begin{equation}\label{201602102}
  (\theta_i-\theta_{i-2})(\theta^*_{i}
+\theta^*_{i-1}+\theta^*_{i-2}-\gamma^*)
-(\beta+1)\bigl((\theta_{i-1}-\theta_{i-2})\theta^*_{i-2}
+(\theta_i-\theta_{i-1})\theta^*_i\bigr)
\end{equation}
that appears in the table from Lemma~\ref{lem:bstartri}{\rm(v)}.
\end{description}
\end{lemma}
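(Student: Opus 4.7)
The plan is to verify both identities by direct algebraic manipulation, using the standard assumption to eliminate the scalars $\gamma$ and $\gamma^*$ from the expressions on the right. The key observation is that Definition~\ref{def:0427}(iii)--(iv) furnishes three-term recurrences of the form $\gamma = \theta_{j-1}-\beta\theta_j+\theta_{j+1}$ and $\gamma^* = \theta^*_{j-1}-\beta\theta^*_j+\theta^*_{j+1}$, so the sum of three consecutive eigenvalues minus $\gamma$ collapses to a single middle term multiplied by $(\beta+1)$.

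For (i), I would apply Definition~\ref{def:0427}(iii) at index $j=i+1$, which is legal since $i\in\{0,\dots,N-2\}$ forces $i+1\in\{1,\dots,N-1\}$. This gives $\gamma = \theta_i-\beta\theta_{i+1}+\theta_{i+2}$, so
\[
\theta_i+\theta_{i+1}+\theta_{i+2}-\gamma = (\beta+1)\theta_{i+1}.
\]
Substituting this into (\ref{201602101}) reduces that expression to $(\beta+1)$ times
\[
(\theta^*_i-\theta^*_{i+2})\theta_{i+1}-(\theta^*_{i+1}-\theta^*_{i+2})\theta_{i+2}-(\theta^*_i-\theta^*_{i+1})\theta_i.
\]
On the other side, expanding the defining products of $\heartsuit_i$ and grouping the terms in $\theta_i,\theta_{i+1},\theta_{i+2}$ yields exactly the same bracket (the coefficient of $\theta_i$ simplifies to $-(\theta^*_i-\theta^*_{i+1})$ after the difference $-(\theta^*_i-\theta^*_{i+2})+(\theta^*_{i+1}-\theta^*_{i+2})$ is collapsed), so both sides coincide.

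For (ii), the argument is parallel with the roles of starred and unstarred variables exchanged. Apply Definition~\ref{def:0427}(iv) at index $j=i-1$, which is legal for $i\in\{2,\dots,N\}$, to obtain $\gamma^* = \theta^*_{i-2}-\beta\theta^*_{i-1}+\theta^*_i$ and hence
\[
\theta^*_{i-2}+\theta^*_{i-1}+\theta^*_i-\gamma^* = (\beta+1)\theta^*_{i-1}.
\]
Substituting into (\ref{201602102}) reduces it to $(\beta+1)$ times
\[
(\theta_i-\theta_{i-2})\theta^*_{i-1}-(\theta_{i-1}-\theta_{i-2})\theta^*_{i-2}-(\theta_i-\theta_{i-1})\theta^*_i,
\]
and expanding $-\heartsuit_{i-2}$ (i.e.\ substituting $i-2$ for $i$ in the definition of $\heartsuit$ and negating) and grouping in $\theta^*_{i-2},\theta^*_{i-1},\theta^*_i$ produces the same bracket.

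The main obstacle is nothing more than careful bookkeeping: both identities reduce to a polynomial identity in the six eigenvalues $\theta_{i-2},\theta_{i-1},\theta_i,\theta_{i+1},\theta_{i+2}$ (as needed) together with their starred counterparts and $\beta$, verified by a term-by-term match of the bilinear cross-terms after the common factor $(\beta+1)$ is pulled out. Note that the argument makes no division by $\beta+1$, so the degenerate case $\beta=-1$ needs no separate treatment.
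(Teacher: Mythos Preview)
Your proposal is correct and follows essentially the same approach as the paper's own proof: substitute the relevant standard-assumption identity for $\gamma$ (resp.\ $\gamma^*$) into the displayed expression, extract the common factor $(\beta+1)$, and match the remaining bilinear bracket against the expansion of $\heartsuit_i$ (resp.\ $-\heartsuit_{i-2}$). The paper cites parts (i),(iii) (resp.\ (ii),(iv)) of the standard assumption, but as your argument shows, part (iii) (resp.\ (iv)) alone already suffices.
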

\begin{proof}
(i) Evaluate (\ref{201602101}) using  Lemma~\ref{lem:parasatisfy}(i), (iii) and simplify the result.

\noindent
(ii) Evaluate (\ref{201602102}) using  Lemma~\ref{lem:parasatisfy}(ii), (iv) and simplify the result.
\end{proof}
\begin{lemma}\label{lem:1602021}
Under the standard assumption,     for $0\leq i\leq N-2$,
$F_{i+2}BF_i$ is a weighted sum  involving the following terms and coefficients:

\medskip
\centerline{
\begin{tabular}[t]{c|l}
{\rm term} & {\rm coefficient}
   \\  \hline
$R^2F_i$ & $-\alpha_{i}\alpha_{i+1}\bigl(q^{N+M-i-2}(q+1)(\beta+1)(\alpha_{i}\alpha^*_{i+1}-\alpha_{i+1}\alpha^*_{i+2})-\heartsuit_i\bigr)$
   \\
$R^3LF_i$ & $\alpha_{i}\alpha_{i+1}\bigl(\alpha_{i-1}\alpha^*_{i}-\frac{q(q^2-1)}{q^3-1}(\beta+1)\alpha_{i}\alpha^*_{i+1}+\frac{q^2(q-1)}{q^3-1}(\beta+1)\alpha_{i+1}\alpha^*_{i+2}\bigr)$
 \\
$LR^3F_i$ & $-\alpha_{i}\alpha_{i+1}\bigl(\frac{q-1}{q^3-1}(\beta+1)\alpha_{i}\alpha^*_{i+1}-\frac{q^2-1}{q^3-1}(\beta+1)\alpha_{i+1}\alpha^*_{i+2}+\alpha_{i+2}\alpha^*_{i+3}\bigr)$
 \end{tabular}}
 \medskip
 \noindent
 For $i=0$, the term $R^3LF_i=0$ and so the second row of the above table is eliminated.
 For $i=N-2$, the term $LR^3F_i=0$ and so the third row of the above table is eliminated.
\end{lemma}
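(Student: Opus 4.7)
The plan is to start from the five-term expression for $F_{i+2}BF_i$ given in Lemma~\ref{lem:btri}(v) and reduce it to a three-term expression by eliminating $R^2LRF_i$ and $RLR^2F_i$ using the identities (\ref{201603031}), (\ref{201603032}) from Lemma~\ref{lem:1603031}. Concretely, I would multiply those two identities on the right by $F_i$ and rewrite
\[
R^2LRF_i = q^{N+M-i-2}(q+1)R^2F_i + \frac{q(q^2-1)R^3LF_i+(q-1)LR^3F_i}{q^3-1},
\]
\[
RLR^2F_i = q^{N+M-i-2}(q+1)R^2F_i + \frac{q^2(q-1)R^3LF_i+(q^2-1)LR^3F_i}{q^3-1},
\]
where I used $KF_i = q^{N+M-i}F_i$ to convert $R^2KF_i$ into a scalar multiple of $R^2F_i$.

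Next I would substitute these back into the formula from Lemma~\ref{lem:btri}(v) and collect the coefficients of $R^2F_i$, $R^3LF_i$, $LR^3F_i$. For the $R^2F_i$ coefficient, the original term (by Lemma~\ref{lem:1602051}(i), which applies under the standard assumption) simplifies to $\alpha_i\alpha_{i+1}\heartsuit_i$, and the contributions coming from the $R^2K$ parts of the substitutions combine to $q^{N+M-i-2}(q+1)(\beta+1)\alpha_i\alpha_{i+1}(\alpha_{i+1}\alpha^*_{i+2}-\alpha_i\alpha^*_{i+1})$, giving the stated expression after factoring out a minus sign. For the $R^3LF_i$ and $LR^3F_i$ coefficients, the only bookkeeping is to combine the original contribution from Lemma~\ref{lem:btri}(v) with the new contributions weighted by the rational factors $\frac{q(q^2-1)}{q^3-1}$, $\frac{q-1}{q^3-1}$, $\frac{q^2(q-1)}{q^3-1}$, $\frac{q^2-1}{q^3-1}$ appearing above.

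Finally, I would address the two boundary cases. For $i=0$, the relation $LF_0=0$ from (\ref{12154}) gives $R^3LF_0=0$, so the second row drops out. For $i=N-2$, one has $R^2F_{N-2}=F_NR^2$ by (\ref{12153}), hence $R^3F_{N-2}=RF_NR^2=0$ and consequently $LR^3F_{N-2}=0$, so the third row drops out.

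The proof is essentially a controlled bookkeeping exercise; no conceptual difficulty arises once Lemmas~\ref{lem:1603031} and \ref{lem:1602051} are in hand. The main place to be careful is the simplification of the $R^2F_i$ coefficient, since one must correctly combine the term already present in Lemma~\ref{lem:btri}(v) (which requires the standard assumption via Lemma~\ref{lem:1602051}(i) to be rewritten in terms of $\heartsuit_i$) with the scalar multiple of $R^2F_i$ produced by the $R^2K$ parts of the two substitutions, and then factor cleanly to reach the advertised form $-\alpha_i\alpha_{i+1}\bigl(q^{N+M-i-2}(q+1)(\beta+1)(\alpha_i\alpha^*_{i+1}-\alpha_{i+1}\alpha^*_{i+2})-\heartsuit_i\bigr)$.
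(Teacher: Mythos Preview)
Your proposal is correct and follows essentially the same approach as the paper: the paper also multiplies (\ref{201603031}) and (\ref{201603032}) on the right by $F_i$ (using $KF_i=q^{N+M-i}F_i$) to rewrite $R^2LRF_i$ and $RLR^2F_i$, substitutes these into the five-term expansion from Lemma~\ref{lem:btri}(v), and then invokes Lemma~\ref{lem:1602051}(i) to identify the $R^2F_i$ coefficient with $\heartsuit_i$. Your treatment of the boundary cases $i=0$ and $i=N-2$ is also correct and slightly more explicit than the paper, which simply records those facts in the statement.
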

\begin{proof}
 By (\ref{201603031}), (\ref{201603032}) we obtain
\begin{eqnarray}
& &R^2LRF_i=q^{N+M-i-2}(q+1)R^2F_i+\frac{q(q^2-1)R^3LF_i+(q-1)LR^3F_i}{q^3-1},\label{201603037}\\
& &RLR^2F_i=q^{N+M-i-2}(q+1)R^2F_i+\frac{q^2(q-1)R^3LF_i+(q^2-1)LR^3F_i}{q^3-1}.\label{201603038}
\end{eqnarray}
Consider the weighted sum description of $F_{i+2}BF_i$  in Lemma~\ref{lem:btri}(v).
Simplify this description by eliminating $R^2LRF_i$  using (\ref{201603037})
and  $RLR^2F_i$  using (\ref{201603038}). Combining this with Lemma~\ref{lem:1602051}(i),  we get the result.
\end{proof}
Referring to Lemma~\ref{lem:1602021}, for $i=0$ we obtain the following simplification.
\begin{lemma}\label{lem:1603032}
Under the standard assumption, the matrix $F_2BF_0$ is equal to $-\alpha_0\alpha_1q^MR^2F_0$ times the scalar
\begin{eqnarray*}\label{201603035}
\frac{q^N-1}{q-1}(\beta+1)\alpha_0\alpha^*_1-\frac{q^{N-1}-1}{q-1}~\frac{q^2-1}{q-1}(\beta+1)\alpha_1\alpha^*_2
+\frac{q^{N-2}-1}{q-1}~\frac{q^3-1}{q-1}\alpha_2\alpha^*_3-q^{-M}\heartsuit_0.
\end{eqnarray*}
\end{lemma}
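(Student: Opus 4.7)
The plan is to start from the weighted-sum description of $F_2BF_0$ provided by Lemma~\ref{lem:1602021} specialized to $i=0$, and then reduce to a single term in $R^2F_0$ by exploiting Lemma~\ref{lem:1602271}. Since $F_0R=0$ by (\ref{12153}), the term $R^3LF_0$ vanishes, so the weighted sum contains only the terms $R^2F_0$ and $LR^3F_0$. The coefficient of $R^2F_0$ is $-\alpha_0\alpha_1\bigl(q^{N+M-2}(q+1)(\beta+1)(\alpha_0\alpha^*_1-\alpha_1\alpha^*_2)-\heartsuit_0\bigr)$, and the coefficient of $LR^3F_0$ is $-\alpha_0\alpha_1\bigl(\tfrac{q-1}{q^3-1}(\beta+1)\alpha_0\alpha^*_1-\tfrac{q^2-1}{q^3-1}(\beta+1)\alpha_1\alpha^*_2+\alpha_2\alpha^*_3\bigr)$.

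The next step is to replace $LR^3F_0$ by $(q-1)^{-2}q^M(q^3-1)(q^{N-2}-1)R^2F_0$, which is the content of (\ref{201602271}). After this substitution, $F_2BF_0$ becomes a scalar multiple of $R^2F_0$. Factor out $-\alpha_0\alpha_1$ and $q^M$ from the remaining expression; the goal is to show that the surviving scalar is exactly the one in the statement. Concretely, the $\alpha_0\alpha^*_1$-coefficient will merge as $q^{N-2}(q+1)+\tfrac{q^{N-2}-1}{q-1}=\tfrac{q^N-1}{q-1}$, the $\alpha_1\alpha^*_2$-coefficient as $(q+1)\bigl(q^{N-2}+\tfrac{q^{N-2}-1}{q-1}\bigr)=\tfrac{q^2-1}{q-1}\cdot\tfrac{q^{N-1}-1}{q-1}$, and the $\alpha_2\alpha^*_3$-coefficient as $\tfrac{q^3-1}{q-1}\cdot\tfrac{q^{N-2}-1}{q-1}$. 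The constant term $\heartsuit_0$ (not containing the $q^M$ factor from Lemma~\ref{lem:1602271}) contributes $-q^{-M}\heartsuit_0$ once $q^M$ is pulled out.

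At this point, the only subtlety is the identification of the ``constant'' term in the $R^2F_0$ coefficient from Lemma~\ref{lem:1602021}. By Lemma~\ref{lem:1602051}(i) we are entitled, under the standard assumption, to write that constant in the compact form $\heartsuit_0$; in Lemma~\ref{lem:1602021} it was already written as a coefficient involving $\heartsuit_0$, so no further work is needed there.

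The main obstacle is simply bookkeeping: organizing the telescoping of the two $(\beta+1)\alpha_0\alpha^*_1$ contributions and the two $(\beta+1)\alpha_1\alpha^*_2$ contributions so that the polynomial factors collapse cleanly into the Gaussian-style fractions $\tfrac{q^N-1}{q-1}$, $\tfrac{q^{N-1}-1}{q-1}$, $\tfrac{q^{N-2}-1}{q-1}$. Once the common factor $q^M$ is extracted, the asserted identity follows immediately.
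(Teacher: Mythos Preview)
Your approach is exactly the paper's: specialize Lemma~\ref{lem:1602021} to $i=0$, kill $R^3LF_0$, substitute $LR^3F_0$ via (\ref{201602271}), and collect. The algebra you outline is correct. One small correction: the vanishing of $R^3LF_0$ is because $LF_0=0$ from (\ref{12154}), not because $F_0R=0$; the latter is irrelevant to $R^3LF_0$. With that fixed, the argument is complete.
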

\begin{proof}
In Lemma~\ref{lem:1602021}, set $i=0$ to obtain $F_2BF_0$ as a weighted sum. In this weighted sum, eliminate $LR^3F_0$ using (\ref{201602271})
and simplify the result.
\end{proof}

\begin{lemma}\label{lem:1602029}
Under the standard assumption,
 for $2\leq i\leq N$,
$F_{i-2}B^*F_i$ is a weighted sum  involving the following terms and coefficients:

\medskip
\centerline{
\begin{tabular}[t]{c|l}
{\rm term} & {\rm coefficient}
   \\  \hline
$L^2F_i$ & $-\alpha^*_{i-1}\alpha^*_i\bigl(q^{N+M-i}(q+1)(\beta+1)(\alpha_{i-1}\alpha^*_{i}-\alpha_{i-2}\alpha^*_{i-1})+\heartsuit_{i-2}\bigr)$
   \\
$RL^3F_i$ & $-\alpha^*_{i-1}\alpha^*_i\bigl(\alpha_{i-3}\alpha^*_{i-2}-\frac{q(q^2-1)}{q^3-1}(\beta+1)\alpha_{i-2}\alpha^*_{i-1}+\frac{q^2(q-1)}{q^3-1}(\beta+1)
\alpha_{i-1}\alpha^*_{i}\bigr)$
 \\
$L^3RF_i$ & $\alpha^*_{i-1}\alpha^*_i\bigl(\frac{q-1}{q^3-1}(\beta+1)\alpha_{i-2}\alpha^*_{i-1}-\frac{q^2-1}{q^3-1}(\beta+1)\alpha_{i-1}\alpha^*_{i}+
\alpha_{i}\alpha^*_{i+1}\bigr)$
 \end{tabular}}
 \medskip
 \noindent
 For $i=2$, the term $RL^3F_i=0$ and so the second row of the above table is eliminated.
 For $i=N$, the term $L^3RF_i=0$ and so the third row of the above table is eliminated.
\end{lemma}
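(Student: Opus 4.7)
The plan is to mirror the proof of Lemma~\ref{lem:1602021} with the roles of $R$ and $L$ interchanged, using the transpose-symmetric pair of cubic identities from Lemma~\ref{lem:1603031}, together with the $B^*$-analogue of the heart simplification.

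First I would start from the weighted sum description of $F_{i-2}B^*F_i$ given in Lemma~\ref{lem:bstartri}(v), which expresses $F_{i-2}B^*F_i$ in terms of the five matrices $L^2F_i$, $L^3RF_i$, $L^2RLF_i$, $LRL^2F_i$, $RL^3F_i$. The coefficients on $L^3RF_i$, $L^2RLF_i$, $LRL^2F_i$, $RL^3F_i$ are already of the desired form up to a common factor, while the coefficient on $L^2F_i$ is the bracketed expression of (\ref{201602102}).

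Next, I would eliminate the two ``middle'' terms $L^2RLF_i$ and $LRL^2F_i$. Applying $F_i$ on the right of equations (\ref{201603034}) and (\ref{201603033}) and using $KF_i = q^{N+M-i}F_i$ from (\ref{k}) yields
\begin{eqnarray*}
L^2RLF_i &=& q^{N+M-i}(q+1)L^2F_i+\frac{q^2(q-1)RL^3F_i+(q^2-1)L^3RF_i}{q^3-1},\\
LRL^2F_i &=& q^{N+M-i}(q+1)L^2F_i+\frac{q(q^2-1)RL^3F_i+(q-1)L^3RF_i}{q^3-1}.
\end{eqnarray*}
Substituting these into the weighted sum from Lemma~\ref{lem:bstartri}(v) expresses $F_{i-2}B^*F_i$ purely in terms of $L^2F_i$, $RL^3F_i$, $L^3RF_i$. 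Collecting the coefficients on $RL^3F_i$ and $L^3RF_i$ is then a direct computation matching the claimed expressions (with the sign coming from the $-(\beta+1)$ coefficient on $L^2RLF_i$ and the $+(\beta+1)$ coefficient on $LRL^2F_i$, and the overall factor $-\alpha^*_{i-1}\alpha^*_i$ emerging from the common factor $\alpha^*_i\alpha^*_{i-1}$ in Lemma~\ref{lem:bstartri}(v) together with signs).

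Finally, for the coefficient of $L^2F_i$ I would combine the contribution $\alpha^*_i\alpha^*_{i-1}$ times (\ref{201602102}) from Lemma~\ref{lem:bstartri}(v) with the extra contribution $-(\beta+1)q^{N+M-i}(q+1)\alpha^*_i\alpha^*_{i-1}(\alpha_{i-1}\alpha^*_i - \alpha_{i-2}\alpha^*_{i-1})$ coming from the two substitutions above (the $+(\beta+1)\alpha^*_i\alpha^{*2}_{i-1}\alpha_{i-2}$ and $-(\beta+1)\alpha^{*2}_i\alpha^*_{i-1}\alpha_{i-1}$ terms each produce a $q^{N+M-i}(q+1)L^2F_i$ piece). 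By Lemma~\ref{lem:1602051}(ii), the expression (\ref{201602102}) equals $-\heartsuit_{i-2}$ under the standard assumption, so the total coefficient of $L^2F_i$ collapses to $-\alpha^*_{i-1}\alpha^*_i\bigl(q^{N+M-i}(q+1)(\beta+1)(\alpha_{i-1}\alpha^*_i - \alpha_{i-2}\alpha^*_{i-1}) + \heartsuit_{i-2}\bigr)$, as required.

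The step I expect to be the main obstacle is purely bookkeeping: keeping track of the signs and the powers of $q$ so that the coefficient of $L^2F_i$ matches exactly, and verifying that the boundary cases $i=2$ (where $RL^3F_i=0$ because $L^3F_2$ already lowers past $F_0$) and $i=N$ (where $L^3RF_i = L^3RF_N = 0$) correctly eliminate the corresponding rows. These boundary statements follow from (\ref{12154}) and Lemma~\ref{lem:firkfj}.
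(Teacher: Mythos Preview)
Your proposal is correct and is precisely the argument the paper intends: its proof reads ``Similar to the proof of Lemma~\ref{lem:1602021},'' and what you have written is exactly that proof carried out with $R,L$ swapped, using (\ref{201603034}), (\ref{201603033}) in place of (\ref{201603031}), (\ref{201603032}) and invoking Lemma~\ref{lem:1602051}(ii) for the $\heartsuit$-simplification.
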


\begin{proof}
Similar to the proof of Lemma~\ref{lem:1602021}.
\end{proof}

Referring to Lemma~\ref{lem:1602029}, for $i=2$ we obtain the following simplification.
\begin{lemma}\label{lem:1603033}
Under the standard assumption,
the matrix $F_0B^*F_2$ is equal to $\alpha^*_1\alpha^*_2q^ML^2F_2$ times
\begin{eqnarray*}\label{201603039}
\frac{q^N-1}{q-1}(\beta+1)\alpha_0\alpha^*_1-\frac{q^{N-1}-1}{q-1}~\frac{q^2-1}{q-1}(\beta+1)\alpha_1\alpha^*_2
+\frac{q^{N-2}-1}{q-1}~\frac{q^3-1}{q-1}\alpha_2\alpha^*_3-q^{-M}\heartsuit_0.\end{eqnarray*}
\end{lemma}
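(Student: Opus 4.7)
The plan is to mirror the proof of Lemma~\ref{lem:1603032}, exploiting the symmetry between the roles of $R$ and $L$. Since Lemma~\ref{lem:1603032} handled $F_2BF_0$ by specializing Lemma~\ref{lem:1602021} at $i=0$ and eliminating $LR^3F_0$ via (\ref{201602271}), the natural dual move here is to specialize Lemma~\ref{lem:1602029} at $i=2$ and then eliminate $L^3RF_2$ using (\ref{201602272}).

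First I would set $i=2$ in Lemma~\ref{lem:1602029}. As noted there, the term $RL^3F_2$ vanishes, so $F_0B^*F_2$ reduces to a weighted sum of just $L^2F_2$ and $L^3RF_2$. Their coefficients can be read off directly: the coefficient of $L^2F_2$ is $-\alpha^*_1\alpha^*_2\bigl(q^{N+M-2}(q+1)(\beta+1)(\alpha_1\alpha^*_2-\alpha_0\alpha^*_1)+\heartsuit_0\bigr)$, and the coefficient of $L^3RF_2$ is $\alpha^*_1\alpha^*_2\bigl(\frac{q-1}{q^3-1}(\beta+1)\alpha_0\alpha^*_1-\frac{q^2-1}{q^3-1}(\beta+1)\alpha_1\alpha^*_2+\alpha_2\alpha^*_3\bigr)$.

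Next, I would substitute $L^3RF_2 = (q-1)^{-2}q^M(q^3-1)(q^{N-2}-1)L^2F_2$ from (\ref{201602272}), obtaining a scalar multiple of $L^2F_2$. Factoring $\alpha^*_1\alpha^*_2 q^M$ out of the resulting scalar, the $\alpha_2\alpha^*_3$ piece and the $-q^{-M}\heartsuit_0$ piece match the claimed formula immediately, using the cancellation $(q-1)^{-2}(q^3-1)(q^{N-2}-1) = \frac{q^{N-2}-1}{q-1}\cdot\frac{q^3-1}{q-1}$.

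The only remaining work is the algebraic collapse of the coefficients of $\alpha_0\alpha^*_1$ and $\alpha_1\alpha^*_2$. For $\alpha_0\alpha^*_1$ one needs the identity $q^{N-2}(q+1)+\frac{q^{N-2}-1}{q-1} = \frac{q^N-1}{q-1}$, and for $\alpha_1\alpha^*_2$ one needs $q^{N-2}(q+1)(q-1)+\frac{(q^{N-2}-1)(q^2-1)}{q-1} = \frac{(q^{N-1}-1)(q^2-1)}{q-1}$; both reduce to elementary geometric-series manipulations after pulling out a factor of $q+1$. I do not expect a genuine obstacle here — the entire argument is routine bookkeeping parallel to Lemma~\ref{lem:1603032} — but these two identities are the only non-formal step and should be flagged explicitly.
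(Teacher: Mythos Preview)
Your proposal is correct and follows precisely the paper's own argument: specialize Lemma~\ref{lem:1602029} at $i=2$, eliminate $L^3RF_2$ via (\ref{201602272}), and simplify. The two geometric-series identities you isolate are exactly the simplification the paper leaves implicit.
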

\begin{proof}
In Lemma~\ref{lem:1602029}, set $i=2$ to obtain $F_0B^*F_2$ as a weighted sum. In this weighted sum, eliminate  $L^3RF_2$ using (\ref{201602272}) and simplify the result.
\end{proof}

\begin{lemma}\label{lem:20160227ss}
Assume $B=0$ and $B^*=0$. Then  the scalar $q^{-M}\heartsuit_0$ is equal to
\begin{eqnarray*}
\frac{q^N-1}{q-1}(\beta+1)\alpha_0\alpha^*_1-\frac{q^{N-1}-1}{q-1}~\frac{q^2-1}{q-1}(\beta+1)\alpha_1\alpha^*_2
+\frac{q^{N-2}-1}{q-1}~\frac{q^3-1}{q-1}\alpha_2\alpha^*_3.
\end{eqnarray*}
\end{lemma}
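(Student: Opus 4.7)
The plan is essentially to read the asserted identity off as the vanishing condition of $F_2BF_0$, using the factorization already established in Lemma~\ref{lem:1603032}. The proof amounts to putting together two facts: the hypotheses $B=0$, $B^*=0$ activate the standard assumption, and the relevant block $F_2BF_0$ then factors as a nonzero matrix times the very scalar combination in the statement.

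First, from $B=0$ and $B^*=0$, Lemma~\ref{lem:parasatisfy} tells us that the standard assumption of Definition~\ref{def:0427} holds, so Lemma~\ref{lem:1603032} is applicable. That lemma expresses $F_2BF_0$ as the product of $-\alpha_0\alpha_1 q^M R^2F_0$ and the scalar
$$\frac{q^N-1}{q-1}(\beta+1)\alpha_0\alpha^*_1-\frac{q^{N-1}-1}{q-1}\cdot\frac{q^2-1}{q-1}(\beta+1)\alpha_1\alpha^*_2 +\frac{q^{N-2}-1}{q-1}\cdot\frac{q^3-1}{q-1}\alpha_2\alpha^*_3-q^{-M}\heartsuit_0.$$
The prefactor is nonzero: $\alpha_0,\alpha_1\neq 0$ by (\ref{alph0}), $q^M\neq 0$, and $R^2F_0\neq 0$ by Lemma~\ref{lem:firkfj} (with $i=2$, $j=0$, $k=2$). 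Since $B=0$ forces $F_2BF_0=0$, the displayed scalar must vanish. Solving the resulting equation for $q^{-M}\heartsuit_0$ yields the claimed identity.

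No serious obstacle is expected; the heavy lifting was done in Lemmas~\ref{lem:1602021} and \ref{lem:1603032} (where $R^2LRF_i$ and $RLR^2F_i$ were eliminated via the cubic relations and $LR^3F_0$ was eliminated using Lemma~\ref{lem:1602271}). The present lemma is simply the scalar consequence of the resulting factorization once $B=0$ is imposed. A fully symmetric derivation would instead start from $F_0B^*F_2=0$, invoke Lemma~\ref{lem:1603033}, and use $L^2F_2\neq 0$ by Lemma~\ref{lem:firkfj}; the two approaches give the same identity, reflecting the $R\leftrightarrow L$ transpose symmetry of the setup.
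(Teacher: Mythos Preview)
Your proof is correct and follows essentially the same approach as the paper: apply Lemma~\ref{lem:fbf} with $D=B$ to get $F_2BF_0=0$, then invoke Lemma~\ref{lem:1603032} to conclude that the scalar factor vanishes. You are more explicit than the paper in justifying the standard assumption via Lemma~\ref{lem:parasatisfy} and in checking that the prefactor $-\alpha_0\alpha_1 q^M R^2F_0$ is nonzero, but the underlying argument is identical.
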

\begin{proof}
Use Lemma~\ref{lem:fbf} with $D=B$, along with   Lemma~\ref{lem:1603032}.
\end{proof}

\begin{lemma}\label{lem:xidependent}
Assume $B=0$ and $B^*=0$. Then for  $1\leq i\leq N-2$,
\begin{eqnarray}
& &q^{N+M-i-2}(q+1)(\beta+1)(\xi_{i}-\xi_{i+1})-\heartsuit_i=0,\label{labelr2lr}\\
& &\xi_{i-1}-\frac{q(q^2-1)}{q^3-1}(\beta+1)\xi_{i}+\frac{q^2(q-1)}{q^3-1}(\beta+1)\xi_{i+1}=0. \label{labelrlr2}
\end{eqnarray}
Moreover for $1\leq i\leq N-3$,
\begin{eqnarray}\label{xsatisfy1}
\frac{q-1}{q^3-1}(\beta+1)\xi_{i}-\frac{q^2-1}{q^3-1}(\beta+1)\xi_{i+1}+\xi_{i+2}=0.
\end{eqnarray}
\end{lemma}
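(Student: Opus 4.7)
The plan is to apply Lemma~\ref{lem:fbf} with $D=B$, so that $F_{i+2}BF_i=0$ for every $i$ with $0\leq i\leq N-2$. Since $B=0$ and $B^*=0$ force the standard assumption via Lemma~\ref{lem:parasatisfy}, Lemma~\ref{lem:1602021} is available and expresses $F_{i+2}BF_i$ as an explicit linear combination of $R^2F_i$, $R^3LF_i$, $LR^3F_i$ whose coefficients are already packaged in terms of the quantities $\xi_j=\alpha_j\alpha^*_{j+1}$ that appear in the statement. Reading the three bracketed factors off Lemma~\ref{lem:1602021} will, once one can argue they each vanish separately, produce exactly (\ref{labelr2lr}), (\ref{labelrlr2}), (\ref{xsatisfy1}).

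For the interior range $1\leq i\leq N-3$, all three of $R^2F_i$, $R^3LF_i$, $LR^3F_i$ survive in the sum and, by Lemma~\ref{lem:rli0}(i), are linearly independent in $T$. Since $\alpha_i\alpha_{i+1}\neq 0$ by (\ref{alph0}), I may divide out the common factor $\alpha_i\alpha_{i+1}$ and conclude that each of the three bracketed coefficients in Lemma~\ref{lem:1602021} vanishes. These three vanishings are literally (\ref{labelr2lr}), (\ref{labelrlr2}), (\ref{xsatisfy1}) for $1\leq i\leq N-3$.

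For the boundary case $i=N-2$, the remark after Lemma~\ref{lem:1602021} records that $LR^3F_{N-2}=0$, so the sum collapses to a linear combination of $R^2F_{N-2}$ and $R^3LF_{N-2}$ only. These two matrices are linearly independent by Lemma~\ref{lem:rli311}(i), so dividing through by $\alpha_{N-2}\alpha_{N-1}\neq 0$ forces the two remaining bracketed coefficients to vanish; this yields (\ref{labelr2lr}) and (\ref{labelrlr2}) at $i=N-2$ and thereby extends those two identities to the full range $1\leq i\leq N-2$. Note that (\ref{xsatisfy1}) is not asserted at $i=N-2$, which is consistent with the missing $LR^3$ term.

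The main step requiring care is precisely this boundary analysis at $i=N-2$: one must check that the surviving two-term relation still knows about both (\ref{labelr2lr}) and (\ref{labelrlr2}), for which the two-matrix linear independence in Lemma~\ref{lem:rli311}(i) is exactly what is needed. There is no essential computation beyond the bookkeeping already carried out in Lemma~\ref{lem:1602021}; as an aside, the same three relations can alternatively be obtained from $B^*=0$ by applying Lemma~\ref{lem:fbf} with $D=B^*$, expanding $F_{i-2}B^*F_i$ via Lemma~\ref{lem:1602029}, and invoking Lemma~\ref{lem:rli0}(ii) together with the boundary statement in Lemma~\ref{lem:rli311}(ii).
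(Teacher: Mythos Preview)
Your proof is correct and follows essentially the same approach as the paper: use $F_{i+2}BF_i=0$ together with Lemma~\ref{lem:1602021}, then invoke the linear independence results Lemma~\ref{lem:rli0}(i) for $1\leq i\leq N-3$ and Lemma~\ref{lem:rli311}(i) for the boundary case $i=N-2$. Your explicit mention of Lemma~\ref{lem:parasatisfy} to justify the standard assumption and the closing remark about the dual argument via $B^*$ are reasonable additions but not essential.
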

\begin{proof}

First assume that $1\leq i\leq N-3$. In the table of Lemma~\ref{lem:1602021}, each coefficient is zero by  Lemma~\ref{lem:fbf} and  Lemma~\ref{lem:rli0}(i).
This gives (\ref{labelr2lr})--(\ref{xsatisfy1}). Next assume that $i=N-2$. In the table of Lemma~\ref{lem:1602021}, the term $LR^3F_i=0$. So the third equation
of the table is eliminated. In the remaining two rows, each coefficient is zero by  Lemma~\ref{lem:fbf} and  Lemma~\ref{lem:rli311}(i).
This gives   (\ref{labelr2lr}), (\ref{labelrlr2}).
\end{proof}

Next we simplify  the equations (\ref{labelr2lr})--(\ref{xsatisfy1}).

\begin{lemma}\label{lem:xisatisfy}
Assume $B=0$ and $B^*=0$. Then for  $1\leq i\leq N-3$,
\begin{enumerate}
\item[\rm(i)]\label{xsatisfy4}
$\xi_{i-1}-(\beta+1)\xi_i+(\beta+1)\xi_{i+1}-\xi_{i+2}=0,$
\item[\rm(ii)]\label{xsatisfy2}
$q^{-1}\xi_{i-1}-(\beta+1)\xi_{i+1}+(q+1)\xi_{i+2}=0,$
\item[\rm(iii)]
$\xi_{i-1}-\xi_{i+2}-(q+1)^{-1}q^{2+i-N-M}\heartsuit_i=0.$
\end{enumerate}
\end{lemma}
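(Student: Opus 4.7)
The strategy is to obtain all three identities as linear combinations of the three equations (\ref{labelr2lr}), (\ref{labelrlr2}), (\ref{xsatisfy1}) supplied by Lemma~\ref{lem:xidependent}. For the full range $1\leq i\leq N-3$ all three equations are simultaneously available, so no case analysis on $i$ is needed.

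For part (i), the plan is to subtract (\ref{xsatisfy1}) from (\ref{labelrlr2}). The coefficient of $\xi_{i-1}$ becomes $1$, the coefficient of $\xi_{i+2}$ becomes $-1$, and the key observation is that
\[
\frac{q(q^2-1)}{q^3-1}+\frac{q-1}{q^3-1}=\frac{q^3-1}{q^3-1}=1,\qquad
\frac{q^2(q-1)}{q^3-1}+\frac{q^2-1}{q^3-1}=\frac{q^3-1}{q^3-1}=1,
\]
so the coefficients of $\xi_i$ and $\xi_{i+1}$ collapse respectively to $-(\beta+1)$ and $(\beta+1)$, yielding exactly (i).

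For part (ii), the plan is to eliminate $\xi_i$ between (\ref{labelrlr2}) and (\ref{xsatisfy1}). From (\ref{labelrlr2}) one solves
\[
\xi_i=\frac{q^3-1}{q(q^2-1)(\beta+1)}\,\xi_{i-1}+\frac{q}{q+1}\,\xi_{i+1},
\]
and substituting into (\ref{xsatisfy1}) the coefficient of $\xi_{i-1}$ simplifies to $(q(q+1))^{-1}$ while, using $\frac{q-1}{q^3-1}=\frac{1}{q^2+q+1}$ and $\frac{q}{q+1}-(q+1)=-\frac{q^2+q+1}{q+1}$, the coefficient of $\xi_{i+1}$ simplifies to $-(\beta+1)/(q+1)$. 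Multiplying through by $q+1$ produces (ii).

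For part (iii), the plan is to combine (i) with (\ref{labelr2lr}). Equation (\ref{labelr2lr}) reads $\heartsuit_i=q^{N+M-i-2}(q+1)(\beta+1)(\xi_i-\xi_{i+1})$, which rearranges to
\[
(q+1)^{-1}q^{2+i-N-M}\heartsuit_i=(\beta+1)(\xi_i-\xi_{i+1}).
\]
Substituting this into (iii) reduces it to $\xi_{i-1}-(\beta+1)\xi_i+(\beta+1)\xi_{i+1}-\xi_{i+2}=0$, which is precisely (i).

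There is no real obstacle here; once one notices the telescoping identities $\tfrac{q(q^2-1)+(q-1)}{q^3-1}=1$ and $\tfrac{q^2(q-1)+(q^2-1)}{q^3-1}=1$, the computations for (i) are automatic, (iii) is essentially a restatement of (i) via (\ref{labelr2lr}), and (ii) is the only step requiring care, but it is a straightforward elimination of $\xi_i$ using only the definition $\xi_j=\alpha_j\alpha^*_{j+1}$ and the factorization $q^3-1=(q-1)(q^2+q+1)$.
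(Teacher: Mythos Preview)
Your argument is correct and follows exactly the paper's approach: parts (i) and (ii) are obtained by combining (\ref{labelrlr2}) and (\ref{xsatisfy1}), and part (iii) follows from (\ref{labelr2lr}) together with (i). One small point: in part (ii) you solve (\ref{labelrlr2}) for $\xi_i$, which tacitly assumes $\beta+1\neq 0$; this is harmless since $\beta+1=0$ in (\ref{labelrlr2}) would force $\xi_{i-1}=0$, contradicting the line below (\ref{xala}), but you could also sidestep the issue entirely by forming the combination (\ref{labelrlr2})${}+q(q+1)\cdot$(\ref{xsatisfy1}) directly, in which the $\xi_i$-coefficient vanishes identically and dividing the result by $q$ gives (ii).
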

\begin{proof}
(i) Combine   (\ref{labelrlr2}), (\ref{xsatisfy1}).

\noindent
(ii)
Combine   (\ref{labelrlr2}), (\ref{xsatisfy1}).

\noindent
(iii) By (\ref{labelr2lr}) and (i).
\end{proof}

\begin{lemma}\label{lem:threerecurren}
Assume $B=0$ and $B^*=0$. Then the sequences $\{\theta_i\}_{i=0}^N$,  $\{\theta^*_i\}_{i=0}^N$,  $\{\xi_i\}_{i=0}^{N-1}$
are all $\beta$-recurrent.
\end{lemma}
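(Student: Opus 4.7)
The plan is to read off each of the three $\beta$-recurrences directly from results already established under the hypothesis $B = 0$, $B^* = 0$. By Lemma~\ref{lem:parasatisfy}, this hypothesis already forces the standard assumption of Definition~\ref{def:0427}, and that package of identities together with Lemma~\ref{lem:xisatisfy}(i) will supply everything we need.

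First I would handle $\{\theta_i\}_{i=0}^N$. By Lemma~\ref{lem:parasatisfy}(i), for $2 \leq i \leq N-1$ we have
\[
(\beta+1)(\theta_{i-1} - \theta_i) = \theta_{i-2} - \theta_{i+1}.
\]
Rearranging yields $\theta_{i-2} - (\beta+1)\theta_{i-1} + (\beta+1)\theta_i - \theta_{i+1} = 0$, which is precisely the $\beta$-recurrence for $\{\theta_i\}_{i=0}^N$ on the required range $2\leq i \leq N-1$. The same argument applied to Lemma~\ref{lem:parasatisfy}(ii) handles $\{\theta^*_i\}_{i=0}^N$.

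For $\{\xi_i\}_{i=0}^{N-1}$, I would invoke Lemma~\ref{lem:xisatisfy}(i), which (under the hypothesis $B=B^*=0$) gives
\[
\xi_{i-1} - (\beta+1)\xi_i + (\beta+1)\xi_{i+1} - \xi_{i+2} = 0
\]
for $1 \leq i \leq N-3$. Shifting the index by setting $j = i+1$, this becomes
\[
\xi_{j-2} - (\beta+1)\xi_{j-1} + (\beta+1)\xi_j - \xi_{j+1} = 0 \qquad (2 \leq j \leq N-2),
\]
which is exactly the $\beta$-recurrence for the sequence $\{\xi_i\}_{i=0}^{N-1}$ (a sequence of length $N$, so the recurrence range is $2 \leq j \leq N-2$, matching).

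There is essentially no obstacle here: the work has been done in Lemma~\ref{lem:parasatisfy} and Lemma~\ref{lem:xisatisfy}, and this proof is a bookkeeping step that repackages those identities into the uniform $\beta$-recurrence language of Lemma~\ref{lem:betarecurrence}. The only thing to be careful about is the index shift for $\xi$ and verifying that the ranges in Definition~\ref{def:0427}(i),(ii) and Lemma~\ref{lem:xisatisfy}(i) cover exactly the values demanded by the definition of $\beta$-recurrent.
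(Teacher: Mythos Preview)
Your proposal is correct and follows essentially the same approach as the paper: invoke Lemma~\ref{lem:parasatisfy} (i.e.\ the standard assumption, Definition~\ref{def:0427}(i),(ii)) for the $\theta$- and $\theta^*$-sequences, and Lemma~\ref{lem:xisatisfy}(i) for the $\xi$-sequence. One small labeling quibble: Lemma~\ref{lem:parasatisfy} itself has no enumerated parts (i), (ii); those belong to Definition~\ref{def:0427}, so your references ``Lemma~\ref{lem:parasatisfy}(i)/(ii)'' should be adjusted accordingly.
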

\begin{proof}
By Lemma~\ref{lem:parasatisfy}, $\{\theta_i\}_{i=0}^N$ and  $\{\theta^*_i\}_{i=0}^N$ are $\beta$-recurrent.  By Lemma~\ref{lem:xisatisfy}(i),
 $\{\xi_i\}_{i=0}^{N-1}$ is $\beta$-recurrent.
\end{proof}

\begin{lemma}\label{lem:betanot2}
Assume  $B=0$ and $B^*=0$. Then $\beta\neq2$ and $\beta\neq-2$.
\end{lemma}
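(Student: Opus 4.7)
The plan is to argue by contradiction, using that $\{\xi_i\}_{i=0}^{N-1}$ is $\beta$-recurrent (Lemma~\ref{lem:threerecurren}) together with the auxiliary recurrence from Lemma~\ref{lem:xisatisfy}(ii). Recall that $\xi_i \neq 0$ for $0 \le i \le N-1$, so $\{\xi_i\}$ is not identically zero; the goal is to show that the two recurrences are jointly incompatible when $\beta = \pm 2$.

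First I would dispose of $\beta = 2$. By Lemma~\ref{lem:betarecurrence}(i), there exist $a,b,c \in \mathbb{C}$ with $\xi_i = a + bi + ci^2$ for $-1 \le i \le N$. Substitute this ansatz into Lemma~\ref{lem:xisatisfy}(ii), which in the case $\beta = 2$ reads
$$q^{-1}\xi_{i-1} - 3\,\xi_{i+1} + (q+1)\,\xi_{i+2} = 0 \qquad (1\le i\le N-3).$$
Collecting powers of $i$, the coefficients of $i^2$, $i^1$, $i^0$ each turn out to be a scalar multiple of $q^{-1} - 3 + (q+1) = q + q^{-1} - 2 = (q^{1/2}-q^{-1/2})^2$, which is nonzero since $q \neq 1$. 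Thus $c=b=a=0$, forcing $\xi_i = 0$ for all $i$, a contradiction.

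For $\beta = -2$ the argument is parallel. By Lemma~\ref{lem:betarecurrence}(ii) write $\xi_i = a + b(-1)^i + c\,i(-1)^i$. Lemma~\ref{lem:xisatisfy}(ii) now becomes
$$q^{-1}\xi_{i-1} + \xi_{i+1} + (q+1)\,\xi_{i+2} = 0 \qquad (1\le i\le N-3).$$
The $i$-independent, $(-1)^i$-free part has coefficient $q^{-1} + 1 + (q+1) = q + q^{-1} + 2 = (q^{1/2}+q^{-1/2})^2 \neq 0$, forcing $a=0$. Separating the remaining terms by their dependence on $i$ (both carrying a factor of $(-1)^i$), the coefficient of $i\,(-1)^i$ and the constant $(-1)^i$-part each reduce to a nonzero multiple of $q - q^{-1}$, forcing $c=0$ and then $b=0$. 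Again $\xi \equiv 0$, contradicting $\xi_i \neq 0$ for $0 \le i \le N-1$.

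The only potential obstacle is bookkeeping: one must verify that the range $1 \le i \le N-3$ is nonempty and gives enough equations to force all three constants to vanish, but the hypothesis $N \ge 6$ is more than sufficient. The essential point—and the only real content of the argument—is that the auxiliary linear relation (ii) has characteristic factors $q + q^{-1} \mp 2$ that cannot vanish under the standing assumption that $q$ is not a root of unity, so neither the polynomial-type nor the alternating-type $\beta$-recurrent solution can satisfy it nontrivially.
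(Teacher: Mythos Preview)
Your approach is the same as the paper's: plug the closed form for a $\beta$-recurrent sequence (Lemma~\ref{lem:betarecurrence}) into the auxiliary recurrence Lemma~\ref{lem:xisatisfy}(ii) and force $\xi\equiv 0$, contradicting $\xi_i\neq 0$. One correction: in the $\beta=2$ case the coefficients of $i^1$ and $i^0$ are \emph{not} each pure scalar multiples of $q+q^{-1}-2$ (they pick up cross-terms from $c$, resp.\ $b,c$); rather, the resulting $3\times 3$ linear system in $(c,b,a)$ is triangular with $q+q^{-1}-2$ on the diagonal, so the conclusion $c=b=a=0$ still follows sequentially --- exactly as you (correctly) argue it in the $\beta=-2$ case.
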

\begin{proof}
First we show $\beta\neq2$. Suppose on the contrary that $\beta=2$.
 Then by Lemma~\ref{lem:xisatisfy}(ii) we find that for  $1\leq i\leq N-3$,
\begin{equation}\label{beta22}
q^{-1}\xi_{i-1}-3\xi_{i+1}+(q+1)\xi_{i+2}=0.
\end{equation}
By Lemma~\ref{lem:betarecurrence}(i) there exist scalars $a,b,c\in \mathbb{C}$ such that
 $\xi_i=a+bi+ci^2$ for $0\leq i\leq N-1$. Evaluating  (\ref{beta22}) using this, we find that  for  $1\leq i\leq N-3$,
 $0$ is a weighted sum involving the following terms and coefficients:

 \medskip
\centerline{
\begin{tabular}[t]{c|l}
term & coefficient
   \\  \hline
$i^2$ & $(q-1)^2c$
  \\
$i$ & $b-2c-3q(b+2c)+q(q+1)(b+4c)$
   \\
 $1$ &  $a-b+c-3q(a+b+c)+q(q+1)(a+2b+4c)$
 \\
     \end{tabular}}
 \medskip
\noindent
Since $N\geq 6$, in the above table each coefficient is $0$. By construction, $q$ is an integer at least $2$.
Then $c=0$ by the first row of the table.
Combining  this with the second row of the table
we get $(q-1)^2b=0$, so $b=0$. Combining $b=0$ and $c=0$ with the third row of the table we get $(q-1)^2a=0$, so $a=0$.
Hence $\xi_i=0$ for $0\leq i\leq N-1$. This  contradicts  the line below (\ref{xala}),
so $\beta\neq 2$.

Now we show $\beta\neq -2$. Suppose on the contrary that $\beta=-2$.
 Then by Lemma~\ref{lem:xisatisfy}(ii) we find that for  $1\leq i\leq N-3$,
\begin{equation}\label{beta2203}
q^{-1}\xi_{i-1}+\xi_{i+1}+(q+1)\xi_{i+2}=0.
\end{equation}
By Lemma~\ref{lem:betarecurrence}(ii) there exist scalars $a,b,c\in \mathbb{C}$ such that
 $\xi_i=a+b(-1)^i+ci(-1)^i$ for $0\leq i\leq N-1$. Evaluating  (\ref{beta2203}) using this, we find that  for  $1\leq i\leq N-3$,
 $0$ is a weighted sum involving the following terms and coefficients:

 \medskip
\centerline{
\begin{tabular}[t]{c|l}
term & coefficient
   \\  \hline
$i(-1)^{i}$ & $(q^2-1)c$
  \\
$(-1)^{i}$ & $(q^2-1)b+(1+q+2q^2)c$
   \\
 $1$ &  $(1+q)^2a$
 \\
     \end{tabular}}
 \medskip
\noindent
 Since $N\geq 6$, in the above table each coefficient is $0$. By construction, $q$ is an integer at least $2$.
 Then $c=0$ by the first row of the table and $a=0$ by the third row of the table. Combining $c=0$
 with the second row of the table we get $(1-q^2)b=0$, so $b=0$.
Hence $\xi_i=0$ for $0\leq i\leq N-1$. This  contradicts  the line below (\ref{xala}),
so $\beta\neq -2$.
\end{proof}

\begin{definition}\label{def:bigq}
Fix a nonzero $Q\in \mathbb{C}$ such that $\beta=Q+Q^{-1}$.
\end{definition}
Assume $B=0$ and $B^*=0$. Then the scalar $Q$ in Definition~\ref{def:bigq} is not equal to $\pm 1$ by Lemma~\ref{lem:betanot2}.
\begin{lemma}\label{lem:xiforms}
Assume $B=0$ and $B^*=0$. Then there exist scalars $a,b,c, a^*,b^*,c^*$ in $\mathbb{C}$
such that for $0\leq i\leq N$,
\begin{eqnarray}
\theta_i&=&a+bQ^i+cQ^{-i},\label{theta}
\\
\theta^*_i&=&a^*+b^*Q^i+c^*Q^{-i}.\label{theta*}
\end{eqnarray}
Also, there exist scalars $x,y,z$ in $\mathbb{C}$
such that for $0\leq i\leq N-1$,
\begin{eqnarray}
\xi_i&=&x+yQ^i+zQ^{-i}.\label{X}
\end{eqnarray}
The scalar $Q$ is from Definition~\ref{def:bigq}.
\end{lemma}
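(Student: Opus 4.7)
The plan is that this lemma should follow almost immediately by combining three earlier results: the $\beta$-recurrence of the three sequences, the exclusion $\beta \neq \pm 2$, and the explicit solution of $\beta$-recurrences in the generic case.

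First, I would invoke Lemma~\ref{lem:threerecurren} to conclude that each of $\{\theta_i\}_{i=0}^N$, $\{\theta^*_i\}_{i=0}^N$, $\{\xi_i\}_{i=0}^{N-1}$ is $\beta$-recurrent (using the hypothesis $B=0$, $B^*=0$). Next I would apply Lemma~\ref{lem:betanot2} to rule out the degenerate values $\beta = 2$ and $\beta = -2$, which places us squarely in case (iii) of Lemma~\ref{lem:betarecurrence}. By Definition~\ref{def:bigq} we have a nonzero $Q \in \mathbb{C}$ with $\beta = Q + Q^{-1}$, and this $Q$ is precisely the parameter required in Lemma~\ref{lem:betarecurrence}(iii).

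Applying Lemma~\ref{lem:betarecurrence}(iii) separately to each of the three $\beta$-recurrent sequences yields scalars $a,b,c$ giving (\ref{theta}), scalars $a^*,b^*,c^*$ giving (\ref{theta*}), and scalars $x,y,z$ giving (\ref{X}), with the claimed index ranges matching the lengths of the respective sequences.

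There is essentially no obstacle here; the work has all been done in the preceding lemmas, and the present lemma is a packaging statement that records the closed-form consequence. The only thing to be mildly careful about is confirming that the $Q$ fixed once and for all in Definition~\ref{def:bigq} is the same $Q$ that appears in the three expansions — but this is automatic because the recurrence coefficient $\beta+1$ is the same for all three sequences, and the general solution in Lemma~\ref{lem:betarecurrence}(iii) is parametrized by any root of $t^2 - \beta t + 1 = 0$.
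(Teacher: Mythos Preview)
Your proposal is correct and matches the paper's own proof, which simply says to combine Lemmas~\ref{lem:betarecurrence}, \ref{lem:threerecurren}, \ref{lem:betanot2}, and Definition~\ref{def:bigq}. Your explanation makes explicit exactly this chain of reasoning.
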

\begin{proof}
Combine  Lemmas~\ref{lem:betarecurrence}, \ref{lem:threerecurren}, \ref{lem:betanot2}, Definition~\ref{def:bigq}.
\end{proof}

\begin{note}\label{note:2}\rm
Assume $B=0$ and $B^*=0$. Then referring to Lemma~\ref{lem:xiforms}, the scalars $b$, $c$ are not both zero by (\ref{thetasatisfy}). Similarly, the scalars $b^*$, $c^*$ are not both zero.
\end{note}
\begin{note}\label{note:1}\rm
Assume $B=0$ and $B^*=0$. Then referring to  Lemma~\ref{lem:xiforms}, by (\ref{thetasatisfy})  we have  that
$Q^i\neq 1$ for $1\leq i\leq N$.
\end{note}

\begin{lemma}\label{thm:main41}
Assume $B=0$ and $B^*=0$. Then the scalars $\beta, \gamma, \gamma^*, \varrho, \varrho^*$ from above {\rm(\ref{equa1})}  are given by
$\beta=Q+Q^{-1}$ and
\begin{eqnarray*}
& &\gamma=-Q^{-1}(Q-1)^2a,\qquad \qquad\qquad\varrho =Q^{-1}(Q-1)^2a^2-(Q-Q^{-1})^2bc,\\
& &\gamma^*=-Q^{-1}(Q-1)^2a^*,\qquad \qquad\quad~
\varrho^*=Q^{-1}(Q-1)^2a^{*2}-(Q-Q^{-1})^2b^*c^*.
\end{eqnarray*}
The scalar $Q$ is from Definition~\ref{def:bigq} and the scalars $a, b, c, a^*, b^*, c^*$ are from Lemma~\ref{lem:xiforms}.
\end{lemma}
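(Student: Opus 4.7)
The plan is to show that this lemma follows by direct substitution: the hypotheses $B=0$ and $B^*=0$ give us closed-form expressions for $\theta_i, \theta^*_i$ via Lemma~\ref{lem:xiforms}, and also give us closed-form expressions for $\gamma, \gamma^*, \varrho, \varrho^*$ in terms of $\theta_i, \theta^*_i$ via Lemma~\ref{lem:parasatisfy}. Combining these, everything reduces to a symbolic calculation in the variable $Q$.

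First I would record the setup. By Lemma~\ref{lem:parasatisfy}, the standard assumption (Definition~\ref{def:0427}) holds, and in particular
$$\gamma=\theta_{i-1}-\beta\theta_i+\theta_{i+1},\qquad \gamma^*=\theta^*_{i-1}-\beta\theta^*_i+\theta^*_{i+1},$$
$$\varrho=\theta^2_{i-1}-\beta\theta_{i-1}\theta_i+\theta^2_i-\gamma(\theta_{i-1}+\theta_i),\qquad \varrho^*=\theta^{*2}_{i-1}-\beta\theta^*_{i-1}\theta^*_i+\theta^{*2}_i-\gamma^*(\theta^*_{i-1}+\theta^*_i),$$
for suitable $i$. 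By Lemma~\ref{lem:xiforms} and Definition~\ref{def:bigq}, $\theta_i=a+bQ^i+cQ^{-i}$, $\theta^*_i=a^*+b^*Q^i+c^*Q^{-i}$, and $\beta=Q+Q^{-1}$. The claim $\beta=Q+Q^{-1}$ is then immediate.

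Next I would carry out the calculation of $\gamma$. Write $u_i:=bQ^i+cQ^{-i}$, so $\theta_i=a+u_i$. A short computation shows that $u_{i-1}-\beta u_i+u_{i+1}=0$, because $Q^{i-1}-(Q+Q^{-1})Q^i+Q^{i+1}=0$ and similarly for the $c$-term. Therefore
$$\gamma=(1-\beta+1)a+(u_{i-1}-\beta u_i+u_{i+1})=(2-\beta)a=-Q^{-1}(Q-1)^2 a,$$
using $2-\beta=-Q^{-1}(Q-1)^2$. The formula for $\gamma^*$ is obtained by the same computation with starred symbols.

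For $\varrho$, substitute $\theta_i=a+u_i$ into the formula above. The terms linear in $a$ group together and, using $\gamma=(2-\beta)a$, cancel out; the purely $a$-part gives $(2-\beta)a^2-2\gamma a=-(2-\beta)a^2=Q^{-1}(Q-1)^2 a^2$. Thus it remains to simplify
$$u^2_{i-1}+u^2_i-\beta u_{i-1}u_i.$$
Expanding: $u^2_{i-1}+u^2_i=b^2(Q^{2i-2}+Q^{2i})+c^2(Q^{-2i+2}+Q^{-2i})+4bc$, and $\beta u_{i-1}u_i=(Q+Q^{-1})\bigl(b^2Q^{2i-1}+c^2Q^{-2i+1}+bc\beta\bigr)$, which expands to the same $b^2$ and $c^2$ terms plus $\beta^2 bc$. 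Hence the $b^2$ and $c^2$ contributions cancel and we get $bc(4-\beta^2)=-bc(Q-Q^{-1})^2$. Putting things together yields $\varrho=Q^{-1}(Q-1)^2 a^2-(Q-Q^{-1})^2 bc$, and the formula for $\varrho^*$ follows by the same calculation. There is no real obstacle here beyond bookkeeping; the key observation making the calculation clean is that the $i$-dependent terms in $u^2_{i-1}+u^2_i-\beta u_{i-1}u_i$ cancel, leaving a constant in $bc$.
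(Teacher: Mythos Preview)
Your proposal is correct and follows the same approach as the paper, which simply cites Lemma~\ref{lem:parasatisfy} together with (\ref{theta}), (\ref{theta*}); you have just written out the substitution and simplification explicitly, and the algebra checks out.
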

\begin{proof}
Use   Lemma~\ref{lem:parasatisfy}  and (\ref{theta}), (\ref{theta*}).
\end{proof}

\begin{lemma}\label{lem:1602052}
Assume $B=0$ and $B^*=0$. Then $y=0$ or $z=0$.
\end{lemma}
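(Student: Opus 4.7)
The plan is to substitute the explicit form $\xi_i=x+yQ^i+zQ^{-i}$ from Lemma~\ref{lem:xiforms} into the additional constraint of Lemma~\ref{lem:xisatisfy}(ii), namely $q^{-1}\xi_{i-1}-(\beta+1)\xi_{i+1}+(q+1)\xi_{i+2}=0$ for $1\leq i\leq N-3$, and to extract separate vanishing conditions on $y$ and $z$. Since $N\geq 6$, this index range contains at least three consecutive integers; and since $Q\neq \pm 1$ by Lemma~\ref{lem:betanot2}, the three functions $1$, $Q^i$, $Q^{-i}$ of $i$ are linearly independent on any such range. Hence the coefficients of $1$, $Q^i$, $Q^{-i}$ in the substituted identity must vanish separately.

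Using $\beta+1=Q+Q^{-1}+1$ and collecting terms, the coefficient of $Q^i$ works out to $y\,q^{-1}Q^{-1}(q^2Q^3-qQ^2-qQ+1)$, which factors as $y\,q^{-1}Q^{-1}(qQ-1)(qQ^2-1)$. By the symmetry $Q\leftrightarrow Q^{-1}$, the coefficient of $Q^{-i}$ equals $z\,q^{-1}Q(qQ^{-1}-1)(qQ^{-2}-1)$. Hence either $y=0$ or $qQ=1$ or $qQ^2=1$; and similarly either $z=0$ or $qQ^{-1}=1$ or $qQ^{-2}=1$.

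Suppose for contradiction that $y\neq 0$ and $z\neq 0$. Then one relation from each triple must hold, yielding four cases: $qQ=qQ^{-1}=1$ forces $Q^2=1$, contradicting $Q\neq \pm 1$; $qQ=qQ^{-2}=1$ gives $q^3=1$; $qQ^2=qQ^{-1}=1$ again gives $q^3=1$; and $qQ^2=qQ^{-2}=1$ gives $q^2=1$. Since $q$ is a prime power with $q\geq 2$, the last three are impossible. Therefore $y=0$ or $z=0$.

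The only nontrivial step is the polynomial factorization $q^2Q^3-qQ^2-qQ+1=(qQ-1)(qQ^2-1)$; after that, the argument is a short case check, and parts (i), (iii) of Lemma~\ref{lem:xisatisfy} play no further role.
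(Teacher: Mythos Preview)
Your proof is correct and follows essentially the same route as the paper: substitute $\xi_i=x+yQ^i+zQ^{-i}$ into Lemma~\ref{lem:xisatisfy}(ii), use $Q\neq\pm1$ and $N\geq 6$ to separate the coefficients of $1,Q^i,Q^{-i}$, and then rule out the case $y\neq 0$, $z\neq 0$ by a short case analysis on the resulting factored constraints. Your factorization $q^{-1}Q^{-1}(qQ-1)(qQ^2-1)$ is just a rescaling of the paper's $(q^{-1}Q^{-1}-Q)(1-qQ)$, and where the paper invokes Note~\ref{note:1} to derive the contradiction, you instead reach $q^2=1$ or $q^3=1$ directly, which is equivalent.
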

\begin{proof}
Evaluate Lemma~\ref{lem:xisatisfy}(ii) using (\ref{X}).
Then for
$1\leq i\leq N-3$,  $0$ is equal to a weighted sum involving the following terms and coefficients:

\medskip
\centerline{
\begin{tabular}[t]{c|l}
term & coefficient
   \\  \hline
$1$ & $q^{-1}(1-qQ)(1-qQ^{-1})x$
  \\
$Q^i$ & $(q^{-1}Q^{-1}-Q)(1-qQ)y$
   \\
 $Q^{-i}$ &  $(q^{-1}Q-Q^{-1})(1-qQ^{-1})z$
 \\
     \end{tabular}}
 \medskip
\noindent

Recall that $Q\neq\pm 1$ and $N\geq 6$. Therefore, in the above table each coefficient is $0$. In other words,
\begin{eqnarray}
& &(1-qQ)(1-qQ^{-1})x=0,\label{201601065}\\
& &(q^{-1}Q^{-1}-Q)(1-qQ)y=0,\label{201601068}\\
& &(q^{-1}Q-Q^{-1})(1-qQ^{-1})z=0.\label{201601060}
\end{eqnarray}
We now show that $yz=0$. Suppose not. Then both
\begin{eqnarray}
& &(q^{-1}Q^{-1}-Q)(1-qQ)=0,\label{201602051}\\
& &(q^{-1}Q-Q^{-1})(1-qQ^{-1})=0.\label{201602052}
\end{eqnarray}
By (\ref{201602051}), one gets $Q=q^{-1}$ or $Q^2=q^{-1}$. By (\ref{201602052}), one gets $Q=q$ or $Q^{2}=q$.
This contradicts
 Note~\ref{note:1}.  Therefore $yz=0$ and the  result follows.
\end{proof}

We now investigate the cases in Lemma~\ref{lem:1602052}. In what follows, we refer to the scalars in Definition~\ref{def:bigq} and Lemma~\ref{lem:xiforms}.

\begin{lemma}\label{lem:Qrelationq}
Assume $B=0$ and $B^*=0$.
\begin{description}
  \item[\rm Case~I:] Assume $y=0$ and $z\neq 0$. Then
  $$Q=q,\qquad bb^*=0,\qquad z=-q^{-1-N-M}(q-1)^2cc^*, \qquad c\neq0,\qquad c^*\neq0.$$

  \item[\rm Case~II:] Assume $y\neq 0$ and $z=0$. Then
  $$Q=q^{-1},\qquad cc^*=0,\qquad y=-q^{-1-N-M}(q-1)^2bb^*,\qquad b\neq0,\qquad b^*\neq0.$$

 \item[\rm Case~III:] Assume $y=0$ and $z=0$. Then
    $$Q\in \{q, q^{-1}\},\qquad x\neq0, \qquad bb^*=0,\qquad cc^*=0.$$
\end{description}
\end{lemma}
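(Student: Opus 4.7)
The plan is to proceed case-by-case as stated, combining two main ingredients: the three vanishing conditions on $x,y,z$ extracted during the proof of Lemma~\ref{lem:1602052}, namely
\[
(1-qQ)(1-qQ^{-1})x=0,\quad (q^{-1}Q^{-1}-Q)(1-qQ)y=0,\quad (q^{-1}Q-Q^{-1})(1-qQ^{-1})z=0,
\]
together with an explicit closed-form evaluation of Lemma~\ref{lem:xisatisfy}(iii) that couples $\xi_i$ to the products $bb^*$ and $cc^*$.

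First I would substitute the formulas $\theta_i=a+bQ^i+cQ^{-i}$ and $\theta^*_i=a^*+b^*Q^i+c^*Q^{-i}$ from Lemma~\ref{lem:xiforms} into the definition of $\heartsuit_i$. A direct expansion of the two products in the bracket, using $(\beta+1)=Q^{-1}(Q^3-1)/(Q-1)$ from Definition~\ref{def:bigq}, collapses to
\[
\heartsuit_i = Q^{-1}(Q-1)(Q^2-1)(Q^3-1)\bigl(bb^*\,Q^{2i}-cc^*\,Q^{-2i-4}\bigr).
\]
A parallel computation gives $\xi_{i-1}-\xi_{i+2}=(Q^3-1)(zQ^{-i-2}-yQ^{i-1})$. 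By Note~\ref{note:1} together with $N\ge 6$ one has $Q^3\ne 1$, so the factor $Q^3-1$ cancels from Lemma~\ref{lem:xisatisfy}(iii), yielding for $1\le i\le N-3$ the master equation
\[
zQ^{-i-2}-yQ^{i-1} = (q+1)^{-1}q^{2+i-N-M}Q^{-1}(Q-1)(Q^2-1)\bigl(bb^*\,Q^{2i}-cc^*\,Q^{-2i-4}\bigr).
\]

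In each case this reduces to a relation of the form $0=\sum_k\lambda_k\mu_k^i$ over $N-3\ge 3$ values of $i$, and linear independence (granted by pairwise distinctness of the bases $\mu_k$) pins down the coefficients. For Case~I, the third vanishing condition narrows $Q$ to $\{q\}\cup\{Q:Q^2=q\}$. In the subcase $Q^2=q$ with $Q\ne q$ one has $qQ^2=q^2$ and $qQ^{-2}=1$, so the master equation mixes the three independent exponentials $Q^{-i}$, $q^{2i}$, $1$ (distinct since $q\ge 2$), and matching the $Q^{-i}$ coefficient forces $z=0$, a contradiction. Hence $Q=q$, and matching the $q^{3i}$ and $q^{-i}$ coefficients gives $bb^*=0$ together with $z=-q^{-1-N-M}(q-1)^2cc^*$, whence $cc^*\ne 0$. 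Case~II is entirely symmetric under the involution $Q\leftrightarrow Q^{-1}$, $b\leftrightarrow c$, $b^*\leftrightarrow c^*$, $y\leftrightarrow z$. For Case~III, constancy of $\xi_i$ combined with $\xi_i\ne 0$ (line below (\ref{xala})) forces $x\ne 0$, and then the first vanishing condition above pins $Q$ to $\{q,q^{-1}\}$; the master equation has vanishing left side, and since $qQ^2\ne qQ^{-2}$ for both allowed values of $Q$, linear independence yields $bb^*=0$ and $cc^*=0$.

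The main obstacle will be the bookkeeping in the derivation of the closed form for $\heartsuit_i$: one must expand the two products $(\theta^*_i-\theta^*_{i+2})(\theta_{i+1}-\theta_i)$ and $(\theta^*_{i+1}-\theta^*_{i+2})(\theta_i-\theta_{i+2})$, and verify the delicate cancellation of the cross terms involving $bc^*$ and $b^*c$, leaving only the diagonal contributions $bb^*$ and $cc^*$. Once that identity is in hand, the rest of the argument in all three cases is a routine matching of exponential coefficients.
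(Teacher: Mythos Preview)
Your proposal is correct and follows essentially the same route as the paper's proof: both substitute the closed forms for $\theta_i,\theta^*_i,\xi_i$ into Lemma~\ref{lem:xisatisfy}(iii), extract a linear relation among exponentials $Q^i,Q^{-i},(qQ^2)^i,(qQ^{-2})^i$, and combine the resulting coefficient constraints with the three vanishing conditions (\ref{201601065})--(\ref{201601060}) from Lemma~\ref{lem:1602052}. The only difference is cosmetic---you write out the explicit closed form for $\heartsuit_i$ and record the ``master equation'' after cancelling $Q^3-1$, whereas the paper tabulates the four coefficients directly; the case analysis (ruling out $Q^2=q$ in Case~I by matching the $Q^{-i}$ coefficient, then reading off $bb^*=0$ and the value of $z$ from the $q^{3i}$ and $q^{-i}$ coefficients) is identical.
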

\begin{proof}
Evaluate Lemma~\ref{lem:xisatisfy}(iii) using (\ref{theta}), (\ref{theta*}), (\ref{X}).
Then for
$1\leq i\leq N-3$,  $0$ is equal to a weighted sum involving the following terms and coefficients:

\medskip
\centerline{
\begin{tabular}[t]{c|l}
term & coefficient
   \\  \hline
$Q^i$ & $Q^{-1}(1-Q^3)y$
   \\
 $Q^{-i}$ &  $Q(1-Q^{-3})z$
 \\
 $(qQ^2)^{i}$ &  $-(q+1)^{-1}q^{2-N-M}Q^{-1}(Q^3-1)(Q^2-1)(Q-1)bb^*$
 \\
 $(qQ^{-2})^{i}$ &  $-(q+1)^{-1}q^{2-N-M}Q(Q^{-3}-1)(Q^{-2}-1)(Q^{-1}-1)cc^*$
 \\
     \end{tabular}
}

\medskip
We now consider the cases.

Case~I: Using
(\ref{201601060}) one gets $Q=q$ or $Q^{2}=q$.
We claim that $Q=q$.  Otherwise, we have $qQ^2=Q^4$ and $qQ^{-2}=1$. Then by Note~\ref{note:1}, the scalars $Q^{-1}, qQ^2, qQ^{-2}$ are mutually distinct.
So in the above table the coefficient of each term is zero. In particular,
$Q(1-Q^{-3})z=0$. By Note~\ref{note:1} again, one gets $z=0$, a contradiction. We have shown that $Q=q$. Using this fact, we simplify the above table, and
find that
 $0$ is equal to a weighted sum involving the following terms and coefficients:

\centerline{
\begin{tabular}[t]{c|l}
term & coefficient
   \\  \hline
 $q^{-i}$ &  $q(1-q^{-3})z-(q+1)^{-1}q^{3-N-M}(q^{-3}-1)(q^{-2}-1)(q^{-1}-1)cc^*$
 \\
 $q^{3i}$ &  $-(q+1)^{-1}q^{1-N-M}(q^3-1)(q^2-1)(q-1)bb^*$
 \\
     \end{tabular}
}

\medskip
\noindent
In this table the coefficient of each term is zero. Therefore,
\begin{eqnarray}
& &q(1-q^{-3})z-(q+1)^{-1}q^{3-N-M}(q^{-3}-1)(q^{-2}-1)(q^{-1}-1)cc^*=0,\label{201601305}\\
& &-(q+1)^{-1}q^{1-N-M}(q^3-1)(q^2-1)(q-1)^2bb^*=0.\label{201601308}
\end{eqnarray}
Using (\ref{201601308}), we obtain $bb^*=0$. Using (\ref{201601305}), we obtain
$z=-q^{-1-N-M}(q-1)^2cc^*$.

Case~II: Similar to the proof of Case~I.

Case III:
By (\ref{X}) and since $\xi_i\neq 0$ for $0\leq i\leq N-1$, we find  $x\neq 0$.  Combine this with  (\ref{201601065}) to get
$Q=q$ or $Q=q^{-1}$. Now $Q^2\neq Q^{-2}$. Moreover, for $1\leq i\leq N-3$,
 $0$ is equal to a weighted sum involving the following terms and coefficients:

\centerline{
\begin{tabular}[t]{c|l}
term & coefficient
   \\  \hline
 $(qQ^2)^{i}$ &  $-(q+1)^{-1}q^{2-N-M}Q(Q^{-3}-1)(Q^{-2}-1)(Q^{-1}-1)cc^*$
 \\
 $(qQ^{-2})^{i}$ &  $-(q+1)^{-1}q^{2-N-M}Q^{-1}(Q^3-1)(Q^2-1)(Q-1)bb^*$
 \\
     \end{tabular}
}
\medskip
\noindent
In this table the coefficient of each term  is zero.
Therefore $bb^*=0$ and $cc^*=0$.
\end{proof}

\begin{note}\label{note:0427}{\rm
Referring to Definition~\ref{def:bigq}, the scalar $Q$ is defined up to inverse. In Case III, after replacing
$Q$ by $Q^{-1}$ if necessary, we will assume $Q=q$.}
\end{note}

\begin{definition}\label{def:1602051}{\rm
Consider  Cases  I, II, III in Lemma~\ref{lem:Qrelationq}.

We  partition Case I into subcases:
\begin{description}

  \item[{\rm Case I$^+$:}] \qquad\qquad\qquad\qquad\qquad$b\neq0,\qquad\qquad\qquad\quad b^*=0;$

  \item[{\rm Case I$^-$:}] \qquad\qquad\qquad\qquad\qquad$b=0,\qquad\qquad\qquad\quad b^*\neq0;$

   \item[{\rm Case I$^0$:}] \qquad\qquad\qquad\qquad\qquad~$b=0,\qquad\qquad\qquad\quad b^*=0.$
  \end{description}

  We  partition Case II into subcases:
\begin{description}

   \item[{\rm Case II$^+$:}] \qquad\qquad\qquad\qquad\quad~~$c\neq 0,\qquad\qquad\quad\qquad c^*=0;$

    \item[{\rm Case II$^-$:}] \qquad\qquad\qquad\qquad\quad~~$c=0,\qquad\qquad\qquad\quad c^*\neq0;$

     \item[{\rm Case II$^0$:}] \qquad\qquad\qquad\qquad\qquad$c=0,\qquad\qquad\qquad\quad c^*=0.$
      \end{description}

  We  partition Case III into subcases:
\begin{description}

     \item[{\rm Case III$^+$:}] \qquad\quad$b\neq0$, \qquad $b^*=0$, \qquad\qquad $c=0$,\qquad $c^*\neq0$, \qquad\qquad $Q=q$;

    \item[{\rm Case III$^-$:}] \qquad\quad$b=0$, \qquad $b^*\neq0$, \qquad\qquad $c\neq0$,\qquad $c^*=0$, \qquad\qquad $Q=q$.

   \end{description}}

\end{definition}

\begin{theorem}\label{thm:main3}
Assume $B=0$ and $B^*=0$. Then $\{\xi_i\}_{i=0}^{N-1}, \{\theta_i\}_{i=0}^N, \{\theta^*_i\}_{i=0}^N$ are given in the table below:

\medskip
\centerline{
\begin{tabular}[t]{l|c c c}
{\rm Case} & $\xi_i$ & $\theta_i$ & $\theta^*_i$
   \\  \hline\hline
${\rm I^+}$ & $x-q^{-1-N-M-i}(q-1)^2cc^*$ & $a+bq^i+cq^{-i}$ & $a^*+c^*q^{-i}$
 \\
${\rm I^-}$ & $x-q^{-1-N-M-i}(q-1)^2cc^*$ & $a+cq^{-i}$ & $a^*+b^*q^i+c^*q^{-i}$
 \\
${\rm I^0}$ & $x-q^{-1-N-M-i}(q-1)^2cc^*$ & $a+cq^{-i}$ & $a^*+c^*q^{-i}$
 \\
  \hline
${\rm II^+}$ & $x-q^{-1-N-M-i}(q-1)^2bb^*$ & $a+bq^{-i}+cq^{i}$ & $a^*+b^*q^{-i}$
 \\
${\rm II^-}$ & $x-q^{-1-N-M-i}(q-1)^2bb^*$ & $a+bq^{-i}$ & $a^*+b^*q^{-i}+c^*q^{i}$
\\
${\rm II^0}$ & $x-q^{-1-N-M-i}(q-1)^2bb^*$ & $a+bq^{-i}$ & $a^*+b^*q^{-i}$
\\
 \hline
${\rm III^+}$ & $x$ & $a+bq^i$ & $a^*+c^*q^{-i}$
\\
${\rm III^-}$ & $x$ & $a+cq^{-i}$ & $a^*+b^*q^{i}$
\end{tabular}}
\end{theorem}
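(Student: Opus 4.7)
The plan is to assemble the theorem by substituting, case by case, the constraints from Lemma~\ref{lem:Qrelationq} and Definition~\ref{def:1602051} into the generic formulas of Lemma~\ref{lem:xiforms}. Since the hard analytical work has already been done (the $\beta$-recurrence in Lemma~\ref{lem:threerecurren}, the exclusion $\beta\neq\pm 2$ in Lemma~\ref{lem:betanot2}, the case dichotomy in Lemma~\ref{lem:1602052}, and the identification of $Q$ in Lemma~\ref{lem:Qrelationq}), all that remains is a careful bookkeeping exercise.

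Recall from Lemma~\ref{lem:xiforms} that
\[
\theta_i=a+bQ^i+cQ^{-i},\qquad \theta^*_i=a^*+b^*Q^i+c^*Q^{-i},\qquad \xi_i=x+yQ^i+zQ^{-i},
\]
for scalars $a,b,c,a^*,b^*,c^*,x,y,z\in\mathbb{C}$ and a nonzero $Q\in\mathbb{C}$ with $Q\neq\pm 1$. First I would treat Case~I$^+$: by Lemma~\ref{lem:Qrelationq}, Case~I, we have $Q=q$, $y=0$, $z=-q^{-1-N-M}(q-1)^2cc^*$, and $bb^*=0$, while the subcase condition $b\neq 0$, $b^*=0$ from Definition~\ref{def:1602051} selects Case~I$^+$. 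Plugging these into the three displays above immediately gives the first row of the table, and in particular $\xi_i=x+zq^{-i}=x-q^{-1-N-M-i}(q-1)^2cc^*$. Cases~I$^-$ and I$^0$ are handled in exactly the same way, using the corresponding vanishing pattern of $b$ and $b^*$.

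Next I would handle Cases~II$^+$, II$^-$, II$^0$ in parallel. Here Lemma~\ref{lem:Qrelationq}, Case~II, gives $Q=q^{-1}$, $z=0$, $y=-q^{-1-N-M}(q-1)^2bb^*$, and $cc^*=0$; consequently $Q^i=q^{-i}$ in each of the generic formulas, so the roles of $b$ and $c$ (and of $b^*$ and $c^*$) are interchanged relative to Case~I. In particular $\xi_i=x+yQ^i=x-q^{-1-N-M-i}(q-1)^2bb^*$, and the middle three rows of the table drop out after applying the subcase constraints in Definition~\ref{def:1602051}.

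Finally, Cases~III$^+$ and III$^-$ use Lemma~\ref{lem:Qrelationq}, Case~III, together with Note~\ref{note:0427}, which lets us fix $Q=q$. Now $y=z=0$ yields $\xi_i=x$ identically, and the subcase conditions in Definition~\ref{def:1602051} (namely $c=0$ with $b\neq 0$ for III$^+$, and $b=0$ with $c\neq 0$ for III$^-$, together with the corresponding starred conditions) collapse the formulas for $\theta_i,\theta^*_i$ to the last two rows of the table. This completes the verification. No step presents a genuine obstacle; the only thing to be careful about is consistently tracking which of $b,b^*,c,c^*$ vanishes in each subcase so as to read off the correct row.
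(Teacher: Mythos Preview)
Your proposal is correct and follows essentially the same approach as the paper: both proofs simply substitute the constraints from Lemma~\ref{lem:Qrelationq} and Definition~\ref{def:1602051} into the generic formulas of Lemma~\ref{lem:xiforms}, case by case. The paper's proof is a one-line citation of these results together with \eqref{thetasatisfy}; the latter (equivalently Note~\ref{note:2}) is what guarantees that in Case~III the subcases III$^+$ and III$^-$ are exhaustive, a point you rely on implicitly via the word ``partition'' in Definition~\ref{def:1602051} but might mention explicitly.
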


\begin{proof}
By Lemmas~\ref{lem:xiforms}, \ref{lem:Qrelationq}, Definition~\ref{def:1602051}, and (\ref{thetasatisfy}).
\end{proof}

We are done assuming $B=0$ and $B^*=0$. We now go in the other logical direction.

\begin{theorem}\label{thm:main4}
Assume  $\{\xi_i\}_{i=0}^{N-1}$, $\{\theta_i\}_{i=0}^N, \{\theta^*_i\}_{i=0}^N$ satisfy  one of the cases  {\rm I$^+$, I$^-$, I$^0$,
II$^+$, II$^-$, II$^0$, III$^+$, III$^-$}  from  Theorem~\ref{thm:main3}. Define $\beta, \gamma, \gamma^*, \varrho, \varrho^*$ using
Lemma~\ref{lem:parasatisfy}.
Then referring to {\rm(\ref{equa1}), (\ref{equa2})} we have  $B=0$ and $B^*=0$.
\end{theorem}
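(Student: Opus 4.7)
By Lemma~\ref{lem:fbf} it suffices to show $F_iBF_j=0$ and $F_iB^*F_j=0$ for all $0\le i,j\le N$, and by Lemma~\ref{lem:1602241} only the blocks listed in Lemmas~\ref{lem:btri} and \ref{lem:bstartri} can be nonzero. My first step is to check that in each of the eight cases of Theorem~\ref{thm:main3} the standard assumption of Definition~\ref{def:0427} holds. This is routine: in every case the sequences $\{\theta_i\}$, $\{\theta^*_i\}$ are of the form $a+bQ^i+cQ^{-i}$ with $Q\in\{q,q^{-1}\}$, so they are $\beta$-recurrent with $\beta=Q+Q^{-1}$, while items (iii)--(vi) of Definition~\ref{def:0427} are precisely the defining equations for the scalars $\gamma,\gamma^*,\varrho,\varrho^*$ supplied by Lemma~\ref{lem:parasatisfy}.

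Once the standard assumption is in hand, parts (i), (ii), (iii), (iv) of Lemma~\ref{lem:btri} (and of Lemma~\ref{lem:bstartri}) show that the blocks $F_{i+3}BF_i$, $F_{i+1}BF_i$, $F_iBF_i$, $F_{i-1}BF_i$ and their $B^*$ counterparts vanish automatically, since every coefficient in those tables is exactly one of the expressions that Definition~\ref{def:0427} forces to zero. The real content is in the blocks $F_{i+2}BF_i$ and $F_{i-2}B^*F_i$. For these I would use Lemma~\ref{lem:1602021} (resp.\ Lemma~\ref{lem:1602029}) together with the linear independence in Lemma~\ref{lem:rli0} to reduce the vanishing of $F_{i+2}BF_i$ on the range $1\le i\le N-3$ to the three identities of Lemma~\ref{lem:xisatisfy}, and handle the boundaries $i=0$ and $i=N-2$ separately via Lemmas~\ref{lem:1603032}, \ref{lem:rli31}, \ref{lem:20160227ss}; symmetrically, $i=2$ and $i=N$ for $B^*$ are covered by Lemma~\ref{lem:1603033}.

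Within the three identities of Lemma~\ref{lem:xisatisfy}, items (i) and (ii) involve only $\{\xi_i\}$ and can be checked by direct substitution: in each of the eight subcases $\xi_i$ takes the form $x+yQ^i+zQ^{-i}$ with exactly one of $y,z$ nonzero and $Q\in\{q,q^{-1}\}$, so these reduce to identities between Laurent polynomials in $q$. Item (iii) is the coupling identity involving $\heartsuit_i$: substituting the explicit values of $\theta_i,\theta^*_i$ from Theorem~\ref{thm:main3}, $\heartsuit_i$ expands into four terms indexed by $Q^i$, $Q^{-i}$, $(qQ^2)^i$, $(qQ^{-2})^i$, and the vanishing conditions $bb^*=0$ or $cc^*=0$ accompanying each subcase, combined with $Q\in\{q,q^{-1}\}$, collapse the troublesome $(qQ^{\pm 2})^i$ terms and leave exactly the monomial matching the $Q^{\pm i}$ part of $\xi_{i-1}-\xi_{i+2}$.

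The main obstacle is carrying out the verification of item (iii) in each of the eight subcases. The cases come in pairs under $q\leftrightarrow q^{-1}$ (Case I vs.\ Case II) and under the $B,B^*$ transpose symmetry $R\leftrightarrow L$, so after representative calculations the remaining subcases are formal transpositions; but tracking the precise combinations of $b,c,b^*,c^*$ together with the exponential factor $q^{-1-N-M-i}$ built into $\xi_i$ requires care, and the boundary identity Lemma~\ref{lem:20160227ss} at $i=0$ must be verified separately because it is not a consequence of items (i)--(iii) of Lemma~\ref{lem:xisatisfy} alone. In essence, the converse amounts to checking that the forward derivation leading to Theorem~\ref{thm:main3} is reversible step by step: each equation extracted from $B=B^*=0$ is in fact characterized by the explicit forms of the parameter sequences.
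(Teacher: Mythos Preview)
Your proposal is correct and follows essentially the same block-by-block strategy as the paper, which simply tabulates which of Lemmas~\ref{lem:1602241}, \ref{lem:btri}, \ref{lem:bstartri}, \ref{lem:1602021}, \ref{lem:1603032}, \ref{lem:1602029}, \ref{lem:1603033} kills each block $F_jBF_i$ (resp.\ $F_jB^*F_i$), leaving the verification that the listed coefficients vanish under the hypotheses implicit; you are spelling out that implicit step.

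One small correction: you invoke the linear-independence Lemmas~\ref{lem:rli0} and \ref{lem:rli311}, but these are only needed in the \emph{forward} direction (to infer that each individual coefficient vanishes from $F_{i+2}BF_i=0$). In the converse direction you want here, it is enough that every coefficient in the tables of Lemmas~\ref{lem:1602021} and \ref{lem:1602029} equals zero---then the weighted sum vanishes automatically, irrespective of whether the terms are independent. So you may drop those citations; the remaining verifications (the $\beta$-recurrence of $\xi_i$, the identity involving $\heartsuit_i$, and the boundary checks via Lemmas~\ref{lem:1603032}, \ref{lem:1603033}) are exactly what is required.
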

\begin{proof}
For $0\leq i,j\leq N$ the matrix $F_jBF_i=0$ for the following reason:

\medskip
\centerline{
\begin{tabular}[t]{l| l}
case & reason
   \\  \hline\hline
 $i-j>1$ & Lemma~\ref{lem:1602241} \\
 $i-j=1$ & Lemma~\ref{lem:btri}(ii)\\
 $i-j=0$&  Lemma~\ref{lem:btri}(iv)\\
 $i-j=-1$&  Lemma~\ref{lem:btri}(iii)\\
 $i-j=-2$ and $i=0$ &  Lemma~\ref{lem:1603032}\\
 $i-j=-2$ and $1\leq i\leq N-2$& Lemma~\ref{lem:1602021}\\
 $i-j=-3$ & Lemma~\ref{lem:btri}(i)\\
 $i-j<-3$& Lemma~\ref{lem:1602241}
   \\
 \end{tabular}}
\medskip
\noindent
Now $B=0$ by Lemma~\ref{lem:fbf}.

For $0\leq i,j\leq N$ the matrix $F_jB^*F_i=0$ for the following reason:

\medskip
\centerline{
\begin{tabular}[t]{l| l}
case & reason
   \\  \hline\hline
 $i-j<-1$ & Lemma~\ref{lem:1602241} \\
 $i-j=-1$ & Lemma~\ref{lem:bstartri}(ii)\\
 $i-j=0$&  Lemma~\ref{lem:bstartri}(iv)\\
 $i-j=1$&  Lemma~\ref{lem:bstartri}(iii)\\
 $i-j=2$ and $i=2$ &  Lemma~\ref{lem:1603033}\\
 $i-j=2$ and $3\leq i\leq N$& Lemma~\ref{lem:1602029}\\
 $i-j=3$ & Lemma~\ref{lem:bstartri}(i)\\
 $i-j>3$& Lemma~\ref{lem:1602241}
   \\
 \end{tabular}}
\medskip
\noindent
Now $B^*=0$ by Lemma~\ref{lem:fbf}.
\end{proof}


\section{Leonard pairs based on $\mathcal{A}_q(N, M)$}
In this section we obtain some
 Leonard pairs from $\mathcal{A}_q(N, M)$. Define $A, A^*$  as in (\ref{leonard1}), (\ref{leonard2}). Assume $\{\xi_i\}_{i=0}^{N-1}$, $\{\theta_i\}_{i=0}^N, \{\theta^*_i\}_{i=0}^N$
are from  Theorem~\ref{thm:main3}.
Let $W$ denote an irreducible $T$-module with endpoint $r$ and diameter $d$.

\begin{theorem}\label{thm:main6}
 The elements  $A, A^*$ act on $W$ as a Leonard pair if and only if
\begin{eqnarray}\label{1602161}
x\neq q^{r+d-N-M}(q-1)^2(bc^*+cb^*)q^{-i}\qquad\qquad (1\leq i\leq d).
\end{eqnarray}
\end{theorem}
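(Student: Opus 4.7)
My plan is to reduce the Leonard pair question on $W$ to the parameter array criterion in Lemma~\ref{thm:leoard equivalent} by realizing $A,A^*$ on $W$ as bidiagonal matrices in the basis of Lemma~\ref{lem:irredbasis}; the theorem will then come down to the nonvanishing of the scalars $\phi_j$ in the candidate parameter array.

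First I would fix the basis $\{w_i\}_{i=0}^d$ of $W$ from Lemma~\ref{lem:irredbasis}. Since $w_i\in F_{r+i}W$, combining Lemma~\ref{lem:irredbasis} with (\ref{leonard1}), (\ref{leonard2}) yields
\[
Aw_i=\theta_{r+i}w_i+\alpha_{r+i}w_{i+1},\qquad A^*w_i=\theta^*_{r+i}w_i+\alpha^*_{r+i}\,x_{r+i}(r,d)\,w_{i-1}.
\]
So $A$ is lower bidiagonal and $A^*$ is upper bidiagonal on $W$, and Lemma~\ref{thm:leoard equivalent} applies with candidate parameter array $(\{\theta_{r+i}\}_{i=0}^d,\{\theta^*_{r+i}\}_{i=0}^d,\{\varphi_j\}_{j=1}^d,\{\phi_j\}_{j=1}^d)$, where $\varphi_j=\xi_{r+j-1}\,x_{r+j}(r,d)$ and the $\phi_j$ are to be determined from Definition~\ref{def:nonzero}(iii), (iv).

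Next I would verify the parameter-array axioms of Definition~\ref{def:nonzero}. Condition (i) follows from (\ref{thetasatisfy}); condition (v) holds because the $\beta$-recurrences of $\{\theta_i\}, \{\theta^*_i\}$ from Lemma~\ref{lem:threerecurren} pass to any sub-range. I would define $\phi_1$ from the $j=1$ instance of (iii) and $\phi_j$ for $j>1$ via (iv); consistency of (iii) at higher $j$ is then handled using the $\beta$-recurrence of $\{\xi_i\}$ (Lemma~\ref{lem:threerecurren}) together with the explicit form of $x_{r+j}(r,d)$ from Definition~\ref{def:notation}. Nonvanishing $\varphi_j\neq 0$ in condition (ii) is automatic: $\xi_{r+j-1}\neq 0$ by (\ref{alph0}), (\ref{alph}), and $x_{r+j}(r,d)\neq 0$ by Note~\ref{note:3}. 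So everything reduces to showing $\phi_j\neq 0$ for $1\leq j\leq d$ if and only if (\ref{1602161}) holds.

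The central computation is the explicit evaluation of $\phi_j$ using Definition~\ref{def:nonzero}(iv) together with the forms of $\theta_i,\theta^*_i,\xi_i$ from Theorem~\ref{thm:main3}. In each subcase the ratio $\sum_{h=0}^{j-1}(\theta_{r+h}-\theta_{r+d-h})/(\theta_r-\theta_{r+d})$ telescopes uniformly to $(q^j-1)(q^{d-j+1}-1)/[(q-1)(q^d-1)]$. After substitution, the terms proportional to $bb^*$ (present only in Case~II) and to $cc^*$ (present only in Case~I), which enter both through $\xi_r$ inside $\varphi_1$ and through the product $(\theta^*_{r+j}-\theta^*_r)(\theta_{r+d-j+1}-\theta_r)$, cancel pairwise. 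What remains is
\[
\phi_j=(q^j-1)(q^{d-j+1}-1)\Bigl[\frac{x\,q^{N+M-r-d}}{(q-1)^2}-(bc^*+cb^*)q^{-j}\Bigr],
\]
except that in the cases where $\theta^*$ has the form $a^*+b^*q^i$ (Cases~II$^+$ and III$^-$) the factor $q^{-j}$ is replaced by $q^{j-d-1}$. Since $\{q^{-j}\}_{j=1}^d=\{q^{j-d-1}\}_{j=1}^d$ as sets, the condition $\phi_j\neq 0$ for all $1\leq j\leq d$ is equivalent to (\ref{1602161}) in every subcase, which gives the theorem.

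The main obstacle is the uniform bookkeeping across the eight subcases of Theorem~\ref{thm:main3}: in each one must identify which of $bb^*$ or $cc^*$ is nonzero and verify its cancellation in $\phi_j$, and then track which cross-term $bc^*$ or $cb^*$ survives---a survival forced by the subcase hypotheses $bb^*=0$ (Case~I), $cc^*=0$ (Case~II), or both (Case~III), together with the fact that one of $b,c$ and one of $b^*,c^*$ vanishes. Once the uniform telescoping and pairwise cancellation are in hand, the remaining verification is routine.
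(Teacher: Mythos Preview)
Your approach is essentially the same as the paper's: write $A,A^*$ in the basis of Lemma~\ref{lem:irredbasis}, apply Lemma~\ref{thm:leoard equivalent}, and reduce everything to the nonvanishing of the $\phi_j$. The paper carries out exactly this computation, tabulating $\phi_i$ case by case and then invoking the parameter-array criterion.

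One small slip: your identification of which subcases yield the exponent $q^{j-d-1}$ rather than $q^{-j}$ is off. In the paper's table the exponent $q^{-i}$ appears in Cases~I$^+$, II$^+$, III$^+$ and the exponent $q^{i-d-1}$ in Cases~I$^-$, II$^-$, III$^-$; in particular Case~II$^+$ has $\theta^*_i=a^*+b^*q^{-i}$ and gets $q^{-i}$, not $q^{j-d-1}$. This does not affect your conclusion, since you correctly observe that $\{q^{-j}\}_{j=1}^d=\{q^{j-d-1}\}_{j=1}^d$ as sets, so the nonvanishing condition (\ref{1602161}) is the same either way.
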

\begin{proof}
Recall the basis $\{w_i\}^d_{i=0}$ for $W$ in Lemma~\ref{lem:irredbasis}. With respect to this basis the matrix
representing $A$  is
$$
A:\qquad\left(\begin{array}{cccccc}
\theta_r&&&&&0\\
\alpha_r&\theta_{r+1}&&&&\\
&\alpha_{r+1}&\theta_{r+2}&&&\\
&&\cdots&\cdots&&\\
&&&\cdots&\cdots&\\
0&&&&\alpha_{r+d-1}&\theta_{r+d}
\end{array}\right),
$$
and  the matrix
representing  $A^*$  is
$$
A^*:\qquad\left(\begin{array}{cccccc}
\theta^*_r&\alpha^*_{r+1}x_{r+1}(r,d)&&&&0\\
&\theta^*_{r+1}&\alpha^*_{r+2}x_{r+2}(r,d)&&&\\
&&\theta^*_{r+2}&\cdots&&\\
&&&\cdots&\cdots&\\
&&&&\cdots&\alpha^*_{r+d}x_{r+d}(r,d)\\
0&&&&&\theta^*_{r+d}
\end{array}\right).
$$
Referring to Definition~\ref{def:notation}, set
$$
\varphi_i=\xi_{r+i-1}x_{r+i}(r,d)\qquad\qquad\qquad(1\leq i\leq d).
$$
Then $\varphi_i\neq 0$  for $1\leq i\leq d$ by Note~\ref{note:3} and (\ref{alph0}), (\ref{alph}).
Also, set
$$
\phi_i=\varphi_1\sum\limits_{h=0}^{i-1}\frac{\theta_{r+h}-\theta_{r+d-h}}{\theta_r-\theta_{r+d}}
+(\theta^*_{r+i}-\theta^*_r)(\theta_{r+d-i+1}-\theta_r)\qquad\qquad(1\leq i\leq d).
$$
Referring  to the cases from Theorem~\ref{thm:main3}, we have

\medskip
\centerline{
\begin{tabular}[t]{l|l}
{\rm Case} & $\qquad\qquad\qquad\qquad\phi_i$
   \\  \hline\hline
${\rm I^+}$ & $\Bigl(\frac{q^{N+M-r-d}}{(q-1)^2}x-bc^*q^{-i}\Bigr)(q^i-1)(q^{d-i+1}-1)$
 \\
${\rm I^-}$ & $\Bigl(\frac{q^{N+M-r-d}}{(q-1)^2}x-cb^*q^{i-d-1}\Bigr)(q^i-1)(q^{d-i+1}-1)$
 \\
${\rm I^0}$ & $q^{N+M-r-d}(q-1)^{-2}(q^i-1)(q^{d-i+1}-1)x$
 \\\hline
${\rm II^+}$ & $\Bigl(\frac{q^{N+M-r-d}}{(q-1)^2}x-cb^*q^{-i}\Bigr)(q^i-1)(q^{d-i+1}-1)$
 \\
${\rm II^-}$ & $\Bigl(\frac{q^{N+M-r-d}}{(q-1)^2}x-bc^*q^{i-d-1}\Bigr)(q^i-1)(q^{d-i+1}-1)$
\\
${\rm II^0}$ & $q^{N+M-r-d}(q-1)^{-2}(q^i-1)(q^{d-i+1}-1)x$
\\\hline
${\rm III^+}$ & $\Bigl(\frac{q^{N+M-r-d}}{(q-1)^2}x-bc^*q^{-i}\Bigr)(q^i-1)(q^{d-i+1}-1)$
\\
${\rm III^-}$ & $\Bigl(\frac{q^{N+M-r-d}}{(q-1)^2}x-cb^*q^{i-d-1}\Bigr)(q^i-1)(q^{d-i+1}-1)$
     \end{tabular}}
\noindent
Using (\ref{1602161}) we find that in each case,  $\phi_i\neq 0$  for $1\leq i\leq d$.

In each case,
it is routine to check using Definition~\ref{def:nonzero} that
$(\{\theta_i\}_{i=0}^d, \{\theta^*_i\}_{i=0}^d,
 \{\varphi_j\}^d_{j=1}, \{\phi_j\}^d_{j=1})$
is a parameter array. By Lemma~\ref{thm:leoard equivalent}, the pair $A, A^*$ acts on $W$ as a Leonard pair.
\end{proof}

\section{Directions for further research}
In this section we mention some open problems.

\begin{problem}{\rm
Describe  $\Phi, \Omega$ in terms of $C_1, C_2$.}
\end{problem}

\begin{problem}{\rm
Describe  $\Phi, \Omega$ in terms of $R, L, K, K^{-1}$.}
\end{problem}

\begin{problem}{\rm
Find a combinatorial interpretation of  $\Phi, \Omega$.}
\end{problem}




\section{Acknowledgments}
The paper was written while the author was an honorary fellow at the University of Wisconsin-Madison, July 2015--July 2016. The  author would like to give her sincere thanks to
 professor Paul Terwilliger
for his guidance and   valuable
ideas.
The author is partially supported by NSF of China (11301138, 11271004),  NSF of Hebei Province (A2014205105)
and  Research Fund for the Doctoral Program of Hebei Normal University (L2015B02).

\noindent Wen Liu \hfil\break
\noindent College of Mathematics and Information \hfil\break
\noindent Hebei Normal University \hfil\break
\noindent No. 20 Road East of 2nd Ring South \hfil\break
\noindent Shijiazhuang Hebei, 050024, China \hfil\break
\noindent email: {\tt liuwen1975@126.com }\hfil\break


\begin{thebibliography}{10}



\bibitem{alnajjar}
H.~Alnajjar.
\newblock
Leonard pairs associated with the equitable generators of the quantum algebra $U_q({\mathfrak{sl}}_2)$.
\newblock {\em
Linear and Multilinear Algebra}
59 (2012) 1127--1142.
\bibitem{Bonoli}
G.~Bonoli, N.~Melone.
\newblock
A characterization of Grassmann and attenuated spaces as $(0, \alpha)$-geometries.
\newblock{\em
European J. Combin.}
24 (2003) 489--498.


\bibitem{Curtis}
C. Curtis, I. Reiner.
\newblock
Representation Theory of Finite Groups ans Associative Algebras.
\newblock {\em
Interscience, New York} 1962.


\bibitem{gaogaohou}
X.~Gao, S.~Gao, B.~Hou.
\newblock
The Terwilliger algebras of Grassmann graphs.
\newblock {\em
Linear Algebra Appl.}
471 (2015) 427--448.

\bibitem{gaozhanghou}
S.~Gao, L.~Zhang, B.~Hou.
\newblock
The Terwilliger algebras of Johnson graphs.
\newblock {\em
Linear Algebra Appl.}
 443 (2014) 164--183.

 \bibitem{gaowang}
Y.~Gao, G.~Wang.
\newblock
Error-correcting codes in attenuated space over finite fields.
\newblock {\em
Finite Fields  Appl.}
 33 (2015) 103--117.


\bibitem{guoliwang}
J.~Guo, F.~Li, K.~Wang.
\newblock
Incidence matrices of finite attenuated spaces and class dimension of association schemes.
\newblock {\em Discrete Math.} 315--316 (2014) 42--46.

\bibitem{itotanater}
T.~Ito, K.~Tanabe, P.~Terwilliger.
\newblock Some algebra related to ${P}$- and ${Q}$-polynomial association
schemes (Piscataway, NJ, 1999),
 \newblock {\em  DIMACS Ser. Discrete Math. Theoret. Comput. Sci.,} Vol. 56, Amer.
  Math. Soc., Providence, RI, 2001,
        167--192, math.CO/0406556.


\bibitem{itoter2} T.~Ito, P.~Terwilliger.
\newblock
Distance-regular graphs of $q$-Racah
type and the $q$-tetrahedron algebra.
\newblock {\em Michigan Math. J.}
58(1) (2009) 241--254.
arXiv: 0708.1992vl.

\bibitem{itoter1}
T.~Ito, P.~Terwilliger.
\newblock
 Distance-regular graphs and the $q$-tetrahedron algebra.
\newblock {\em
European J. Combin.}
30 (2009) 682--697.


\bibitem{nakang}
N.~Kang, L.~Chen.
\newblock
Uniform posets and Leonard pairs based on unitary spaces over finite fields.
\newblock {\em
Linear and Multilinear Algebra}
64 (2016) 1163--1184.

\bibitem{kassel}
C.~Kassel.
\newblock
Quantum Groups.
\newblock {\em
springer-Verlag, New York} 1995.



\bibitem{koekoek}
R.~Koekoek, R.~Swarttouw.
\newblock
The Askey-scheme of Hypergeometric Orthogonal Polyomials and
its q-analog.
\newblock
 {\em Reports of the Faculty of Technical Mathematics and Informatics, vol. 98--17,
Delft, Netherlands} 1998.





\bibitem{Kurihara}
H.~Kurihara.
\newblock
 Character tables of association schemes based on attenuated spaces.
 \newblock
{\em
Ann. Comb.} 17 (2013) 525--541.

\bibitem{lee}
Jae-Ho Lee.
\newblock
$Q$-polynomial distance-regular graphs and a double affine Hecke algebra of rank one.
\newblock{\em Linear Algebra Appl.}  439 (2013) 3184--3240.




\bibitem{leonard}
D.~Leonard.
\newblock
Orthogonal polynomials, duality, and association schemes.
\newblock
{\em SIAM J. Math. Anal.}
13 (1982) 656--663.




\bibitem{LW1}
W.~Liu, K.~Wang.
\newblock
Relation graphs of an association scheme  based on attenuated spaces.
\newblock {\em
J.  Algebraic Combin.} 40 (2014) 973--982.


\bibitem{miklavic}
S.~Miklavi$\check{c}$, P.~Terwilliger.
\newblock
Bipartite $Q$-polynomial distance-regular graphs and uniform posets.
\newblock {\em
J.  Algebraic Combin.}
38 (2013) 225--242.
arXiv: 1108.2484vl.



\bibitem{ter1}
P.~Terwilliger.
\newblock
The subconstituent algebra of an association scheme I.
\newblock {\em
J. Algebraic
Combin.}
1 (1992) 363--388.

\bibitem{ter2}
P.~Terwilliger.
\newblock
The subconstituent algebra of an association scheme II.
\newblock {\em
J. Algebraic
Combin.}
2 (1993) 73--103.

\bibitem{ter3}
P.~Terwilliger.
\newblock
The subconstituent algebra of an association scheme III.
\newblock {\em
J. Algebraic
Combin.}
2 (1993) 177--210.


\bibitem{ter5}
P.~Terwilliger.
\newblock
The incidence algebra of a uniform poset.
\newblock {\em
Math. Appl.}
20 (1990) 193--212.

\bibitem{ter6}
P.~Terwilliger.
\newblock
An introcution to Leonard pairs and Leonard systems.
\newblock {\em
 S$\bar{u}$rikaisekiken-\\ky$\bar{u}$sho K$\bar{o}$ky$\bar{u}$roku}
 1109 (1999) 67--79 (Algebraic combinatorics, Kyoto, 1999.)

\bibitem{ter7+}
 P.~Terwilliger.
\newblock
Two linear transformations each tridiagonal with respect to an eigenbasis of the other.
\newblock {\em
Linear Algebra Appl.}
330 (2001) 149--203.

\bibitem{ter20151231}
 P.~Terwilliger.
\newblock
Introduction to Leonard pairs.
\newblock
OPSFA Rome 2001.
\newblock {\em
J. Comput. Appl. Math.}
153(2) (2003) 463--475.


\bibitem{ter7++}
 P.~Terwilliger.
\newblock
Two linear transformations each tridiagonal with respect to an eigenbasis of the other; an algebraic
approach to the Askey scheme of orthogonal polynomials.
\newblock{\em Orthogonal polynomials and special functions},  255--330, Lecture Notes in Math., 1883, {\em Springer, Berlin} 2006.
 arxiv: math. QA/0408390v3.

\bibitem{ter9}
P.~Terwilliger.
\newblock
Two linear transformations each tridiagonal with respect to an eigenbasis of the other; the $TD$--$D$ and the $LB$--$UB$ canonical form.
\newblock {\em
J. Algebra}
291 (2005) 1--45.



\bibitem{ter8}
P.~Terwilliger.
\newblock
Two linear transformations each tridiagonal with respect to an eigenvalues of the other; Comments on the split decomposition.
\newblock {\em
J.  Comput.  Appl. Math.}
 178 (2005) 437--452.

\bibitem{ter16041}
P.~Terwilliger.
\newblock
The universal Askey-Wilson algebra.
\newblock {\em
SIGMA}
 7 (2011) 069, 24 pages.

\bibitem{ter16042}
P.~Terwilliger.
\newblock
The universal Askey-Wilson algebra and the equitable presentation of $U_q({\mathfrak{sl}}_2)$.
\newblock {\em
SIGMA}
 7 (2011) 099, 26 pages.


\bibitem{terzit}
P.~Terwilliger, A.~$\check{Z}$itnik.
\newblock
Distance-regular graphs of $q$-Racah type and the universal Askey-Wilson algebra.
\newblock {\em
J.  Combin. Theory, Ser. A}
 125 (2014) 98--112.

\bibitem{tervid}
P.~Terwilliger, R. Vidunas.
\newblock
Leonard pairs and the Askey-Wilson relations.
\newblock {\em
J. Algebra Appl.}
3 (2004) 411--426. arxiv: math. QA/0305356.


\bibitem{terwora}
P.~Terwilliger, C.~Worawannotai.
\newblock
Augmented down-up algebras and
uniform posets.
\newblock {\em
Ars Mathematica Contemporanea}
6 (2013) 409--417.

\bibitem{tomiyama}
M.~Tomiyama, N.~Yamazaki.
\newblock
The subconstituent algebra of a strongly regular graph.
\newblock {\em
Kyushu J. Math.}
48 (1994) 323--334.

\bibitem{wgl1}
K.~Wang, J.~Guo, F.~Li.
\newblock
 Association schemes based on attenuated space.
 \newblock {\em
  European J. Combin.} 31 (2010) 297--305.

\bibitem{woraw}
C. Worawannotai.
\newblock
Dual polar graphs, the quantum algebra $U_q({\mathfrak{sl}}_2)$, and Leonard systems of dual $q$-Krawtchouk type.
\newblock {\em
Linear Algebra Appl.}
438 (2013) 443--497.





 \end{thebibliography}
\end{document}